\title{Torsors of the Jacobians of the universal Fermat curves}
\newtheorem{theorem}{Theorem}
\newtheorem{lemma}{Lemma}
\newtheorem{proposition}{Proposition}
\newtheorem{question}{Question}
\definecolor{yellow}{rgb}{0.99,0.99,0.80}
\begin{document}

\author{Qixiao MA}
\address{Insitute of Mathematical Sciences, ShanghaiTech University, 201210, Shanghai, China (Current)}
\email{maqx1@shanghaitech.edu.cn}
\address{Institute of algebraic geometry, Leibniz University Hannover, Welfengarten 1, 30167 Hannover, Germany (Former)}
\email{qixiaoma@math.uni-hannover.de}

\begin{abstract}
Let $m\geq3$ be an integer. 
We show that every torsor of the Jacobian of the universal family of degree-$m$ Fermat curve is necessarily a connected component of the Picard scheme. We show that the Jacobian of the generic degree-$m$ Fermat curve has uncountably many non-isomorphic torsors. We give some results towards the Franchetta type problem for torsors of the Jacobian of the universal family of genus-$g$ curves over $\mathcal{M}_g$.

\end{abstract}
\maketitle
\setcounter{tocdepth}{1}

\tableofcontents
\section{Introduction}
\subsection{Franchetta type questions}
Let $g\geq3$ be an integer, so that the moduli stack $\mathcal{M}_g$ of smooth genus $g$ curves is generically an integral scheme. Let $C/K$ be the generic fiber of the universal family $\pi\colon \mathcal{M}_{g,1}\to \mathcal{M}_{g}$. The classical Franchetta theorem, originally posed as a conjecture in \cite{zbMATH03091363}, asserts that every line bundle on $C$ is necessarily a tensor power of the canonical line bundle: $$\mathrm{Pic}(C)=\mathbb{Z}\cdot[\omega_C],$$
see \cite{Schroeer-Strong-Franchetta} for the history of this problem and 
proofs of strengthened results in arbitrary characteristics. 

While the classical Franchetta conjecture has been fully settled, the observation that ``there are only canonical constructions on generic objects'' motivated interesting problems in arithmetic geometry, most notably O'Grady's question whether zero cycles on generic polarized K3 surfaces are necessarily multiples of the Beauville-Voisin class \cite{zbMATH06448861}. For evidences of this conjecture, see \cite{zbMATH07004433}, \cite{zbMATH07495474}, \cite{zbMATH07690544} and \cite{zbMATH07577489}, etc.

\subsection{Motivation and summary}In this paper, instead of Chow group of zero cycles, we explore Franchetta type question for the Weil-Ch\^atelet group: the group of torsors of abelian schemes. We would like to determine if every torsor of the Jacobian of the universal genus $g$ curve is necessarily canonical, i.e., a connected components of the Picard scheme. 

While we do not solve this problem over $\mathcal{M}_g$, we fully solve this problem for the universal family of Fermat curves: In Theorem \ref{thm1}, we show that all torsors of the Jacobian of the universal family of smooth Fermat curves are canonical.
However, over the generic point, we show that there are uncountably many non-canonical torsors of the generic Fermat curves, see Theorem \ref{thm2}.
Then we use our methods to get
 partial results over $\mathcal{M}_g$.

\subsection{Relation with other works}
We point out relation of our work with some other recent works.
\begin{enumerate}[leftmargin=*]
\item In \cite{colliotthélène2023lowdegreeunramifiedcohomology}, Colliot-Th\'el\`ene
and Skorobogatov showed that the $i$-th unramified cohomology group of the generic diagonal hypersurface in the projective space of dimension $n\geq i +1$ is trivial for $i=1,2,3$.  
For the generic degree-$m$ Fermat curve $X/K$, Leray spectral sequence and Artin vanishing identifies $\mathrm{H}^1(K,\mathrm{Pic}^0_{X/K})$ with a finite extension of $\mathrm{Br}(X)/\mathrm{Br}(K)$. So our result on uncountability of this group illustrates that the vanishing result in \cite[Theorem 4.8]{colliotthélène2023lowdegreeunramifiedcohomology} is optimal for $i=2$. 
\item 
In \cite{Pirisi-Brauer-genus-3}, \cite{Pirisi-Brauer-Group-Universal}
the authors calculated Brauer group of moduli stack of curves, our Franchetta type question for torsors of Jacobian over $\mathcal{M}_g$ provides some possible applications of their work.
\end{enumerate}
\subsection{Acknowledgements}
The author thank Professor Colliot-Th\'el\`ene for helpful discussion on this topic when the author was a postdoc at Leibniz University Hannover, where the project began. The author thank Lian Duan and Yingdi Qin for important remarks which he overlooked. The author thank Shizhang Li, Shuai Wang and Dingxin Zhang for helpful conversations on calculating of cohomology of monodromy.

\section{Set ups and main results}
Throughout we work over $\mathbb{C}$. Let $\mathbb{P}^2_{[a_0:a_1:a_2]}\times\mathbb{P}^2_{[X_0:X_1:X_2]}$ be the product projective space, with bi-homogeneous coordinates $([a_0:a_1:a_2],[X_0:X_1:X_2])$. Let $m\geq3$ be an integer.

\subsection{The universal family of degree-\texorpdfstring{$m$}{m} Fermat curves} We call the hypersurface $$\mathcal{X}:=V(a_0X_0^m+a_1X_1^m+a_2X_2^m)\subset\mathbb{P}^2_{[a_0:a_1:a_2]}\times\mathbb{P}^2_{[X_0:X_1:X_2]}$$ the universal family of degree-$m$ Fermat curves. Let $\mathrm{pr}_1,\mathrm{pr}_2$ be the projections onto the factors:  $$\mathrm{pr}_1:\mathcal{X}\to \mathbb{P}^2_{[a_0:a_1:a_2]}, \ \ \mathrm{pr}_2:\mathcal{X}\to \mathbb{P}^2_{[X_0:X_1:X_2]}.$$ Note that: 
\begin{enumerate}
\item The fibers of $\mathrm{pr}_1\colon\mathcal{X}\to\mathbb{P}^2_{[a_0:a_1:a_2]}$ are plane curves of degree $m$.
\item The total space $\mathcal{X}$ is a  $\mathbb{P}^1$-bundle over $\mathbb{P}^2$ via $\mathrm{pr}_2$, more precisely, $\mathcal{X}\cong\mathbb{P}(\mathrm{pr}_{2,*}\mathrm{pr}_1^*\mathcal{O}_{\mathbb{P}^2}(1)).$
\end{enumerate}
Let $U=\mathbb{P}^2_{[a_0:a_1:a_2]}-V(a_0a_1a_2)$ be the complement of coordinate axes.
Let us call the base change $(\mathrm{pr}_1)_U\colon \mathcal{X}_U\to U$ the universal family of smooth degree-$m$ Fermat curves.

\subsection{The Picard scheme and its canonical torsors}
Let $\mathrm{Pic}^d_{\mathcal{X}_U/U}$ be the degree-$d$ component of the relative Picard scheme. The connected component $\mathrm{Pic}^0_{\mathcal{X}_U/U}$ is an abelian scheme over $U$ and is canonically isomorphic to its dual abelian scheme, therefore we also call $\mathrm{Pic}^0_{\mathcal{X}_U/U}$ the Jacobian of $\mathcal{X}_U/U$.

The components $\{\mathrm{Pic}_{\mathcal{X}_U/U}^d\}_{d\in\mathbb{Z}}$ are \'etale torsors under $\mathrm{Pic}^0_{\mathcal{X}_U/U}$, we call them the canonical torsors of $\mathrm{Pic}^0_{\mathcal{X}_U/U}$. The group of all \'etale torsors of $\mathrm{Pic}^0_{\mathcal{X}_U/U}$ are classified by $\mathrm{H}^1_{\textrm{et}}(U,\mathrm{Pic}^0_{\mathcal{X}_U/U})$. It contains the cyclic subgroup  generated by the class of $\mathrm{Pic}^1_{\mathcal{X}_U/U}$. 
Since $\mathrm{Pic}^d_{\mathcal{X}_U/U}$ can be identified with $\mathrm{Pic}^{d+m}_{\mathcal{X}_U/U}$ by tensoring with $\mathrm{pr}_2^*\mathcal{O}(1)$, we have $m[\mathrm{Pic}^1_{\mathcal{X}_U/U}]=(m+d)[\mathrm{Pic}^1_{\mathcal{X}_U/U}]$ and therefore the cyclic group $\langle[\mathrm{Pic}^1_{\mathcal{X}_U/U}]\rangle$ is torsion with order divisible by $m$.



\subsection{Posing Franchetta type questions}



When we pose the Franchetta question for line bundles, it doesn't matter if we consider the universal family or its generic fiber. However when we pose the question for torsors, there is subtlety. We explain the difference:

Let $\pi\colon\mathcal{C}\to U$ be a family of smooth curves over a smooth base, and let $C/K$ be the generic fiber
\begin{enumerate}
\item Any line bundle on the generic fiber of $\pi$ necessarily extends to a line bundle on $\mathcal{C}$ by taking closure of a corresponding Weil divisor. Closure of linearly equivalent Weil divisors may not be linearly equivalent on $\mathcal{C}$, the difference is measured by pullbacks of line bundles on $U$, which is in general fairly understood:$$\xymatrix{\mathrm{Pic}(U)\ar[r]^-{\pi^*}&\mathrm{Pic}(\mathcal{C})\ar[r]&\mathrm{Pic}(C)\ar[r]&0}.$$
\item However, let $T/K$ be a torsor of $\mathrm{Pic}^0_{C/K}$, there is no apparent way to extend it to a torsor $\mathcal{T}/U$ of $\mathrm{Pic}^0_{\mathcal{C}/U}$. 
\end{enumerate}
Therefore we need to specify if we pose the Franchetta type question for the universal family or for the generic fiber of the universal family. This paper will show that the two set ups give quite different results.

\subsection{A Franchetta type result over \texorpdfstring{$U$}{U}}
We show that all torsors of the Jacobian of the universal family of smooth degree-$m$ Fermat curves are  necessarily isomorphic to $\mathrm{Pic}^d_{\mathcal{X}_U/U}$ 
We also determine the period of the generator $[\mathrm{Pic}^1_{\mathcal{X}_U/U}]\in\mathrm{H}^1_{\mathrm{et}}(U,\mathrm{Pic}^0_{\mathcal{X}_U/U})$.
\begin{theorem}\label{thm1}
The group of $\mathrm{Pic}_{\mathcal{X}_U/U}^0$-torsors over $U$ is a cyclic group generated by the class of $\mathrm{Pic}^1_{\mathcal{X}_U/U}$. The class $[\mathrm{Pic}^1_{\mathcal{X}_U/U}]$ has period $m$ when $m$ is odd and period $m/2$ when $m$ is even: 
$$\mathrm{H}^1_{\mathrm{et}}(U,\mathrm{Pic}^0_{\mathcal{X}_U/U})=\langle[\mathrm{Pic}^0_{\mathcal{X}_U/U}]\rangle\cong 
    \left\{ 
        \begin{array}{lc} 
            {\mathbb{Z}}/{m\mathbb{Z}},& 2\nmid m\\
            {\mathbb{Z}}/{\left(\frac{1}{2}m\right)\mathbb{Z}}, & 2\mid m
        \end{array}
    \right.$$ 
\end{theorem}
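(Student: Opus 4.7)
The plan is to use the Leray spectral sequence for $\mathrm{pr}_1\colon \mathcal{X}_U \to U$ applied to $\mathbb{G}_m$, together with the degree short exact sequence $0 \to \mathrm{Pic}^0_{\mathcal{X}_U/U} \to \mathrm{Pic}_{\mathcal{X}_U/U} \to \underline{\mathbb{Z}} \to 0$ on the \'etale site of $U$. Since the geometric fibers of $\mathrm{pr}_1$ are smooth projective curves over algebraically closed fields, $R^i \mathrm{pr}_{1,*} \mathbb{G}_m = 0$ for $i \geq 2$, so the Leray spectral sequence collapses to the exact sequence
\[
0 \to \mathrm{Pic}(U) \to \mathrm{Pic}(\mathcal{X}_U) \to H^0(U, \mathrm{Pic}_{\mathcal{X}_U/U}) \to \mathrm{Br}(U) \to \mathrm{Br}(\mathcal{X}_U) \to H^1(U, \mathrm{Pic}_{\mathcal{X}_U/U}) \to H^3_{\mathrm{et}}(U, \mathbb{G}_m).
\]
Taking cohomology of the degree sequence, and using $H^1_{\mathrm{et}}(U, \underline{\mathbb{Z}}) = 0$ (since $\pi_1^{\mathrm{et}}(U)$ is profinite), one obtains
\[
0 \to \mathbb{Z}/d\mathbb{Z} \to H^1(U, \mathrm{Pic}^0_{\mathcal{X}_U/U}) \to H^1(U, \mathrm{Pic}_{\mathcal{X}_U/U}) \to 0,
\]
where $d\mathbb{Z}$ is the image of the degree map on $H^0(U, \mathrm{Pic}_{\mathcal{X}_U/U})$ and the cyclic subgroup $\mathbb{Z}/d\mathbb{Z}$ is generated by $[\mathrm{Pic}^1_{\mathcal{X}_U/U}]$. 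The theorem is thus equivalent to showing $d = m$ (resp.\ $d = m/2$) for $m$ odd (resp.\ even) and $H^1(U, \mathrm{Pic}_{\mathcal{X}_U/U}) = 0$.

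To compute the ingredients: $U \cong \mathbb{G}_m^2$ gives $\mathrm{Pic}(U) = 0$, $\mathrm{Br}(U) \cong \mathbb{Q}/\mathbb{Z}$ (with $\mathrm{Br}(U)[n]$ cyclic of order $n$, generated by a Hilbert symbol in the coordinates $a_1/a_0, a_2/a_0$), and $H^3_{\mathrm{et}}(U, \mathbb{G}_m) = 0$. The $\mathbb{P}^1$-bundle $\mathrm{pr}_2\colon \mathcal{X} \to \mathbb{P}^2_{[X_0:X_1:X_2]}$ yields $\mathrm{Pic}(\mathcal{X}) = \mathbb{Z} H_1 \oplus \mathbb{Z} H_2$ and $\mathrm{Br}(\mathcal{X}) = \mathrm{Br}(\mathbb{P}^2) = 0$. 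The boundary $\mathcal{X} \setminus \mathcal{X}_U = D_0 + D_1 + D_2$ with $D_i = \mathrm{pr}_1^{-1}(V(a_i))$ is an SNC divisor, and each $D_i$ is isomorphic to the blow-up of $\mathbb{P}^2_{[X_0:X_1:X_2]}$ at one coordinate point, hence simply connected. Excision then gives $\mathrm{Pic}(\mathcal{X}_U) = \mathbb{Z} \cdot [H_2|_{\mathcal{X}_U}]$ with fiberwise degree $m$, and the Gysin sequence realizes $\mathrm{Br}(\mathcal{X}_U)$ as a subgroup of $\bigoplus_i H^1_{\mathrm{et}}(D_i^{\circ}, \mathbb{Q}/\mathbb{Z})$, where $D_i^{\circ} = D_i \setminus \bigcup_{j \neq i} D_j$.

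Combining everything, $d$ equals the order of the obstruction class $b \in \mathrm{Br}(U)[m]$ classifying the torsor $\mathrm{Pic}^1_{\mathcal{X}_U/U}$ modulo the kernel of the pullback $\mathrm{Br}(U) \to \mathrm{Br}(\mathcal{X}_U)$, while $H^1(U, \mathrm{Pic}_{\mathcal{X}_U/U})$ is its cokernel. The hard part of the proof is the explicit residue computation identifying $b$: I expect $b$ to coincide, up to a unit, with $2$ times the canonical generator of $\mathrm{Br}(U)[m] \cong \mathbb{Z}/m$, so that its order is $m/\gcd(m,2)$, which is $m$ for odd $m$ and $m/2$ for even $m$. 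The vanishing $H^1(U, \mathrm{Pic}_{\mathcal{X}_U/U}) = 0$ should then follow from showing that $\mathrm{Br}(U) \to \mathrm{Br}(\mathcal{X}_U)$ is surjective, by exhibiting enough pulled-back Brauer classes on $\mathcal{X}_U$ via the geometry of $\mathrm{pr}_2$ and the SNC boundary. The parity-dependent factor of $2$ (morally, a ``half-hyperplane'' or theta-characteristic-like refinement of the even Fermat equation that becomes effective only for $m$ even) is the central subtle point of the calculation.
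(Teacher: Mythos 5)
Your overall framework does match the paper's: the Leray spectral sequence for $\mathbb{G}_m$ along $(\mathrm{pr}_1)_U$, the degree sequence with $\mathrm{H}^1_{\mathrm{et}}(U,\underline{\mathbb{Z}})=0$, and the reduction to two statements — surjectivity of $\mathrm{Br}(U)\to\mathrm{Br}(\mathcal{X}_U)$ (giving $\mathrm{H}^1(U,\mathrm{Pic}_{\mathcal{X}_U/U})=0$) and the determination of $\mathrm{Im}(\deg_U)$. Your computations of $\mathrm{Pic}(U)$, $\mathrm{Br}(U)$, $\mathrm{H}^3_{\mathrm{et}}(U,\mathbb{G}_m)$, $\mathrm{Pic}(\mathcal{X}_U)$, and the plan to control $\mathrm{Br}(\mathcal{X}_U)$ by residue sequences along the boundary are consistent with Lemma \ref{surjection} of the paper, modulo one inaccuracy: $D_i$ is not the blow-up of $\mathbb{P}^2$ at a reduced point (it is singular along $F_i\cong\mathbb{P}^1$), and what matters for the Brauer comparison is not simple connectivity of $D_i$ but rather that $\mathrm{H}^1_{\mathrm{et}}(D_i^\circ,\mathbb{Q}_l/\mathbb{Z}_l)\cong\mathbb{Q}_l/\mathbb{Z}_l\neq 0$, compared with $\mathrm{H}^1_{\mathrm{et}}(L_i^\circ,\mathbb{Q}_l/\mathbb{Z}_l)$ through a multiplication-by-$m$ map.

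The genuine gap is the period computation, which is the heart of the theorem. Your claim that $d$ equals the order of an ``obstruction class $b\in\mathrm{Br}(U)[m]$ classifying $\mathrm{Pic}^1_{\mathcal{X}_U/U}$'' is neither well defined nor correct: when $d>1$ there is no section of degree $1$, so $\mathrm{Pic}^1$ has no obstruction class under the Leray map $\mathrm{d}_2^{0,1}\colon\mathrm{Pic}_{\mathcal{X}_U/U}(U)\to\mathrm{Br}(U)$; and the image of $[\mathrm{Pic}^1]$ in $\mathrm{H}^1(U,\mathrm{Pic}_{\mathcal{X}_U/U})\cong\mathrm{Br}(\mathcal{X}_U)/\mathrm{Br}(U)$ is zero, by exactness of the sequence in Lemma \ref{lem-conn-components}, so no single Brauer class attached to $\mathrm{Pic}^1$ measures the period. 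What is true is that $d=\#\,\mathrm{Pic}^0_{\mathcal{X}_U/U}(U)$: the group $\mathrm{Pic}^0_{\mathcal{X}_U/U}(U)$ injects via $\mathrm{d}_2^{0,1}$ into $\ker\left(\mathrm{Br}(U)\to\mathrm{Br}(\mathcal{X}_U)\right)\cong\mathbb{Z}/m\mathbb{Z}$, and an index count (Proposition \ref{image-deg}) converts its order into $\mathrm{Im}(\deg_U)$. So the real task is to compute the global sections of the Jacobian, i.e. the monodromy invariants $\mathrm{Pic}^0_{\mathcal{X}_U/U}[m](U)=\mathrm{H}^0(U,R^1(\mathrm{pr}_1)_{U,*}\mu_m)$. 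This is where the specific geometry of Fermat curves enters, and it is entirely absent from your proposal: the family is isotrivial, trivialized over the $\mu_m^{\oplus2}$-cover $\widetilde{U}\to U$, so the invariants are those of the explicit $\mu_m\times\mu_m$-action on $\mathrm{H}_1(F_m,\mathbb{Z}/m\mathbb{Z})$; using the cyclic-module description of this homology and a matrix computation, the invariants come out to $\mu_m$ for odd $m$ and $\mu_{m/2}$ for even $m$, the parity entering through the relation $\frac{m(m-1)}{2}d\equiv 0\pmod m$ (Lemma \ref{invariants}). Your proposal defers exactly this step (``the hard part,'' ``I expect $b$ to coincide, up to a unit, with $2$ times the canonical generator'') to an unperformed residue computation with only a guessed answer, so the central claim — period $m$ versus $m/2$ — is not proved by your argument.
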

Running Leray spectral sequence, the problem is reduced to calculation of the Brauer group of the universal family, and the set of regular sections $\mathrm{Pic}_{\mathcal{X}_U/U}(U)$ of the relative Picard scheme. We calculate the Brauer group via standard ramification sequence, which is doable because the singular stratum of the universal family of Fermat curves is rather clear. The calculation of Brauer group reduces the rest of the problem to determining of sections of $\mathrm{Pic}^0_{\mathcal{X}_U/U}[m]$. With the help of \cite{Galois-action-homology-MR3596577}, we further reduce it to a linear algebra calculation.

It is fun to note that for even degree Fermat curves, there are rational points on $\mathrm{Pic}^{m/2}_{\mathcal{X}_U/U}$, which can be thought of as ``theta characteristics'' of $\mathrm{pr}_2^*\mathcal{O}(1)$.

\subsection{Non-canonical torsors over the generic point}
We show that the analogous Franchetta type result no longer holds over the generic point of $U$. 

Let $K=\mathrm{f.f.}(U)$ be the field of fractions of $U$, and let $X/K$ be the generic fiber of $\mathcal{X}_U/ U$. Then $\{\mathrm{Pic}^d_{X/K}\}_{d\in\mathbb{Z}}$ are torsors under $\mathrm{Pic}^0_{X/K}$. Since every rational section in abelian scheme over a smooth base necessarily extends to a regular section, the subgroup of canonical torsors $\langle[\mathrm{Pic}^1_{X/K}]\rangle\subseteq\mathrm{H}_{\mathrm{et}}^1(K,\mathrm{Pic}^0_{X/K})$ has the same order as over $U$. However, we show that there are uncountably many non-canonical torsors.

\begin{theorem}\label{thm2}
    The group of $\mathrm{Pic}^0_{X/K}$-torsors over $K$ is uncountable:
$$\mathrm{card}\left(\mathrm{H}^1_{\mathrm{et}}(K,\mathrm{Pic}^0_{X/K})\right)>\aleph_0$$
\end{theorem}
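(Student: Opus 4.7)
The plan is to fix a prime $p$ dividing $m$ and apply the Kummer sequence for multiplication by $p$ on $J := \mathrm{Pic}^0_{X/K}$,
\[
J(K)/pJ(K) \hookrightarrow H^1(K, J[p]) \twoheadrightarrow H^1(K, J)[p] \to 0,
\]
reducing the theorem to the two separate claims that $J(K)/pJ(K)$ is finite and that $H^1(K, J[p])$ is uncountable.

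For the first claim I would exploit the isotriviality of the family. The rescaling $X_i \mapsto (a_i/a_0)^{-1/m} X_i$ trivializes $\mathcal{X}$ over the finite Kummer extension $K' := K(\sqrt[m]{a_1/a_0}, \sqrt[m]{a_2/a_0})$, exhibiting $J$ as a twist of the constant abelian variety $\mathrm{Jac}(F_m) \otimes_{\mathbb{C}} K$ (where $F_m: X_0^m + X_1^m + X_2^m = 0$ is the standard Fermat curve) by a cocycle with values in the diagonal torus $\mu_m \times \mu_m \subset \mathrm{Aut}(F_m)$. All weights of this torus on $H^0(F_m, \Omega^1)$ are nontrivial (the classical monomial basis of Fermat differentials has weights indexed by triples of positive integers summing to $m$), so the $K/\mathbb{C}$-trace of $J$ vanishes. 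The Lang--N\'eron theorem then yields that $J(K)$ is a finitely generated abelian group, so $J(K)/pJ(K)$ is finite.

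For the second claim, the $\mu_m^2$-cocycle acts on $J[p]$ through its $\mu_p^2$-quotient, so $J[p]$ is already trivialized over the $p$-group Kummer extension $K'_p := K(\sqrt[p]{a_1/a_0}, \sqrt[p]{a_2/a_0})$, with $G := \mathrm{Gal}(K'_p/K)$ a subgroup of $(\mathbb{F}_p)^2$. Because every nonzero finite $\mathbb{F}_p[G]$-module has nonzero $G$-invariants when $G$ is a $p$-group (the standard modular representation theory fact), we have $J[p]^G \neq 0$, so there is a $K$-rational injection $\mathbb{F}_p \hookrightarrow J[p]$. Since $\mu_p \subset \mathbb{C} \subset K$, Kummer theory identifies $H^1(K, \mathbb{F}_p) \cong K^*/(K^*)^p$, which is uncountable as $K$ is a function field of transcendence degree $2$ over $\mathbb{C}$. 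The induced map $H^1(K, \mathbb{F}_p) \to H^1(K, J[p])$ has finite kernel (coming from the finite group $H^0(K, J[p]/\mathbb{F}_p)$), so $H^1(K, J[p])$ is uncountable.

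The main obstacle is setting up the isotrivialization data correctly: one needs the Fermat-specific fact that the $\mu_m \times \mu_m$-action on holomorphic differentials has no trivial weight (to apply Lang--N\'eron with trivial trace), and one needs the observation that the $\mu_m^2$-twist on $J[p]$ descends to a $\mu_p^2$-twist (to place oneself in the $p$-group setting where modular representation theory delivers the crucial $K$-rational $p$-torsion class). Once these two points are verified, the argument is a formal sequence of Kummer-theoretic manipulations.
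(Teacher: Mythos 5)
Your overall strategy (Kummer sequence for multiplication by $p$, plus finiteness of $J(K)/pJ(K)$ via Lang--N\'eron, plus uncountability of $\mathrm{H}^1(K,J[p])$ via a $K$-rational $p$-torsion point and $K^*/(K^*)^p$) is genuinely different from the paper's proof, and the Lang--N\'eron half is fine: the weight computation on $\mathrm{H}^0(F_m,\Omega^1)$ is correct, the $K/\mathbb{C}$-trace vanishes, and $J(K)$ is finitely generated. But the second half has a genuine gap at the claim that ``the $\mu_m^{2}$-cocycle acts on $J[p]$ through its $\mu_p^{2}$-quotient.'' This is false whenever $m>p$ (for $p\geq 3$). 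Indeed, for $m\geq 4$ the map $\mathrm{Aut}(F_m)\to\mathrm{Aut}(\mathrm{Jac}(F_m))$ is injective (Hurwitz/Torelli, genus $\geq 2$), and by Serre's lemma the kernel of $\mathrm{Aut}(\mathrm{Jac}(F_m))\to\mathrm{Aut}(\mathrm{Jac}(F_m)[p])$ is torsion-free for $p\geq3$; hence the finite group $\mu_m^{2}$ injects into $\mathrm{GL}(J[p])$ and cannot factor through $\mu_p^{2}$. (For $p=2$ the kernel has exponent $\leq 2$, which again does not rescue the claim once $m$ has an odd factor.) Consequently the Galois image acting on $J[p]$ is not a $p$-group unless $m$ is a power of $p$, and the fixed-point theorem for $p$-groups does not apply. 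Note that no soft argument can produce the invariant vector you need: the quotient $F_m/\mu_m^{2}$ is $\mathbb{P}^1$, so $\mathrm{H}^1(F_m,\mathbb{Q})^{\mu_m^{2}}=0$, and the existence of nonzero invariants in $J[p]$ is a strictly integral (mod-$p$) phenomenon.

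Your argument does go through when $m$ is a prime power $p^k$, provided you replace $K'_p$ by the full trivializing extension $K'=K(\sqrt[m]{a_1/a_0},\sqrt[m]{a_2/a_0})$: then $\mathrm{Gal}(K'/K)\cong(\mathbb{Z}/p^k\mathbb{Z})^{2}$ is itself a $p$-group, invariants in $J[p]$ are nonzero, and the rest of your Kummer-theoretic chain (finite kernel from $\mathrm{H}^0(K,J[p]/\mathbb{F}_p)$, uncountability of $K^*/(K^*)^p$ for a two-variable function field) is correct and gives a proof that is considerably more elementary than the paper's. For general $m$, the missing input --- a nonzero $\mathrm{Gal}(\bar K/K)$-invariant class in $J[p]$ --- is true but is precisely the content of the paper's Lemma \ref{invariants}, which is proved by an explicit computation with the $\mu_m\times\mu_m$-action on $\mathrm{H}_1(F_m,\mathbb{Z}/m\mathbb{Z})$ following \cite{Galois-action-homology-MR3596577}; if you import that lemma, your proof closes up and remains shorter than the paper's route, which instead passes through relative Brauer groups, a comparison of ramification sequences over shrinking opens $U'-Z$, the splitting Lemma \ref{split} for uncountability, and countability of $\mathrm{H}^2(U,\mathbb{L})$ (Proposition \ref{exact-sequence-cardinality}, Proposition \ref{uncountable}, Lemma \ref{countable}). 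As written, however, the proposal proves the theorem only for $m$ a prime power.
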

The proof is based on Theorem \ref{thm1}, the idea is to compare the Weil-Ch\^atelet group over $U$ with that of open parts in $U$, then analyze the difference in the colimit. Eventually we show that the term $\mathrm{H}^1_{\mathrm{et}}(K,\mathrm{Pic}^0_{X/K})$ fits into a finite exact sequence, where one other term has uncountable cardinality and all the rest are countable. For uncountablity, the key input is Lemma \ref{split}; for countability, one uses finiteness of \'etale cohomology with torsion coefficients and Artin vanishing.

\subsection{Parallel results on
    the generic genus-\texorpdfstring{$g$}{g} curve}
Let $\pi\colon\mathcal{M}_{g,1}\to\mathcal{M}_g$ be the universal family og genus-$g$ curves. Let $K$ be the function field of $U$ and let $C/K$ be the generic fiber of $\pi$.
We pose the following questions:
\begin{question}\label{que1} Is every torsor of the universal Jacobian necessarily canonical: $$\mathrm{H}^1_{\mathrm{et}}(\mathcal{M}_g,\mathrm{Pic}^0_{\mathcal{M}_{g,1}/\mathcal{M}_g})=\langle[\mathrm{Pic}^1_{\mathcal{M}_{g,1}/\mathcal{M}_g}]\rangle?$$
\end{question}
\begin{question}\label{que2}Over the generic point, are there uncountably many non-canonical torsors $$\mathrm{card}\left(\mathrm{H}^1_{\mathrm{et}}(K,\mathrm{Pic}^0_{C/K})\right)>\aleph_0?$$
\end{question}
We reduce Question \ref{que1} to injection of some third group cohomology between mapping class groups, see Proposition \ref{part-res-1}. We also prove a related result: 
\begin{theorem}\label{thm3} For $g\geq4,n\geq1$, every torsor of the Jacobian of the universal $n$-marked curve is trivial
$$\mathrm{H}^1_{\mathrm{et}}(\mathcal{M}_{g,n},\mathrm{Pic}^0_{\mathcal{M}_{g,{n+1}}/\mathcal{M}_{g,n}})=0.$$
\end{theorem}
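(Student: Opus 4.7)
The plan is to exploit the tautological section $\sigma_1\colon \mathcal{M}_{g,n}\to \mathcal{M}_{g,n+1}$ afforded by the first marked point, defined whenever $n\geq 1$. Twisting by $-d\sigma_1$ gives an isomorphism of $\mathrm{Pic}^0_\pi$-torsors $\mathrm{Pic}^d_\pi\cong\mathrm{Pic}^0_\pi$, so the canonical torsor $\mathrm{Pic}^1_\pi$ is already trivial; it therefore remains to show that \emph{every} torsor of $\mathrm{Pic}^0_\pi$ over $\mathcal{M}_{g,n}$ is trivial.

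As in the proof of Theorem \ref{thm1}, I would then run the Leray spectral sequence for $\pi$ with coefficients in $\mathbb{G}_m$. Since the geometric fibres are smooth projective curves, $R^q\pi_*\mathbb{G}_m=0$ for $q\geq 2$ while $R^1\pi_*\mathbb{G}_m=\mathrm{Pic}_\pi$, and the section $\sigma_1$ splits $\pi^*$ on every cohomology group. The spectral sequence therefore yields a short exact sequence
$$0\to \mathrm{Br}(\mathcal{M}_{g,n})\to \mathrm{Br}(\mathcal{M}_{g,n+1})\to H^1_{\mathrm{et}}(\mathcal{M}_{g,n},\mathrm{Pic}_\pi)\to 0.$$
Combining this with the degree sequence $0\to\mathrm{Pic}^0_\pi\to\mathrm{Pic}_\pi\to\underline{\mathbb{Z}}\to 0$, using that $\sigma_1$ realises a degree-one section together with the vanishing $H^1_{\mathrm{et}}(\mathcal{M}_{g,n},\underline{\mathbb{Z}})=0$ (since $H_1$ of the mapping class group is torsion for $g\geq 3$), one obtains the identification
$$H^1_{\mathrm{et}}(\mathcal{M}_{g,n},\mathrm{Pic}^0_\pi)\cong \mathrm{Br}(\mathcal{M}_{g,n+1})/\pi^*\mathrm{Br}(\mathcal{M}_{g,n}).$$

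It therefore suffices to show that $\pi^*$ is surjective on Brauer groups. By Kummer theory and the divisibility of $\mathrm{Pic}^0_\pi$, this reduces to the vanishing $H^1_{\mathrm{et}}(\mathcal{M}_{g,n},\mathrm{Pic}^0_\pi[N])=0$ for every $N$. Using $\mathcal{M}_{g,n}\simeq B\Gamma_{g,n}$ over $\mathbb{C}$ and identifying the monodromy action on $\mathrm{Pic}^0_\pi[N]\cong H^1(\Sigma_g,\mathbb{Z}/N)$ with the standard symplectic representation via \cite{Galois-action-homology-MR3596577}, this translates into the group-cohomological statement
$$H^1(\Gamma_{g,n},H^1(\Sigma_g,\mathbb{Z}/N))=0\quad\text{for }g\geq 4,\ n\geq 1,$$
which I would attack by induction on $n$ via the Birman exact sequence
$$1\to \pi_1(\Sigma_g\setminus\{p_1,\ldots,p_n\})\to\Gamma_{g,n+1}\to\Gamma_{g,n}\to 1$$
and the Hochschild-Serre spectral sequence, reducing to known stable-range computations for the mapping class group with symplectic coefficients.

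The main obstacle is this last vanishing, which requires input from the literature on cohomology of mapping class groups with twisted coefficients, in the spirit of Morita-Kawazumi Johnson-homomorphism calculations. The hypothesis $g\geq 4$ is precisely what places us in the stable range where the relevant vanishings hold, while the hypothesis $n\geq 1$ is what allows $\sigma_1$ to trivialize the canonical torsors in the first place — neither restriction is expected to be cosmetic, and indeed both failures must be confronted in any attempted extension to $\mathcal{M}_g$ (Question \ref{que1}).
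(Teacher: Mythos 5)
Your opening moves coincide with the paper's own proof: the tautological section $\sigma_1$ trivializes the canonical torsors and splits $\pi^*$ on $\mathrm{H}^3(\cdot,\mathbb{G}_m)$, and the Leray spectral sequence together with the degree sequence then identifies
$\mathrm{H}^1_{\mathrm{et}}(\mathcal{M}_{g,n},\mathrm{Pic}^0_\pi)\cong\mathrm{H}^1_{\mathrm{et}}(\mathcal{M}_{g,n},\mathrm{Pic}_\pi)\cong\mathrm{Br}(\mathcal{M}_{g,n+1})/\pi^*\mathrm{Br}(\mathcal{M}_{g,n})$.
Up to this point you are correct, and this is exactly the paper's reduction. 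The problem is your last step. The reduction ``by Kummer theory and divisibility'' to the vanishing of $\mathrm{H}^1_{\mathrm{et}}(\mathcal{M}_{g,n},\mathrm{Pic}^0_\pi[N])$ silently discards a term: the Kummer sequence for the abelian scheme $\mathrm{Pic}^0_\pi$ reads
$$0\to\mathrm{Pic}^0_\pi(\mathcal{M}_{g,n})\otimes\mathbb{Z}/N\mathbb{Z}\to\mathrm{H}^1_{\mathrm{et}}(\mathcal{M}_{g,n},\mathrm{Pic}^0_\pi[N])\to\mathrm{H}^1_{\mathrm{et}}(\mathcal{M}_{g,n},\mathrm{Pic}^0_\pi)[N]\to0,$$
so what the theorem requires is that $\mathrm{H}^1(\mathrm{Pic}^0_\pi[N])$ be \emph{exhausted by reductions of global sections}, not that it vanish. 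This is not a cosmetic distinction, because the vanishing you propose to prove is in fact \emph{false} for every $n\geq1$: the same marked point that gave you $\sigma_1$ also produces the Earle section $[(2g-2)\sigma_1-\omega_\pi]\in\mathrm{Pic}^0_\pi(\mathcal{M}_{g,n})$, and group-cohomologically Morita's computation gives $\mathrm{H}^1(\Gamma_{g,1};H_1(\Sigma_g;\mathbb{Z}))\cong\mathbb{Z}$, generated by the Earle class. Via the coefficient sequence $0\to H\xrightarrow{N}H\to H\otimes\mathbb{Z}/N\mathbb{Z}\to0$ and injectivity of inflation along the forgetful surjection $\Gamma_{g,n}\twoheadrightarrow\Gamma_{g,1}$, this forces $\mathrm{H}^1(\Gamma_{g,n};H^1(\Sigma_g,\mathbb{Z}/N\mathbb{Z}))\neq0$ for all $n\geq1$ and $N\geq2$, in any genus. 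So the induction you propose via the Birman exact sequence would be attempting to prove a false statement; no stable-range input can rescue it.

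The paper closes the argument much more cheaply: for $g\geq4$ one has $\mathrm{Br}(\mathcal{M}_{g,m})=0$ for all $m\geq0$ (Lemma \ref{Pirisi-lemma}, quoting \cite{Pirisi-Brauer-Group-Universal}), so the quotient $\mathrm{Br}(\mathcal{M}_{g,n+1})/\pi^*\mathrm{Br}(\mathcal{M}_{g,n})$ is zero outright and the theorem follows from the identification you already established. This also corrects your reading of the hypotheses: $g\geq4$ is not a stable-range condition on mapping class group cohomology but precisely the range where Pirisi's Brauer vanishing is known (for $g=3$ one has $\mathrm{Br}(\mathcal{M}_3)\cong\mathbb{Z}/2\mathbb{Z}$ and $\mathrm{Br}(\mathcal{M}_{3,n})$ is open, see \cite{Pirisi-Brauer-genus-3}), while $n\geq1$ plays exactly the role you assign it, namely supplying the section.
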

One can work on Question \ref{que2} with same strategy as in Theorem \ref{thm2}.
However, on $\mathcal{M}_g$, we do not have vanishing results as in the proof of Theorem \ref{thm2}. 
Some non-vanishing terms are inessential, as they are uniformly bounded by a fixed countable abelian group when we take direct limit to the generic point. The only difficulty is that the Artin vanishing $\mathrm{H}^3(U\backslash\overline{Z},\mathbb{Q}_l/\mathbb{Q}_l(1))=0$ used in Lemma \ref{lemma-10-vanishing} does not generalize to $\mathrm{H}^3(\mathcal{M}_{g}\backslash\overline{Z},\mathbb{Q}_l/\mathbb{Z}_l)=0$. 
In Proposition \ref{part-res-2},
we give a sufficient condition for an affirmatively answer to Question \ref{que2}. The sufficient condition might be too strong to hold, or Question \ref{que2} might just have a negative answer, i.e., $\mathrm{H}^1_{\mathrm{et}}(K,\mathrm{Pic}^0_{C/K})$ could be countable or even finite, that would be a bit surprising.

\section{Preparation}
The universal family of degree-$m$ Fermat curves $\mathrm{pr_1}\colon\mathcal{X}\to\mathbb{P}^2_{[a_0:a_1:a_2]}$ is the union of the subfamily of smooth Fermat curves and the subfamily of singular curves. We show the smooth part is isotrivial and explicitly calculate the monodromy action. We give explicit stratification of the singular part. Then we recall the ramification sequence for Brauer group -- the key tool for the proof of our main theorems.

\subsection{The smooth part: Isotriviality and the monodromy action}We show that the universal family of smooth Fermat curves $(\mathrm{pr}_1)_U\colon\mathcal{X}_U\to U$ is isotrivial, and spell out its monodromy action. Let $F_m$ be the standard Fermat curve in $\mathbb{P}^2_{[X_0:X_1:X_2]}$ defined by $$X_0^m+X_1^m+X_2^m=0.$$
\begin{lemma} Let $\widetilde{U}=U$, and let $f\colon\widetilde{U}\to U$ be the map induced by $[a_0:a_1:a_2]\mapsto [a_0^m:a_1^m:a_2^m]$,
then there is an isomorphism $\mathcal{X}_U\times_U\widetilde{U}\cong F_m\times_{\mathbb{C}}\widetilde{U}$ over $\widetilde{U}$. 
\end{lemma}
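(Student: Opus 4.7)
The plan is to exhibit the isomorphism by a fiberwise linear change of coordinates. After pulling back along $f$, the fiber of $\mathcal{X}_U\times_U\widetilde{U}$ over a point $[a_0{:}a_1{:}a_2]\in\widetilde{U}$ is the plane curve cut out by $a_0^m X_0^m + a_1^m X_1^m + a_2^m X_2^m = 0$, so the natural move is to substitute $Y_i := a_iX_i$, which turns the equation into the standard Fermat equation $Y_0^m+Y_1^m+Y_2^m=0$. Crucially, $a_0a_1a_2$ is invertible on $\widetilde{U}=U$, so this substitution defines an automorphism of $\mathbb{P}^2_{[X_0:X_1:X_2]}$ that varies algebraically with $[a_0{:}a_1{:}a_2]\in\widetilde{U}$, which is exactly why $f$ was chosen to be the coordinatewise $m$-th power map.

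Concretely, I would define a morphism
$$\Phi\colon \mathcal{X}_U\times_U\widetilde{U}\longrightarrow F_m\times_{\mathbb{C}}\widetilde{U},\qquad \bigl([a_0{:}a_1{:}a_2],\,[X_0{:}X_1{:}X_2]\bigr)\longmapsto \bigl([a_0X_0{:}a_1X_1{:}a_2X_2],\,[a_0{:}a_1{:}a_2]\bigr),$$
and verify three things: first, the image is well-defined in $\mathbb{P}^2$, because no $a_i$ vanishes on $\widetilde{U}$ and hence $[a_0X_0{:}a_1X_1{:}a_2X_2]$ is never the zero triple; second, the image satisfies $Y_0^m+Y_1^m+Y_2^m=0$ whenever the source satisfies $a_0^mX_0^m+a_1^mX_1^m+a_2^mX_2^m=0$, so $\Phi$ lands in $F_m\times_{\mathbb{C}}\widetilde{U}$; third, the parallel formula
$$\bigl([Y_0{:}Y_1{:}Y_2],\,[a_0{:}a_1{:}a_2]\bigr)\longmapsto\bigl([a_0{:}a_1{:}a_2],\,[a_1a_2Y_0{:}a_0a_2Y_1{:}a_0a_1Y_2]\bigr)$$
provides a two-sided inverse, again well-defined over $\widetilde{U}$. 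Both maps are evidently morphisms over $\widetilde{U}$, so $\Phi$ is the desired isomorphism.

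I do not expect any genuine obstacle here: the content of the lemma is just the observation that after the $m$-th power base change, the pulled-back equation acquires the form $\sum (a_iX_i)^m$ and so becomes constant after the fiberwise rescaling $X_i\mapsto a_iX_i$. The only subtlety worth flagging is that this construction uses the hypothesis $[a_0{:}a_1{:}a_2]\in U=\mathbb{P}^2\setminus V(a_0a_1a_2)$ in an essential way, to ensure that the rescaling is a linear automorphism rather than a mere linear endomorphism of $\mathbb{P}^2$.
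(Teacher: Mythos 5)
Your proof is correct and is essentially the same as the paper's: both pull back the equation to $a_0^mX_0^m+a_1^mX_1^m+a_2^mX_2^m=0$ and apply the fiberwise rescaling $[X_0{:}X_1{:}X_2]\mapsto[a_0X_0{:}a_1X_1{:}a_2X_2]$, which is invertible precisely because $a_0a_1a_2\neq0$ on $\widetilde{U}$. You merely spell out the well-definedness checks and the explicit inverse that the paper leaves implicit.
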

\begin{proof}
The family $\mathcal{X}_U\times_U\widetilde{U}\subset\mathbb{P}^2_{\widetilde{U}}$ is defined by $a_0^mX_0^m+a_1^mX_1^m+a_2^mX_2^m=0$, therefore it is $\widetilde{U}$-isomorphic to $F_m\times_{\mathbb{C}}\widetilde{U}\subset\mathbb{P}^2_{\widetilde{U}}$ via $[X_0:X_1:X_2]\mapsto [a_0X_0:a_1X_1:a_2X_2]$.
\end{proof}
From now on, let us use $\phi$ to denote 
the $\widetilde{U}$-isomorphism 
$\phi\colon\mathcal{X}_{\widetilde{U}}\to C\times_\mathbb{C}\widetilde{U}$ defined by $$[X_0:X_1:X_2]\mapsto[a_0X_0:a_1X_1:a_2X_2].$$

Note that $f\colon\widetilde{U}\to U$ is a finite Galois cover, with deck transformation group $\mathrm{Aut}_U(\widetilde{U})\cong\mu_m^{\oplus2}.$ We choose our notations so that for any
$\sigma:=(\zeta_0,\zeta_1)\in\mu_m^{\oplus2}$, the action $a_\sigma\colon \widetilde{U}\to\widetilde{U}$ pulls back the coordinates on $\widetilde{U}$ by $[a_0:a_1:a_2]\mapsto [\zeta_0a_0:\zeta_1a_1:a_2].$
\begin{lemma}\label{mondromy-action}The family $(\mathrm{pr}_1)_U\colon\mathcal{X}_U\to U$ can be constructed as the quotient of projection $\mathrm{pr}_1'\colon F_m\times_{\mathbb{C}}\widetilde{U}\to \widetilde{U}$ under the equivariant $\mu_m^{\oplus2}$-action, where $(\zeta_0,\zeta_1)\in\mu_m^{\oplus2}$ acts on $F_m\times_{\mathbb{C}}\widetilde{U}$ via $$([X_0:X_1:X_2],[a_0:a_1:a_2])\mapsto ([\zeta_0X_0:\zeta_1X_1:X_2],[\zeta_0a_0:\zeta_1a_1:a_2])$$  and on $\widetilde{U}$ via $[a_0:a_1:a_2]\mapsto[\zeta_0a_0:\zeta_1a_1:a_2]$.
\end{lemma}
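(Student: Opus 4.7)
The plan is to transport the natural deck transformation action on $\mathcal{X}_{\widetilde{U}}=\mathcal{X}_U\times_U\widetilde{U}$ along the trivialization $\phi$ constructed in the previous lemma, check that the resulting action on $F_m\times_{\mathbb{C}}\widetilde{U}$ has the explicit form given in the statement, and then invoke Galois descent along $f\colon\widetilde{U}\to U$ to conclude.

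First, I would observe that since $\widetilde{U}\to U$ is a finite \'etale Galois cover with group $\mu_m^{\oplus 2}$, the group $\mu_m^{\oplus 2}$ acts on $\mathcal{X}_{\widetilde{U}}$ through its action on the second factor of $\mathcal{X}_U\times_U\widetilde{U}$, and the quotient of this action recovers the original family $\mathcal{X}_U\to U$. This is a tautology and requires no verification beyond the definition of a Galois cover.

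Next comes the main computational step: transporting this action to $F_m\times_{\mathbb{C}}\widetilde{U}$ via the isomorphism $\phi\colon [X_0:X_1:X_2]\mapsto[a_0X_0:a_1X_1:a_2X_2]$. For $\sigma=(\zeta_0,\zeta_1)\in\mu_m^{\oplus 2}$, the action $a_\sigma$ on $\widetilde{U}$ pulls back $[a_0:a_1:a_2]$ to $[\zeta_0a_0:\zeta_1a_1:a_2]$, and leaves the $X$-coordinates on $\mathcal{X}_{\widetilde{U}}$ untouched. Applying $\phi$ after this action sends $([X_0:X_1:X_2],[a_0:a_1:a_2])$ to $([\zeta_0a_0X_0:\zeta_1a_1X_1:a_2X_2],[\zeta_0a_0:\zeta_1a_1:a_2])$. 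Substituting $Y_i=a_iX_i$ on the target, this is exactly the formula $([Y_0:Y_1:Y_2],[a_0:a_1:a_2])\mapsto([\zeta_0Y_0:\zeta_1Y_1:Y_2],[\zeta_0a_0:\zeta_1a_1:a_2])$ stated in the lemma. One verifies trivially that this map preserves the Fermat equation $Y_0^m+Y_1^m+Y_2^m=0$, so the action is well defined on $F_m\times_{\mathbb{C}}\widetilde{U}$ and is $\widetilde{U}$-equivariant.

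Finally, since $\phi$ is a $\mu_m^{\oplus 2}$-equivariant $\widetilde{U}$-isomorphism and $\widetilde{U}\to U$ is Galois with group $\mu_m^{\oplus 2}$, Galois descent identifies the quotient $(F_m\times_{\mathbb{C}}\widetilde{U})/\mu_m^{\oplus 2}$ with $\mathcal{X}_{\widetilde{U}}/\mu_m^{\oplus 2}=\mathcal{X}_U$, compatibly with the projections to $U$. No real obstacle arises here; the only potentially delicate point is the bookkeeping of which coordinate variables are acted upon, and the main content of the lemma is simply the explicit formula, which the computation above makes manifest.
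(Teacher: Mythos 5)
Your proposal is correct and follows essentially the same route as the paper: both identify the deck-transformation action of $\mu_m^{\oplus2}$ on the fiber product $\mathcal{X}_{\widetilde{U}}=\mathcal{X}_U\times_U\widetilde{U}$, transport it through the trivialization $\phi$, and verify the explicit formula by direct computation before passing to the quotient. Your substitution $Y_i=a_iX_i$ is just a repackaging of the paper's explicit composition $\phi\circ(a_\sigma)_{\mathcal{X}_U}\circ\phi^{-1}$, so the two computations coincide step for step.
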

\begin{proof}
For $\sigma:=(\zeta_0,\zeta_1)\in\mu_m^{\oplus2}$, let $a_{\sigma}\in\mathrm{Aut}_U(\widetilde{U})$ be the deck transformation. Let  
$(a_\sigma)_{\mathcal{X}_U}\in\mathrm{Aut}_{\mathbb{C}}(\mathcal{X}_{\widetilde{U}})$ 
be the base change of $a_\sigma$ along $(\mathrm{pr}_1)_U\colon\mathcal{X}_U\to U$. Note that $\mathcal{X}_U$ is constructed as the quotient of $F_m\times_\mathbb{C}\widetilde{U}$ by $\mathrm{Aut}_U(\widetilde{U})$ via $\sigma\mapsto \phi\circ(a_\sigma)_{\mathcal{X}_U}\circ\phi^{-1}$. Then one directly computes the composition $([X_0:X_1:X_2],[a_0:a_1:a_2])\mapsto ([a_0^{-1}X_0:a_1^{-1}X_1:a_2^{-1}X_2],[a_0:a_1:a_2])\mapsto([a_0^{-1}X_0:a_1^{-1}X_1:a_2^{-1}X_2],[\zeta_0a_0:\zeta_1a_1:a_2])\mapsto([\zeta_0X_0:\zeta_1X_1:X_2],[\zeta_0a_0:\zeta_1a_1:a_2])$.
\end{proof}

Under the trivialization $\phi\colon\mathcal{X}_{\widetilde{U}}\to F_m\times_{\mathbb{C}}\widetilde{U}$ the the fibers over $\widetilde{U}$ are canonically identified: For any fixed point $P=[a_0:a_1:a_2]\in \widetilde{U}$, the $\mathrm{Aut}_U(\widetilde{U})$-action maps the fiber over $P$ to the fiber over $\sigma(P)$ via $$\rho(\sigma)\colon F_m\to F_m,\  [X_0:X_1:X_2]\mapsto[\zeta_0X_0:\zeta_1X_1:X_2].$$ 

Let us call $\rho\colon\mathrm{Aut}_U(\widetilde{U})\to\mathrm{Aut}_{\mathbb{C}}(F_m),\ \sigma\mapsto \rho(\sigma)$ the monodromy action of the isotrivial family with respect to the trivialization $\phi$. In general, different choices for $\phi$ alters $\rho$ by an inner automorphism of $\mathrm{Aut}_U(\widetilde{U})$. In our case, since $\mathrm{Aut}_U(\widetilde{U})$ is an abelian group, any other trivialization of $\mathcal{X}_{\widetilde{U}}$ gives rise to the same monodromy action.

\subsection{The ramification sequence}
The following proposition is the basis for our calculation:
\begin{lemma}{\cite[Theorem 3.18]{CT-Brauer-Grothendieck-Group}}\label{ramificaiton-lemma} Let $k$ be a field and let $X$ be a $k$-variety. Let $Z\subset X$ be a smooth closed subscheme of pure codimension $c$. Let $U=X\backslash Z$ be the open complement. Then for any prime $l\neq\mathrm{char}(k)$, the following holds
\begin{enumerate}
\item If $c\geq2$, then the restriction map $\mathrm{Br}(X)[l^{\infty}]\to\mathrm{Br}(U)[l^{\infty}]$ is an isomorphism.
\item If $c=1$ and $D_1,\cdots,D_m$ are the connected components of $Z$, then we have exact sequence
$$\xymatrix{
0\ar[r]&\mathrm{Br}(X)[l^{\infty}]\ar[r]&\mathrm{Br}(U)[l^{\infty}]\ar[r]&\bigoplus_{i=1}^m\mathrm{H}_{\mathrm{et}}^1(D_i,\mathbb{Q}_l/\mathbb{Z}_l)\\
&\ar[r]&\mathrm{H}^3_{\mathrm{et}}(X,\mathbb{Q}_l/\mathbb{Z}_l(1))\ar[r]&\mathrm{H}^3_{\mathrm{et}}(U,\mathbb{Q}_l/\mathbb{Z}_l(1))
}$$    
\end{enumerate}
\end{lemma}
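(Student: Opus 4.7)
The plan is to convert the statement about Brauer groups into one about \'etale cohomology with divisible torsion coefficients, and then apply absolute cohomological purity. First I would invoke the Kummer sequence $1 \to \mu_{l^n} \to \mathbb{G}_m \to \mathbb{G}_m \to 1$ on both $X$ and $U$ and pass to the colimit in $n$; for a smooth $k$-variety this yields a short exact sequence
$$0 \to \mathrm{Pic}(\cdot)\otimes\mathbb{Q}_l/\mathbb{Z}_l \to \mathrm{H}^2_{\mathrm{et}}(\cdot, \mathbb{Q}_l/\mathbb{Z}_l(1)) \to \mathrm{Br}(\cdot)[l^\infty] \to 0,$$
so the $l$-primary Brauer group is identified, up to a Picard correction, with torsion \'etale cohomology in degree two.

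Next I would apply absolute cohomological purity (Gabber) to the closed immersion $i\colon Z \hookrightarrow X$: for each $n$ coprime to $\mathrm{char}(k)$, the sheaf $R^j i^{!}\,\mathbb{Z}/l^n(1)$ is concentrated in degree $j = 2c$ where it equals $(\mathbb{Z}/l^n)(1-c)|_Z$. Feeding this into the localization long exact sequence for the open-closed decomposition $U \hookrightarrow X \hookleftarrow Z$ yields
$$\cdots \to \mathrm{H}^{i-2c}(Z, \mathbb{Q}_l/\mathbb{Z}_l(1-c)) \to \mathrm{H}^i(X, \mathbb{Q}_l/\mathbb{Z}_l(1)) \to \mathrm{H}^i(U, \mathbb{Q}_l/\mathbb{Z}_l(1)) \to \mathrm{H}^{i-2c+1}(Z, \mathbb{Q}_l/\mathbb{Z}_l(1-c)) \to \cdots.$$
For $c \ge 2$ and $i = 2, 3$ the $Z$-contributions vanish for degree reasons, so restriction is an isomorphism on $\mathrm{H}^2$ and $\mathrm{H}^3$; combined with the fact that $\mathrm{Pic}(X) \to \mathrm{Pic}(U)$ is an isomorphism when $Z$ has codimension $\ge 2$, the identification of the previous paragraph delivers statement (1). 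For $c = 1$ and $i = 2, 3$ the sequence specializes to exactly the six-term shape displayed in statement (2), after decomposing $\mathrm{H}^j(Z, \cdot) = \bigoplus_i \mathrm{H}^j(D_i, \cdot)$ over the connected components of $Z$.

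The main bookkeeping step, and the place I expect the most friction, is passing from this cohomological sequence to the Brauer version so that the injection $0 \to \mathrm{Br}(X)[l^\infty] \to \mathrm{Br}(U)[l^\infty]$ materializes at the left end. The boundary map $\mathrm{H}^0(Z, \mathbb{Q}_l/\mathbb{Z}_l) \to \mathrm{H}^2(X, \mathbb{Q}_l/\mathbb{Z}_l(1))$ is the cycle class, landing in $\mathrm{Pic}(X) \otimes \mathbb{Q}_l/\mathbb{Z}_l$ and sending the generator of $\mathrm{H}^0(D_i, \mathbb{Q}_l/\mathbb{Z}_l)$ to the class of $D_i$. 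Combining this with the classical divisor-class localization sequence $\bigoplus_i \mathbb{Z} \to \mathrm{Pic}(X) \to \mathrm{Pic}(U) \to 0$, tensored with $\mathbb{Q}_l/\mathbb{Z}_l$, a diagram chase on the commuting Kummer short exact sequences for $X$ and $U$ cancels the entire Picard contribution and leaves the desired five-term sequence beginning with a zero. The only real care needed is that right-exactness of $\otimes\mathbb{Q}_l/\mathbb{Z}_l$ suffices here and no $\mathrm{Tor}$-term intrudes, which one verifies by hand using that $\mathbb{Q}_l/\mathbb{Z}_l$ is a colimit of the cyclic groups $\mathbb{Z}/l^n$.
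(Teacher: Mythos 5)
Your proposal is correct and follows exactly the route the paper indicates: the paper does not prove this lemma but cites \cite[Theorem 3.18]{CT-Brauer-Grothendieck-Group} and sketches precisely your argument, namely splicing the Gysin/localization sequence (via absolute purity $i^!\mathcal{F}\cong i^*\mathcal{F}[-2c](-c)$, which is where smoothness/regularity of $X$ and $l\neq\mathrm{char}(k)$ enter) with the colimit over $n$ of the Kummer sequences, so that the Picard contributions cancel against the image of the Gysin boundary $\mathrm{H}^0(Z,\mathbb{Q}_l/\mathbb{Z}_l)\to\mathrm{Pic}(X)\otimes\mathbb{Q}_l/\mathbb{Z}_l$. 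One inessential slip: for $c=2$ the restriction $\mathrm{H}^3(X,\mathbb{Q}_l/\mathbb{Z}_l(1))\to\mathrm{H}^3(U,\mathbb{Q}_l/\mathbb{Z}_l(1))$ is only injective rather than an isomorphism, since the outgoing boundary lands in $\mathrm{H}^{4-2c}(Z,\mathbb{Q}_l/\mathbb{Z}_l(1-c))=\mathrm{H}^0(Z,\mathbb{Q}_l/\mathbb{Z}_l(-1))\neq0$; this does not matter because part (1) only requires the isomorphism on $\mathrm{H}^2$ together with $\mathrm{Pic}(X)\cong\mathrm{Pic}(U)$.
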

We briefly explain the notations. For an abelian group $M$, we denote the subgroup of $n$-torsion elements by $M[n]$. Let $l$ be a prime, we define the $l$-power torsion subgroup of $M$ by
$M[l^\infty]=\bigcup_{r\geq1}M[l^r]$. For a torsion abelian group $M$, one has $M[l^\infty]\cong M\otimes_{\mathbb{Z}}\mathbb{Z}[1/l]$, and one has primary decomposition $$M=\bigoplus_{l\textrm{\ prime}}M[l^{\infty}]=\bigoplus_{l\textrm{\ prime}}(M\otimes_{\mathbb{Z}}\mathbb{Z}[1/l]).$$ By exactness of localization, for any submodule $N\subset M$, one has $(M/N)[l^{\infty}]=M[l^{\infty}]/N[l^{\infty}]$. On the finite level, in general $(M/N)[l^r]\ncong M[l^r]/N[l^r]$.
Here the group $\mathbb{Q}_l/\mathbb{Z}_l(1)$ is defined as union of $l$-primary roots unity
$\mathrm{colim}_{r\geq1}\mu_{l^r}$. Since cohomology commutes with directed colimit, one has $\mathrm{H}_{\mathrm{et}}^i(X,\mathbb{Q}_l/\mathbb{Z}_l(1))\cong\mathrm{colim}_{r\geq1}\mathrm{H}_{\mathrm{et}}^i(X,\mu_{l^r})$. The ramification is obtained by splicing Gysin sequence with the long exactness of Kummer sequence. One needs the condition $l\neq\mathrm{char}(k)$ both for the Kummer sequence and for the excision sequence, where purity $i^!\mathcal{F}\cong i^*\mathcal{F}[-2c](-c)$ is used. We will not worry about these since we only work over $\mathbb{C}$.

\section{Proof of Theorem \ref{thm1}}
We will play around several short exact sequences to calculate $\mathrm{H}^1(U,\mathrm{Pic}^0_{\mathcal{X}_U/U})$.
\subsection{Short exact sequence of connected components}
Recall that $\mathrm{Pic}_{\mathcal{X}_U/U}$ is the relative Picard scheme of $(\mathrm{pr}_1)_U\colon\mathcal{X}_U\to U$ and $\mathrm{Pic}^0_{\mathcal{X}_U/U}$ is the kernel of the degree map.
\begin{lemma}\label{lem-conn-components} 
We have short exact sequence 
$$\xymatrix{\mathrm{Pic}_{\mathcal{X}_U/U}(U)\ar[r]^-{\deg_U}&\mathbb{Z}\ar[r]^-{\delta}&\mathrm{H}_{\mathrm{et}}^1(U,\mathrm{Pic}^0_{\mathcal{X}_U/U})\ar[r]&\mathrm{H}_{\mathrm{et}}^1(U,\mathrm{Pic}_{\mathcal{X}_U/U})\ar[r]&0,}$$
the map $\delta$ sends $d\in\mathbb{Z}$ to the class of $\mathrm{Pic}^d_{\mathcal{X}_U/U}$.
\end{lemma}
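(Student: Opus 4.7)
The plan is to produce the stated sequence as a piece of the long exact cohomology sequence attached to the short exact sequence of étale sheaves on $U$
$$0 \longrightarrow \mathrm{Pic}^0_{\mathcal{X}_U/U} \longrightarrow \mathrm{Pic}_{\mathcal{X}_U/U} \xrightarrow{\ \deg\ } \underline{\mathbb{Z}} \longrightarrow 0,$$
where exactness on the right is definitional: $\mathrm{Pic}^0_{\mathcal{X}_U/U}$ is by construction the kernel of the degree map, and surjectivity holds étale-locally because the family $(\mathrm{pr}_1)_U\colon\mathcal{X}_U\to U$ admits sections (the fibers are plane curves of degree $m$ with abundant rational points; already the intersection with a coordinate hyperplane yields a degree one divisor after passing to a suitable étale cover, e.g.\ the cover $\widetilde U\to U$ of Lemma~\ref{mondromy-action} on which the family becomes $F_m\times\widetilde U$ and so carries a rational point).

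Next I would take the long exact sequence in étale cohomology and extract the relevant five-term piece
$$\mathrm{Pic}_{\mathcal{X}_U/U}(U)\xrightarrow{\deg_U} \mathrm{H}^0_{\mathrm{et}}(U,\underline{\mathbb{Z}})\xrightarrow{\delta} \mathrm{H}^1_{\mathrm{et}}(U,\mathrm{Pic}^0_{\mathcal{X}_U/U}) \longrightarrow \mathrm{H}^1_{\mathrm{et}}(U,\mathrm{Pic}_{\mathcal{X}_U/U}) \longrightarrow \mathrm{H}^1_{\mathrm{et}}(U,\underline{\mathbb{Z}}).$$
Since $U$ is connected, $\mathrm{H}^0_{\mathrm{et}}(U,\underline{\mathbb{Z}})=\mathbb{Z}$. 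The key point to close the sequence on the right is the vanishing $\mathrm{H}^1_{\mathrm{et}}(U,\underline{\mathbb{Z}})=0$, which follows because this group computes continuous homomorphisms from the profinite group $\pi_1^{\mathrm{et}}(U)$ into the discrete torsion-free group $\mathbb{Z}$, and any such homomorphism has finite image hence is trivial.

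Finally I would identify the boundary map $\delta$: given $d\in\mathbb{Z}$ viewed as a global section of $\underline{\mathbb{Z}}$, its image under $\delta$ is by construction the $\mathrm{Pic}^0_{\mathcal{X}_U/U}$-torsor parametrizing local lifts of $d$ along $\deg$, which is precisely the fiber $\mathrm{Pic}^d_{\mathcal{X}_U/U}=\deg^{-1}(d)$. Hence $\delta(d)=[\mathrm{Pic}^d_{\mathcal{X}_U/U}]$, as asserted.

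The only step I expect to demand any care is the étale-local surjectivity of $\deg$ and the vanishing $\mathrm{H}^1_{\mathrm{et}}(U,\underline{\mathbb{Z}})=0$; both are essentially formal once one remembers that $\mathrm{Pic}^0$ is defined as the kernel of $\deg$ and that $\underline{\mathbb{Z}}$ is torsion-free. The rest is a direct reading of the long exact sequence.
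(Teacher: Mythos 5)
Your proof is correct and follows essentially the same route as the paper: the short exact sequence of étale sheaves $0\to\mathrm{Pic}^0_{\mathcal{X}_U/U}\to\mathrm{Pic}_{\mathcal{X}_U/U}\to\underline{\mathbb{Z}}\to0$, the vanishing $\mathrm{H}^1_{\mathrm{et}}(U,\underline{\mathbb{Z}})=\mathrm{Hom}_{\mathrm{cont}}(\pi_1^{\mathrm{et}}(U),\mathbb{Z})=0$ by profiniteness, and the identification of $\delta(d)$ with the torsor of local lifts of $d$, which the paper phrases equivalently via the cocycle $\mathcal{O}_{\mathcal{X}_{U'}}(S''-S')$ of differences of local sections. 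Your added justification of the étale-local surjectivity of $\deg$ (via the trivializing cover $\widetilde U\to U$) is a point the paper leaves implicit, and it is a welcome precision.
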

\begin{proof}
Consider the exact sequence of \'etale sheaves on $U$:
$$\xymatrix{
0\ar[r]&\mathrm{Pic}^0_{\mathcal{X}_U/U}\ar[r]&\mathrm{Pic}_{\mathcal{X}_U/U}\ar[r]^-{\deg}&\underline{\mathbb{Z}_U}\ar[r]&0
},$$
where $\underline{\mathbb{Z}_U}$ is the constant \'etale sheaf with stalks $\mathbb{Z}$ on $U$. Note that $\mathrm{H}^1_{\mathrm{et}}(U,\underline{\mathbb{Z}_U})=\mathrm{Hom}_{\mathrm{cont}}(\pi_1^{\mathrm{et}}(U),\mathbb{Z})=0$ because
since $\pi_1^{\mathrm{et}}(U)$ is a profinite group, we conclude first part of the lemma from the associated long exact sequence. 
For the description of the connecting homomorphism $\delta$, it suffices to show $1$ maps to $[\mathrm{Pic}^1_{\mathcal{X}_U/U}]$. Let us represent the degree one line bundle by $\mathcal{O}_{\mathcal{X}_{U'}}(S')$, where $S'$ is a section of some \'etale base change $(\mathrm{pr}_1)_{U'}\colon\mathcal{X}_{U'}\to U'$. A different choice $S''$ alters the line bundle by $\mathcal{O}_{\mathcal{X}_{U'}}(S''-S')\in\mathrm{Pic}^0_{\mathcal{X}_{U'}}(U')$. This yields the cocycle of the torsor $\mathrm{Pic}^1_{\mathcal{X}_U/U}$.
\end{proof}
Now Theorem \ref{thm1} will follow once we show that 
\begin{enumerate}
\item The \'etale cohomology group $\mathrm{H}_{\mathrm{et}}^1(U,\mathrm{Pic}_{\mathcal{X}_U/U})=0$ \item The image of $\mathrm{deg}_U\colon\mathrm{Pic}_{\mathcal{X}_U/U}(U)\to\mathbb{Z}$ is $m\mathbb{Z}$ when $2\nmid m$ and $(m/2)\mathbb{Z}$ when $2\mid m$.
\end{enumerate}
\subsection{The Leray spectral sequence}\label{Leray} Let us consider the Leray spectrals sequence for $\mathbb{G}_{m,\mathcal{X}_U}$ along $(\mathrm{pr}_1)_U\colon\mathcal{X}_U\to U$. The low degree terms on $E_2$ page fits into exact sequence
$$\xymatrix{0\ar[r]&\frac{\mathrm{Pic}_{\mathcal{X}_U/U}(U)}{\mathrm{Pic}(\mathcal{X}_U)}\ar[r]&\mathrm{Br}(U)\ar[r]&\mathrm{Br}(\mathcal{X}_U)\ar[r]&\mathrm{H}^1(U,\mathrm{Pic}_{\mathcal{X}_U/U})\ar[r]&\mathrm{H}^3(U,\mathbb{G}_m)}$$
The leftmost term is injection because $\mathrm{Pic}(U)=0$.
Now note that Brauer groups of regular schemes and \'etale cohomology group $\mathrm{H}^3_{\mathrm{et}}(U,\mathbb{G}_m)$ are torsion, the long exact sequence consists of torsion abelian groups, and can be calculated on the $l$-primary components:
$$\xymatrix{0\ar[r]&\left(\frac{\mathrm{Pic}_{\mathcal{X}_U/U}(U)}{\mathrm{Pic}(\mathcal{X}_U)}\right)[l^\infty]\ar[r]&\mathrm{Br}(U)[l^\infty]\ar[r]&\mathrm{Br}(\mathcal{X}_U)[l^\infty]\ar[r]&\mathrm{H}^1(U,\mathrm{Pic}_{\mathcal{X}_U/U})[l^\infty]\ar[r]&0}$$
The last term vanishes because by Kummer sequence, $\mathrm{H}^3(U,\mathbb{G}_m)[l^r]$ is a quotient of $\mathrm{H}^3(U,{\mu_{l^r}})$, which equals to $0$ by Artin vanishing. Next we determine the kernel and cokernel of $(\mathrm{pr}_1)_U^*\colon\mathrm{Br}(U)[l^\infty]\to\mathrm{Br}(\mathcal{X}_U)[l^\infty]$ with the help of ramification sequence of Brauer groups.

\subsection{Comparison of the ramification sequences}\label{comparison-section}

For $i=0,1,2$, let 
\begin{enumerate}
\item $E_i=V(a_{i-1},a_{i+1})\cap\mathcal{X}$ be the non-reduced fibers in the family $\mathrm{pr}_1\colon\mathcal{X}\to\mathbb{P}^2_{[a_0:a_1:a_2]}$,
\item $D_i=V(a_i)$ be the inverse image of coordinate axes in $\mathbb{P}^2_{[a_0:a_1:a_2]}$,
\item $F_i=V(X_{i-1},X_{i+1})\cap \mathcal{X}$, they are the singular locus of $D_i$,
\item $D_i^\circ=D_i-(E_{i-1}\cup E_{i+1}\cup F_i)$ be the open part of $D_i$,
\item $L_i=\mathrm{pr}_1(D_i)$, $L_i^\circ=\mathrm{pr}_1(D_i^\circ)$ and $P_i=\mathrm{pr}_1(E_i)$.
\end{enumerate} 
Let $E=\bigcup_{i=0}^2E_i,F=\bigcup_{i=0}^2F_i,
D^\circ=\bigcup_{i=0}^2D_i^\circ,
L^\circ=\bigcup_{i=0}^2L_i^\circ$ and $P=\bigcup_{i=0}^2P_i$
, then $$\mathcal{X}=\mathcal{X}_U\sqcup D^\circ\sqcup( E\cup F),\ \  \mathbb{P}^2_{[a_0:a_1:a_2]}=U\sqcup L^\circ\cup P. $$ 
Now the ramification sequences on $U$ reads
$$\xymatrix{
0\ar[r]&\mathrm{Br}(\mathbb{P}^2-P)[l^\infty]\ar[r]&\mathrm{Br}(U)[l^{\infty}]\ar[r]&\bigoplus_{i=0}^2\mathrm{H}_{\mathrm{et}}^1(L^\circ_i,\mathbb{Q}_l/\mathbb{Z}_l)\\
&\ar[r]&\mathrm{H}_{\mathrm{et}}^3(\mathbb{P}^2-P,\mathbb{Q}_l/\mathbb{Z}_l(1)
)\ar[r]&\mathrm{H}_{\mathrm{et}}^3(U,\mathbb{Q}_l/\mathbb{Z}_l(1))
}$$
and the ramification on $\mathcal{X}_U$ reads
$$\xymatrix{
0\ar[r]&\mathrm{Br}(\mathcal{X}-E\cup F)[l^{\infty}]\ar[r]&\mathrm{Br}(\mathcal{X}_U)[l^{\infty}]\ar[r]&\bigoplus_{i=0}^2\mathrm{H}_{\mathrm{et}}^1(D^\circ_i,\mathbb{Q}_l/\mathbb{Z}_l)\\
&\ar[r]&\mathrm{H}_{\mathrm{et}}^3(\mathcal{X}-E\cup F,\mathbb{Q}_l/\mathbb{Z}_l(1))\ar[r]^-{\alpha}&\mathrm{H}_{\mathrm{et}}^3(\mathcal{X}_U,\mathbb{Q}_l/\mathbb{Z}_l(1)).
}$$

\begin{lemma}We have $\mathrm{Br}(\mathbb{P}^2-P)=0$ and $\mathrm{Br}(\mathcal{X}-E\cup F)=0.$
\end{lemma}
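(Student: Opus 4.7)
The strategy for both assertions is to apply the purity statement of Lemma~\ref{ramificaiton-lemma}(1) and then invoke vanishing of the Brauer group of the ambient smooth rational variety. Since Brauer groups of regular schemes are torsion, it suffices to check $\mathrm{Br}(-)[l^\infty]=0$ for every prime $l$.

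For the first equality this is immediate: $P = P_0 \sqcup P_1 \sqcup P_2 \subset \mathbb{P}^2$ is a disjoint union of three $\mathbb{C}$-points, hence a smooth closed subscheme of pure codimension~$2$. Lemma~\ref{ramificaiton-lemma}(1) identifies $\mathrm{Br}(\mathbb{P}^2 - P)[l^\infty]$ with $\mathrm{Br}(\mathbb{P}^2)[l^\infty]$ for every $l$, and the latter is zero because $\mathrm{Br}(\mathbb{P}^2_{\mathbb{C}})=0$ (e.g.\ by the projective-bundle formula together with $\mathrm{Br}(\mathrm{Spec}\,\mathbb{C})=0$).

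For the second equality I would first identify each irreducible component of $E \cup F$ and verify it has codimension two in the threefold $\mathcal{X}$: each $E_i$ is the reduced fiber $\mathrm{pr}_1^{-1}(P_i)_{\mathrm{red}} \cong \mathbb{P}^1$ (substituting $a_{i-1}=a_{i+1}=0$ reduces the Fermat equation to $X_i^m=0$), while substituting $X_{i-1} = X_{i+1} = 0$ into the Fermat equation forces $a_i = 0$, realizing $F_i$ as a line isomorphic to $L_i \cong \mathbb{P}^1$. Thus $E \cup F \subset \mathcal{X}$ has pure codimension~$2$. The ambient vanishing $\mathrm{Br}(\mathcal{X})=0$ follows from the projective-bundle formula, since $\mathcal{X} = \mathbb{P}(\mathrm{pr}_{2,*}\mathrm{pr}_1^*\mathcal{O}(1)) \to \mathbb{P}^2$ is Zariski-locally trivial and $\mathrm{Br}(\mathbb{P}^2_{\mathbb{C}})=0$.

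The main obstacle is that $E \cup F$ is not globally smooth: $E_i$ and $F_j$ meet in a single point whenever $i \neq j$, producing six singular points of $E \cup F$, so Lemma~\ref{ramificaiton-lemma}(1) does not apply directly with $Z = E \cup F$. I would handle this by a two-step stratification. Let $Z_0 \subset \mathcal{X}$ be the finite set of these six points; applying Lemma~\ref{ramificaiton-lemma}(1) to $Z_0 \subset \mathcal{X}$ (smooth, codimension~$3$, hence condition~(1) applies) gives $\mathrm{Br}(\mathcal{X} - Z_0)[l^\infty] = \mathrm{Br}(\mathcal{X})[l^\infty] = 0$. Then $(E \cup F) \setminus Z_0$ is a smooth closed subscheme of $\mathcal{X} - Z_0$ of pure codimension~$2$, and a second application of Lemma~\ref{ramificaiton-lemma}(1) yields $\mathrm{Br}(\mathcal{X} - (E \cup F))[l^\infty] = 0$ for every $l$, whence $\mathrm{Br}(\mathcal{X} - (E \cup F)) = 0$. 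The only nontrivial point is the bookkeeping of this stratification; no machinery beyond the lemma already quoted and the projective-bundle formula is required.
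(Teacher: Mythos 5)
Your proof is correct and takes essentially the same route as the paper: the paper's own proof is precisely your two-step purity argument, removing $E\cap F$ (the six nodes, codimension $3$) and then the smooth residual curves (codimension $2$) via Lemma~\ref{ramificaiton-lemma}(1), written compactly as $\mathrm{Br}(\mathcal{X}-E\cup F)=\mathrm{Br}(\mathcal{X}-E\cap F)=\mathrm{Br}(\mathcal{X})$. The only cosmetic difference is the final vanishing: the paper invokes stable birational invariance of the Brauer group together with $\mathrm{Br}(\mathbb{C})=0$, while you use the projective bundle formula for $\mathcal{X}\to\mathbb{P}^2$ --- both are standard one-line facts, so the arguments are interchangeable.
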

\begin{proof}Applying Lemma \ref{ramificaiton-lemma}(1), we see that $\mathrm{Br}(\mathbb{P}^2-P)=\mathrm{Br}(\mathbb{P}^2)$ and that $\mathrm{Br}(\mathcal{X}-E\cup F)=\mathrm{Br}(\mathcal{X}-E\cap F)=\mathrm{Br}(\mathcal{X})$. Then note that Brauer group is a stable birational invariant for smooth projective varieties and that $\mathrm{Br}(\mathbb{C})=0$.
\end{proof}

\subsection{Generators of cohomology of punctured spaces and their pullbacks} 
Consider the inclusion $L_i^\circ\subset\mathbb{P}^2-P_{i-1}\cup P_{i+1}$, the ramification sequence gives us canonical isomorphism 
\begin{align*}
\mathrm{H}^1_{\mathrm{et}}(L_i^\circ,\mathbb{Q}_l/\mathbb{Z}_l)&\cong\mathrm{H}^3(\mathbb{P}^2-P_{i+1}\cup P_{i-1},\mathbb{Q}_l/\mathbb{Z}_l(1))\\
&\cong\mathrm{H}^0(P_{i+1}\cup P_{i-1},\mathbb{Q}_l/\mathbb{Z}_l(1))\\
&\cong\mathbb{Q}_l/\mathbb{Z}_l(-1)\left([1_{P_{i+1}}]-[1_{P_{i-1}}]\right)
\end{align*}
we call $[1_{P_{i+1}}]-[1_{P_{i-1}}]$ the canonical generator for $\mathrm{H}^1_{\mathrm{et}}(L_i^\circ,\mathbb{Q}_l/\mathbb{Z}_l).$

For the next two lemmas, let $\widetilde{L_i^\circ}=L_i^\circ$ and let $(\mathrm{pr}_1)_{L_i^\circ}\colon \widetilde{L_i^\circ}\to {L_i^\circ}$ be the map induced by $$[a_0:a_1:a_2]\mapsto [a_0^m:a_1^m:a_2^m].$$ \begin{lemma}\label{pullback-cyclic}The pullback map induces multiplication by $m$ on $\mathrm{H}^1_{\mathrm{et}}(L_i^\circ,\mathbb{Q}_l/\mathbb{Z}_l)$: $$(\mathrm{pr}_1)_{L_i^\circ}\colon\mathrm{H}_{\mathrm{et}}^1(L_i^\circ,\mathbb{Q}_l/\mathbb{Z}_l)\to\mathrm{H}^1_{\mathrm{et}}(L_i^\circ,\mathbb{Q}_l/\mathbb{Z}_l),\ \  a\mapsto m\cdot a.$$
\end{lemma}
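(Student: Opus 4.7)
The plan is to exploit an explicit identification of $L_i^\circ$ with $\mathbb{G}_m$ under which the map $(\mathrm{pr}_1)_{L_i^\circ}$ becomes the $m$-th power endomorphism, and then to read off the pullback directly from Kummer theory.

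By the symmetry of the three coordinates, it suffices to handle $i=0$. The line $L_0=V(a_0)\subset\mathbb{P}^2$ is a $\mathbb{P}^1$ with homogeneous coordinates $[a_1:a_2]$, and the two punctures $P_1,P_2$ are precisely the coordinate points of this $\mathbb{P}^1$. Hence the rational function $t=a_1/a_2$ identifies $L_0^\circ$ with $\mathrm{Spec}\,\mathbb{C}[t,t^{-1}]=\mathbb{G}_m$. Under this identification the restriction of $[a_0:a_1:a_2]\mapsto[a_0^m:a_1^m:a_2^m]$ becomes exactly the endomorphism $t\mapsto t^m$.

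Next I would invoke the Kummer sequence on $\mathbb{G}_m$: since $\mathrm{Pic}(\mathbb{G}_m)=0$, $\mathcal{O}^*(\mathbb{G}_m)=\mathbb{C}^*\cdot t^{\mathbb{Z}}$, and $\mathbb{C}^*$ is $l^r$-divisible, one obtains
$$\mathrm{H}^1_{\mathrm{et}}(\mathbb{G}_m,\mu_{l^r})\;\cong\;\mathcal{O}^*(\mathbb{G}_m)/\mathcal{O}^*(\mathbb{G}_m)^{l^r}\;\cong\;\mathbb{Z}/l^r\mathbb{Z},$$
with the class $[t]$ corresponding to the generator $1$. The pullback along $t\mapsto t^m$ then sends $[t]$ to $[t^m]=m\cdot[t]$, and so is multiplication by $m$ at each finite level. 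Passing to the direct colimit over $r$ produces multiplication by $m$ on $\mathrm{H}^1_{\mathrm{et}}(\mathbb{G}_m,\mathbb{Q}_l/\mathbb{Z}_l)$, and transporting back through the identification $L_i^\circ\cong\mathbb{G}_m$ gives the statement of the lemma.

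The argument is essentially a coordinate computation once the explicit identification is in place; I do not anticipate any substantive obstacle. In particular, since we are working over $\mathbb{C}$ no Tate-twist issue obstructs the comparison between $\mu_{l^r}$ and $\mathbb{Z}/l^r\mathbb{Z}$ coefficients, and since pullback is intrinsically defined the conclusion $a\mapsto m\cdot a$ does not depend on the auxiliary choice of identification.
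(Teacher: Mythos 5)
Your proof is correct. It rests on the same geometric observation as the paper's proof, namely that $L_i^\circ\cong\mathbb{G}_m$ and that $(\mathrm{pr}_1)_{L_i^\circ}$ becomes the power map $t\mapsto t^m$ under this identification; the difference lies only in how the effect on $\mathrm{H}^1$ is then extracted. The paper does it in one line via the identification $\mathrm{H}^1_{\mathrm{et}}(L_i^\circ,\mathbb{Q}_l/\mathbb{Z}_l)\cong\mathrm{Hom}_{\mathrm{cont}}(\pi_1^{\mathrm{et}}(L_i^\circ),\mathbb{Q}_l/\mathbb{Z}_l)$, using that the power map induces multiplication by $m$ on $\pi_1^{\mathrm{et}}(\mathbb{G}_m)\cong\widehat{\mathbb{Z}}$, whereas you run the Kummer sequence and track the class of the unit $t$ in $\mathcal{O}^*(\mathbb{G}_m)/(\mathcal{O}^*(\mathbb{G}_m))^{l^r}$, then pass to the colimit over $r$. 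Both are complete and elementary; the paper's is shorter, while yours has the mild advantage of exhibiting an explicit generator $[t]$ at each finite level, which sits well with the ``canonical generator'' $[1_{P_{i+1}}]-[1_{P_{i-1}}]$ used in the surrounding comparison of ramification sequences, and your closing remarks (that the Tate twist is harmless over $\mathbb{C}$ and that the conclusion is independent of the chosen coordinate) correctly dispose of the only points where care is needed.
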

\begin{proof} Note that $\mathrm{H}^1_{\mathrm{et}}(L_i^\circ,\mathbb{Q}_l/\mathbb{Z}_l)\cong \mathrm{Hom}(\pi_{1}^{\mathrm{et}}(L_i^\circ),\mathbb{Q}_l/\mathbb{Z}_i)$ and that the induced map on \'etale fundmamental group is multiplication by $m$.
\end{proof}
\begin{lemma}
The scheme $D_i^\circ$ is canonically isomorphic to $\widetilde{L_i^\circ}\times\mathbb{A}^1$ as schemes over $L_i^\circ$, and the following map induced by pullback is multiplication by $m$: $$(\mathrm{pr}_1)_{L_i^\circ}^*\colon\mathrm{H}^1_{\mathrm{et}}(L_i^\circ,\mathbb{Q}_l/\mathbb{Z}_l)\to\mathrm{H}^1_{\mathrm{et}}(D_i^\circ,\mathbb{Q}_l/\mathbb{Z}_l)\cong\mathrm{H}^1_{\mathrm{et}}(\widetilde{L_i^\circ}\times\mathbb{A}^1,\mathbb{Q}_l/\mathbb{Z}_l)\cong\mathrm{H}_{\mathrm{et}}^1(L_i^\circ,\mathbb{Q}_l/\mathbb{Z}_l).$$ 
\end{lemma}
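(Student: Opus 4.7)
The plan is to treat the case $i=0$ explicitly; the cases $i=1,2$ follow by the symmetry permuting the coordinates $a_0, a_1, a_2$. First I would pin down the geometry of $D_0^\circ$. By definition $D_0\subset L_0\times\mathbb{P}^2_{[X_0:X_1:X_2]}$ is cut out by $a_1X_1^m+a_2X_2^m=0$. Over $L_0^\circ$ we have $a_1,a_2\neq 0$, and removing $F_0=V(X_1,X_2)\cap\mathcal{X}$ forces $X_1,X_2\neq 0$ at every point of $D_0^\circ$: if one of them vanishes at a point of the zero locus, the defining equation forces the other to vanish too. Hence the functions $t:=a_2/a_1$, $X_1/X_2$, and $X_0/X_2$ are all regular on $D_0^\circ$, and satisfy $(X_1/X_2)^m=-t$.

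Next I would construct an explicit isomorphism. Fix $\omega_0\in\mathbb{C}$ with $\omega_0^m=-1$ and define
\[
\phi\colon\widetilde{L_0^\circ}\times\mathbb{A}^1\to D_0^\circ,\quad (s,\mu)\mapsto\bigl([0:1:s^m],\ [\mu:\omega_0 s:1]\bigr).
\]
A direct substitution confirms $\phi(s,\mu)\in D_0^\circ$, and $\phi$ covers the map $(\mathrm{pr}_1)_{L_0^\circ}\colon\widetilde{L_0^\circ}\to L_0^\circ$. The candidate inverse
\[
\phi^{-1}\colon D_0^\circ\to\widetilde{L_0^\circ}\times\mathbb{A}^1,\quad ([0:a_1:a_2],[X_0:X_1:X_2])\mapsto\left(\tfrac{X_1}{\omega_0 X_2},\ \tfrac{X_0}{X_2}\right)
\]
is a regular morphism because $X_2\neq 0$ on $D_0^\circ$, and one checks (using $\omega_0^m=-1$ together with $a_1X_1^m=-a_2X_2^m$) that the two compositions are the identity.

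For the cohomology claim, the structure morphism $D_0^\circ\to L_0^\circ$ factors via $\phi^{-1}$ as $\widetilde{L_0^\circ}\times\mathbb{A}^1\xrightarrow{\mathrm{pr}}\widetilde{L_0^\circ}\xrightarrow{(\mathrm{pr}_1)_{L_0^\circ}}L_0^\circ$. The pullback along the first arrow is an isomorphism on $\mathrm{H}^1_{\mathrm{et}}(-,\mathbb{Q}_l/\mathbb{Z}_l)$ by $\mathbb{A}^1$-homotopy invariance of étale cohomology with torsion coefficients, while the pullback along the second arrow is multiplication by $m$ by Lemma \ref{pullback-cyclic}. Composing the two yields the assertion.

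The only mild concerns I anticipate are (a) verifying that $D_0^\circ$ is smooth and irreducible, which follows from the explicit product decomposition once $\phi$ is in hand, and (b) keeping straight the sign conventions in the choice of $\omega_0$. Point (b) is not substantive: a different choice alters $\phi$ by composition with a deck transformation of $\widetilde{L_0^\circ}\to L_0^\circ$, which does not change the pullback on cohomology. Neither issue presents a genuine obstacle.
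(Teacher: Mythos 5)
Your proof is correct and follows essentially the same route as the paper's: an explicit isomorphism $D_i^\circ\cong\widetilde{L_i^\circ}\times\mathbb{A}^1$ over $L_i^\circ$ built from a chosen $m$-th root of $-1$, followed by $\mathbb{A}^1$-homotopy invariance and Lemma \ref{pullback-cyclic}. (The paper works with $i=2$ instead of $i=0$ and its displayed formulas have the domains and codomains of $\alpha,\beta$ transposed; your version states the maps cleanly and verifies the same identities.)
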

\begin{proof}
Without loss of generality, let $i=2$. Let us pick $\zeta\in\mathbb{C}$ so that $\zeta^m=-1$. Then we define the morphism $\alpha\colon \widetilde{L_2^\circ}\times\mathbb{A}^1\to D_2^\circ$ over $L_2^\circ$, given by $([a_0:a_1:0],[X_0:X_1:X_2])\mapsto ([\zeta X_1:X_0:0], X_1/X_2).$ The inverse isomorphism $\beta\colon D_2^\circ\to \widetilde{L_2^\circ}\times\mathbb{A}^1$ is given by $([a_0:a_1:0],t)\mapsto([a_0^m:a_1^m:0],[\zeta a_1:a_0:a_0t^{-1}]).$ One directly checks that the morphism $\alpha,\beta$ are compatible with the morphisms to $L_2^\circ$. Then we conclude by Lemma \ref{pullback-cyclic}
\end{proof}

By the Gysin sequence for embedding $\iota_P\colon P\hookrightarrow\mathbb{P}^2$, we have short exact sequence 
$$\xymatrix{0\ar[r]&\mathrm{H}^3_{\mathrm{et}}(\mathbb{P}^2-P,\mathbb{Q}_l/\mathbb{Z}_l)\ar[r]&\mathrm{H}_{\mathrm{et}}^0(P,\mathbb{Q}_l/\mathbb{Z}_l(-1))\ar[r]^{\iota_{P,*}}&\mathrm{H}_{\mathrm{et}}^4(\mathbb{P}^2,\mathbb{Q}_l/\mathbb{Z}_l(1))}$$
Note that $\mathrm{H}^0_{\mathrm{et}}(P,\mathbb{Q}_l/\mathbb{Z}_l(-1))\cong\bigoplus_{i=0}^2\mathbb{Q}_l/\mathbb{Z}_l(-1)[1_{P_i}]$ where $1_{P_i}$ is the indicator function on $P_i$, then \begin{align*}
\mathrm{H}^3_{\mathrm{et}}(\mathbb{P}^2-P,\mathbb{Q}_l/\mathbb{Z}_l)&=\mathrm{ker}(\iota_{P,*})\cong\mathbb{Q}_l/\mathbb{Z}_l(-1)^{\oplus2}\\
&\cong\mathbb{Q}_l/\mathbb{Z}_l(-1)\left([1_{P_2}]-[1_{P_0}]\right)\oplus \mathbb{Q}_l/\mathbb{Z}_l\left([1_{P_1}]-[1_{P_0}]\right)\end{align*}
Similarly, running the Gysin sequence on $\mathcal{X}-E\cup F$, we see that \begin{align*}
\mathrm{H}^3(\mathcal{X}-E\cup F,\mathbb{Q}_l/\mathbb{Z}_l(1))&\cong\mathbb{Q}_l/\mathbb{Z}_l(-1)^{\oplus4}
\\
&\cong\bigoplus_{i=1}^2\mathbb{Q}_l/\mathbb{Z}_l(-1)([1_{E_i^\circ}]-[1_{E_0^\circ}])\oplus\bigoplus_{i=1}^2\mathbb{Q}_l/\mathbb{Z}_l(-1)([1_{F_i^\circ}]-[1_{F_0^\circ}])
\end{align*} is identified under the Gysin map as a direct summand of \begin{align*}
\mathrm{H}^0(E\cup F-E\cap F,\mathbb{Q}_l/\mathbb{Z}_l(-1))&\cong \mathbb{Q}_l/\mathbb{Z}_l(-1)^{\oplus6}
\\
&\cong\bigoplus_{i=0}^2\mathbb{Q}_l/\mathbb{Z}_l(-1)[1_{E_i^\circ}]\oplus\bigoplus_{i=0}^2\mathbb{Q}_l/\mathbb{Z}_l(-1)[1_{F_i^\circ}]
\end{align*}

Now we compare the ramification sequences in Section \ref{comparison-section}, and get the following diagram:

$$\xymatrix{
    0\ar[r]&\mathrm{Br}(U)[l^\infty]\ar[d]^{(\mathrm{pr}_1)_U^*}\ar[r]&\bigoplus_{i=0}^2\mathrm{H}^1(L_i^\circ,\mathbb{Q}_l/\mathbb{Z}_l)\ar[d]\ar[r]&\mathrm{H}^3(\mathbb{P}^2-P,\mathbb{Q}_l/\mathbb{Z}_l(1))\ar[d]\ar[r]&0\\
    0\ar[r]&\mathrm{Br}(\mathcal{X}_U)[l^{\infty}]\ar[r]&\bigoplus_{i=0}^2\mathrm{H}^1(D_i^\circ,\mathbb{Q}_l/\mathbb{Z}_l)\ar[r]&\mathrm{H}^3(\mathcal{X}-E\cup F,\mathbb{Q}_l/\mathbb{Z}_l(1))
}$$
\begin{lemma}\label{surjection}
The pullback map $(\mathrm{pr}_1)_U^*\colon\mathrm{Br}(U)\to\mathrm{Br}(\mathcal{X}_U)$ is surjective with kernel  $\mu_m^{-1}$.
\end{lemma}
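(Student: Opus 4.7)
The plan is to apply the snake lemma to the displayed $3\times 3$ diagram. The top row is short exact (the leftmost $0$ is $\mathrm{Br}(\mathbb{P}^2-P)=0$ from the previous lemma, and the rightmost $0$ comes from Artin vanishing $\mathrm{H}^3_{\mathrm{et}}(U,\mathbb{Q}_l/\mathbb{Z}_l(1))=0$ on the affine $U$), while the bottom row is left-exact. Once we have shown $\mathrm{coker}\,f_2=0$ the snake lemma will yield
\[
0 \longrightarrow \ker f_1 \longrightarrow \ker f_2 \xrightarrow{\phi} \ker f_3 \longrightarrow \mathrm{coker}\,f_1 \longrightarrow 0,
\]
where $f_i$ is the $i$-th vertical arrow and $\phi$ is the restriction of the top horizontal. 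So the remaining work is to identify $f_2$, $f_3$, and $\phi$, and finally to sum over primes $l$, using that Brauer groups are torsion and decompose as $\bigoplus_l(\cdot)[l^\infty]$.

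By the preceding lemma each component of $f_2$ is multiplication by $m$ on $\mathbb{Q}_l/\mathbb{Z}_l(-1)$, so $\mathrm{coker}\,f_2=0$ and $\ker f_2$ is the $l$-primary part of $(\mu_m^{-1})^{\oplus 3}$. For $f_3$, the essential geometric input is that $\mathrm{pr}_1$ is flat and the scheme-theoretic fibre $\mathrm{pr}_1^{-1}(P_i)$ is the $m$-fold thickening of the reduced line $E_i^{\mathrm{red}}$ (the Fermat equation over $P_0$ collapses to $a_0X_0^m=0$, and similarly for $P_1,P_2$). Compatibility of the Gysin map with flat pullback then forces $\mathrm{pr}_1^*[1_{P_i}]=m[1_{E_i^\circ}]$, with zero component along the $F$-summands; hence $\ker f_3$ is the $l$-primary part of $(\mu_m^{-1})^{\oplus 2}$.

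Finally, I would compute the integer matrix of the top horizontal $A_2\to A_3$ in the bases $v_i=[1_{P_{i+1}}]-[1_{P_{i-1}}]$ and $w_j=[1_{P_j}]-[1_{P_0}]$: direct inspection gives $v_0\mapsto w_1-w_2$, $v_1\mapsto w_2$, $v_2\mapsto -w_1$, so the matrix is $M=\begin{pmatrix}1&0&-1\\-1&1&0\end{pmatrix}$, which is surjective over $\mathbb{Z}$ with kernel $\mathbb{Z}\cdot(1,1,1)$. Reducing mod $m$, the restriction $\phi$ remains surjective, with cyclic kernel equal to the $l$-primary part of $\mu_m^{-1}$. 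Feeding this into the snake sequence gives $\mathrm{coker}\,f_1=0$ and $\ker f_1$ equal to the $l$-primary part of $\mu_m^{-1}$. Summing over all primes $l$ yields $\ker(\mathrm{pr}_1)_U^*=\mu_m^{-1}$ and surjectivity of $(\mathrm{pr}_1)_U^*$. The only step beyond formal diagram chasing is the Gysin-pullback identification of $f_3$: the multiplicity-$m$ appearing there is precisely what is responsible for the $m$-torsion kernel in the end.
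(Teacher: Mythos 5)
Your proposal is correct and its skeleton is the same as the paper's: compare the two ramification sequences, apply the snake lemma, and finish by an explicit check on generators (your matrix $M=\begin{pmatrix}1&0&-1\\-1&1&0\end{pmatrix}$ computation is precisely the verification the paper compresses into ``the middle map is surjection, as one checks on the generators''). The one step where you genuinely diverge is the identification of $f_3$. The paper never computes $(\mathrm{pr}_1)^*$ on $\mathrm{H}^3$ directly: since the top row is right exact, every class in $\mathrm{H}^3(\mathbb{P}^2-P,\mathbb{Q}_l/\mathbb{Z}_l(1))$ is $\iota_1(x)$ for some $x$, and commutativity of the right-hand square together with the known middle map (multiplication by $m$) \emph{forces} $f_3(\iota_1(x))=\iota_2(mx)$; the only functoriality this uses is compatibility of the localization sequences along the divisors $D_i^\circ\to L_i^\circ$, which are reduced of multiplicity one, so no ramification subtlety arises. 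You instead compute $f_3$ from the geometry of the fibres, $\mathrm{pr}_1^{-1}(P_i)=mE_i^{\mathrm{red}}$, invoking compatibility of Gysin/residue maps with flat pullback. Your conclusion agrees (and incidentally confirms that the paper's formula ``$m([1_{F_i}]-[1_{F_0}])$'' is a typo for $m([1_{E_i^\circ}]-[1_{E_0^\circ}])$), but be aware that the compatibility you cite --- residues of pulled-back classes acquire the multiplicity of the scheme-theoretic preimage, here non-reduced of multiplicity $m$ in codimension $2$, and vanish along the $F$-components because the pulled-back class extends across $F^\circ$ --- is a genuine extra input (the higher-codimension analogue of the ramification-index formula for residues) that should be stated precisely or given a reference; it is true, as one can check in the local model $(x,y,z)\mapsto(-x^m-yz^m,\,y)$, where the induced map on $\mathrm{H}^3$ of the punctured spaces has degree $m$. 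So both routes are sound: yours is more self-contained geometrically but carries the burden of justifying the multiplicity-$m$ Gysin compatibility, whereas the paper's commutativity trick makes that step purely formal.
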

\begin{proof}
Since Brauer groups are torsion, it suffices to calculate the $l$-primary parts. By previous discussion, we can rewrite the commutative diagram as 
$$\xymatrix{
    0\ar[r]&\mathrm{Br}(U)[l^{\infty}]\ar[r]\ar[d]^{(\mathrm{pr}_1)_U^*[l^\infty]}&\left(\mathbb{Q}_l/\mathbb{Z}_l(-1)\right)^{\oplus3}\ar[r]^{\iota_1}\ar[d]^{m}&(\mathbb{Q}_l/\mathbb{Z}_l(-1))^{\oplus2}\ar[d]^{m\oplus 0}\ar[r]&0\\
    0\ar[r]&\mathrm{Br}(\mathcal{X}_U)[l^{\infty}]\ar[r]&\left(\mathbb{Q}_l/\mathbb{Z}_l(-1)\right)^{\oplus3}\ar[r]^{\iota_2}&(\mathbb{Q}_l/\mathbb{Z}_l(-1))^{\oplus4}
}$$
The Gysin map $\iota_1$ sends  the canonical generator of $\mathrm{H}^1(L_i^\circ,\mathbb{Q}_l/\mathbb{Z}_l)$ to $[1_{P_{i+1}}]-[1_{P_{i-1}}]$, and the Gysin map $\iota_2$ sends the canonical generator of $\mathrm{H}^1(D_i^\circ,\mathbb{Q}_l/\mathbb{Z}_l)\cong\mathrm{H}^1(\widetilde{L}_i^\circ\times\mathbb{A}^1,\mathbb{Q}_l/\mathbb{Z}_l)$ to $[1_{E_{i+1}^\circ}]-[1_{E_{i-1}^\circ}]$.
The rightmost vertical map $m\oplus0$ sends the generator $[1_{P_i}]-[1_{P_0}]$ to $m([1_{F_i}]-[1_{F_0}])$ by commutativity of the diagram. By snake lemma, we get long exact sequence
$$\xymatrix{0\ar[r]&(\mathrm{ker}(\mathrm{pr}_1)_U^*)[l^\infty]\ar[r]&\mu_{(m,l^\infty)}^{\oplus3}\ar[r]&\mu_{(m,l^\infty)}^{\oplus2}\ar[r]&(\mathrm{coker}(\mathrm{pr}_1)_U^*)[l^\infty]\ar[r]&0}$$
The middle map is surjection, as one checks on the generators. We conclude by summing up the $l$-primary parts.
\end{proof}

Now from the above discussion we conclude that
\begin{proposition}\label{vanishing-whole-torsor}
The \'etale cohomology group $\mathrm{H}^1_{\mathrm{et}}(U,\mathrm{Pic}_{\mathcal{X}_U/U})=0$.
\end{proposition}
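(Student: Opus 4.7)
The plan is to combine the four-term exact sequence coming from the Leray spectral sequence of Section \ref{Leray} with the surjectivity statement in Lemma \ref{surjection}, essentially just reading off the conclusion. Concretely, for each prime $l$, Section \ref{Leray} already provides the exact sequence
$$0\to\left(\frac{\mathrm{Pic}_{\mathcal{X}_U/U}(U)}{\mathrm{Pic}(\mathcal{X}_U)}\right)[l^\infty]\to\mathrm{Br}(U)[l^\infty]\xrightarrow{(\mathrm{pr}_1)_U^*}\mathrm{Br}(\mathcal{X}_U)[l^\infty]\to\mathrm{H}^1(U,\mathrm{Pic}_{\mathcal{X}_U/U})[l^\infty]\to0,$$
which already identifies $\mathrm{H}^1(U,\mathrm{Pic}_{\mathcal{X}_U/U})[l^\infty]$ with the cokernel of the pullback on $l$-primary Brauer groups. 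Lemma \ref{surjection} asserts that $(\mathrm{pr}_1)_U^*\colon\mathrm{Br}(U)\to\mathrm{Br}(\mathcal{X}_U)$ is surjective, and this surjectivity passes to each $l$-primary component, so $\mathrm{H}^1(U,\mathrm{Pic}_{\mathcal{X}_U/U})[l^\infty]=0$ for every prime $l$.

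To deduce vanishing of the full group from vanishing on each $l$-primary part, I would note that $\mathrm{H}^1(U,\mathrm{Pic}_{\mathcal{X}_U/U})$ is torsion: it is the cokernel of a map between the torsion groups $\mathrm{Br}(\mathcal{X}_U)$ and $\mathrm{H}^3_{\mathrm{et}}(U,\mathbb{G}_m)$, as the author has already pointed out in Section \ref{Leray}. Hence the primary decomposition $\bigoplus_l \mathrm{H}^1(U,\mathrm{Pic}_{\mathcal{X}_U/U})[l^\infty]$ yields the full vanishing claimed by the proposition. There is no real obstacle here; all of the substantive work has been done in Section \ref{comparison-section} and Lemma \ref{surjection}, and this proposition is just the formal packaging of those inputs into the Leray exact sequence.
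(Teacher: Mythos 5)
Your proof is correct and is essentially the paper's own argument: the paper's proof of this proposition is literally ``use the long exact sequence in Section \ref{Leray} and Lemma \ref{surjection},'' i.e.\ exactly the $l$-primary four-term sequence plus surjectivity of $(\mathrm{pr}_1)_U^*$, followed by primary decomposition of the torsion group $\mathrm{H}^1(U,\mathrm{Pic}_{\mathcal{X}_U/U})$. One small slip in your justification of torsionness: $\mathrm{H}^1(U,\mathrm{Pic}_{\mathcal{X}_U/U})$ is not a cokernel of a map between $\mathrm{Br}(\mathcal{X}_U)$ and $\mathrm{H}^3_{\mathrm{et}}(U,\mathbb{G}_m)$, but rather an extension of a subgroup of the torsion group $\mathrm{H}^3_{\mathrm{et}}(U,\mathbb{G}_m)$ by a quotient of the torsion group $\mathrm{Br}(\mathcal{X}_U)$ --- which is the fact Section \ref{Leray} already records, so your conclusion is unaffected.
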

\begin{proof}
Use the long exact sequence in Section \ref{Leray} and Lemma \ref{surjection}. 
\end{proof}

\subsection{Period of the torsors}
Recall that the Leray spectral sequence give us exact sequence $$\xymatrix{
0\ar[r]&\mathrm{Pic}(\mathcal{X}_U)\to\mathrm{Pic}_{\mathcal{X}_U/U}(U)\ar[r]^-{\mathrm{d}_2^{0,1}}&\mathrm{Br}(U)\ar[r]&\mathrm{Br}(\mathcal{X}_U)
}.$$
Since $\mathcal{X}$ is a projective bundle, $\mathrm{Pic}(\mathcal{X})\cong\mathbb{Z}\cdot\mathrm{pr}_1^*\mathcal{O}(1)\oplus\mathbb{Z}\cdot\mathrm{pr}_2^*\mathcal{O}(1)$. By excision sequence of class groups, one conclude that $\mathrm{Pic}(\mathcal{X}_U)\cong\mathbb{Z}\cdot\mathrm{pr}_2^*\mathcal{O}(1).$  
Together with Lemma \ref{surjection}, we rewrite the previous sequence as 
\begin{equation}\xymatrix{0\ar[r]&\mathbb{Z}\cdot\mathrm{pr}_2^*\mathcal{O}(1)\ar[r]&\mathrm{Pic}_{\mathcal{X}_U/U}(U)\ar[r]^-{\mathrm{d}_{2}^{0,1}}&\mathrm{\mu}_m^{-1}\ar[r]&0}\label{eqn1}\end{equation}
Recall we also have the following naive short exact sequence
\begin{equation}\xymatrix{0\ar[r]&\mathrm{Pic}^0_{\mathcal{X}_U/U}(U)\ar[r]&\mathrm{Pic}_{\mathcal{X}_U/U}(U)\ar[r]^-{\mathrm{deg}_U}&\mathrm{\mathbb{Z}}}\label{eqn2}\end{equation}

\begin{lemma}\label{invariants}The sections to the $m$-torsion subscheme of the Picard scheme is
$$\mathrm{Pic}^0_{\mathcal{X}_U/U}[m](U)=\left\{\begin{array}{cc}
\mu_m ,& 2\nmid m  \\
    \mu_{\frac{m}{2}}, & 2\mid m 
\end{array}\right.$$
\end{lemma}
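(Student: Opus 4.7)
The scheme $\mathrm{Pic}^0_{\mathcal{X}_U/U}[m]$ is finite étale over the connected base $U$, so its sections are the $\pi_1^{\mathrm{et}}(U)$-invariants of its geometric generic fiber $\mathrm{Pic}^0(F_m)[m]$. By the isotrivialization worked out in Section 3.1, this $\pi_1$-action factors through the deck group $\mathrm{Aut}_U(\widetilde{U})\cong \mu_m^{\oplus 2}$, acting via the monodromy homomorphism $\rho$ of Lemma \ref{mondromy-action}. The first step of the plan is therefore to identify
$$
\mathrm{Pic}^0_{\mathcal{X}_U/U}[m](U)\;\cong\;\mathrm{Pic}^0(F_m)[m]^{\mu_m^{\oplus 2}}.
$$

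Next I would invoke the explicit description of the $\mathrm{Aut}(F_m)$-action on $H_1(F_m,\mathbb{Z}/m)\cong\mathrm{Pic}^0(F_m)[m]$ provided by \cite{Galois-action-homology-MR3596577}, together with the embedding $\mu_m^{\oplus 2}\hookrightarrow \mathrm{Aut}(F_m)$ spelled out in Lemma \ref{mondromy-action}. This reduces the problem to the concrete linear-algebra task of computing the simultaneous kernel of the two commuting $\mathbb{Z}/m$-linear endomorphisms $\rho(\sigma_0)-1$ and $\rho(\sigma_1)-1$ on the free $\mathbb{Z}/m$-module of rank $2g=(m-1)(m-2)$.

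I expect the dichotomy between $\mu_m$ (odd $m$) and $\mu_{m/2}$ (even $m$) to fall out of a single parity obstruction: the integer $m(m-1)/2$ vanishes modulo $m$ when $m$ is odd but equals $m/2$ modulo $m$ when $m$ is even, which halves the invariant subgroup in the even case. The nontrivial invariant class in the even case should correspond to the ``theta characteristic'' of $\mathrm{pr}_2^*\mathcal{O}(1)$ foreshadowed in the introduction; concretely, one recovers it as the class of a degree-zero divisor built from the $3m$ distinguished points $\mathcal{X}\cap V(X_i,X_j)$ cut out by coordinate axes, together with a rational function whose divisor realizes $m$-torsionness.

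The main obstacle is that the group ring $\mathbb{Z}/m[\mu_m^{\oplus 2}]$ is not semisimple, since its order $m^2$ is zero in $\mathbb{Z}/m$. In particular one cannot simply read the invariants off the classical Rohrlich character decomposition of $H_1(F_m,\mathbb{C})$, whose trivial isotypic component is zero---if that naïve reading were valid we would wrongly conclude that there are no invariants at all. The integral computation must be done honestly, carefully tracking the non-split extensions between $\mathbb{Z}/m$-characters, which are exactly what produce the cyclic invariant subgroup of order $m$ or $m/2$.
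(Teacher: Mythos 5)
Your proposal is correct and takes essentially the same route as the paper: the paper likewise reduces sections of $\mathrm{Pic}^0_{\mathcal{X}_U/U}[m]$ to the $\mu_m^{\oplus2}$-monodromy invariants of $\mathrm{H}_1(F_m,\mathbb{Z}/m\mathbb{Z})$ via the isotrivialization of Lemma \ref{mondromy-action}, invokes \cite{Galois-action-homology-MR3596577} for the explicit description of the action (row/column-sum-zero matrices modulo circulants), and then does the integral linear algebra, arriving at precisely your predicted obstruction $\frac{m(m-1)}{2}d=0$ in $\mathbb{Z}/m\mathbb{Z}$, which is vacuous for odd $m$ and forces $d\in2\mathbb{Z}/m\mathbb{Z}$ for even $m$. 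Your caveat about non-semisimplicity is also borne out: the paper never uses a character decomposition but works honestly with the $(\mathbb{Z}/m\mathbb{Z})[t_0,t_1]/(t_0^m-1,t_1^m-1)$-module presentation.
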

\begin{proof}
Consider the Kummer sequence on $\mathcal{X}_U$. Long exact sequence of direct images gives $$\mathrm{H}^0(U,R^1(\mathrm{pr}_{1})_{U,*}\mu_m)\cong\mathrm{Pic}_{\mathcal{X}_U/U}[m](U).$$
Therefore it suffices to calculate the invariants of the local system $R^1(\mathrm{pr}_{1})_{U,*}\mu_m$. Since the family is isotrival, the left hand side is the same as invariants of the monodromy action on the standard Fermat curve $\mathrm{H}^1(F_m,\mu_m)\cong\mathrm{H}_1(F_m,\mathbb{Z}/n\mathbb{Z})\otimes\mu_n$. Recall that the $\mu_m\times\mu_m$ action on $\mathrm{H}_1(F_m,\mathbb{Z}/n\mathbb{Z})$ is given as follows:
\begin{enumerate}
\item Let 
$F_m=V(X_0^m+X_1^m+X_2^m)\subset\mathbb{P}^2$ be the standard Fermat curve. Let $F_m^\circ=F_m\backslash V(X_2)$ and $F_m^{\circ\circ}=F_m\backslash V(X_0X_1X_2)$ be the complement of coordinate axes. Then by \cite[Proposition 4]{Galois-action-homology-MR3596577}, the relative homology $H_1(F_m^\circ,F_m^\circ\backslash F_m^{\circ\circ};\mathbb{Z}/m\mathbb{Z})$ is canonically isomorphic to a rank one $(\mathbb{Z}/m\mathbb{Z})[t_0,t_1]/(t_0^m-1,t_1^m-1)$-module, therefore any element can be identified with $\sum_{0\leq i,j\leq m-1}a_{ij}t_0^it_1^j$, or a $m\times m$ matrix $A=(a_{ij})_{m\times m}$. Exact sequence of relative homology gives us embedding $\mathrm{H}_1(F_m^\circ,\mathbb{Z}/m\mathbb{Z})\subseteq\mathrm{H}_1(F_m^\circ,F_m^\circ\backslash F_m^{\circ\circ};\mathbb{Z}/m\mathbb{Z})$,
an element come from $\mathrm{H}_1(F_m^\circ,\mathbb{Z}/m\mathbb{Z})$ if and only if the rows and columns sum up to zero, i.e. $$\mathrm{H}_1(F_m^\circ,\mathbb{Z}/m\mathbb{Z})\cong \{(1,\cdots,1)_{1\times m}A_{m\times m}=
(1,\cdots,1)_{1\times m}A_{m\times m}^T=\mathbf{0}_{1\times m}\}
$$
Since the basis consists of $t_0^it_1^j$, multiplication by $(t_0,1),(1,t_1)\in\mu_m\times\mu_m$ corresponds to left and right multiplication of a matrix by $J=\left(\begin{array}{cccc}
    0 & I_{m-1}\\
1&\mathbf{0}_{1\times(m-1)}
\end{array}\right)$.
\item The homology of $F_m$ is calculated by exact sequence of relative homology $$\xymatrix{\mathrm{H}_2(F_m,F_m^\circ;\mathbb{Z}/m\mathbb{Z})\ar[r]^-{D}&\mathrm{H}_1(F_m^\circ,\mathbb{Z}/m\mathbb{Z})\ar[r]&\mathrm{H}_1(F_m,\mathbb{Z}/m\mathbb{Z})\ar[r]&0}.$$
By \cite[Proposition 5]{Galois-action-homology-MR3596577}, the image of $D$ consist of all elements such that $a_{i+1,j+1}=a_{ij}$, where the indices $i,j$ take sum in $\mathbb{Z}/m\mathbb{Z}$, under the previous identification of $\mathrm{H}_1(F_m^\circ,\mathbb{Z}/m\mathbb{Z})=0$ with matrices: $$\mathrm{Im}(D)=\{a_0I+a_1J+\cdots+a_{m-1}J^{m-1}|a_0+\cdots+a_{m-1}=0\}.$$
\end{enumerate}
Our goal is to determine the group of $\mu_m\times\mu_m$-invariants of $\mathrm{H}_1(F_m,\mathbb{Z}/m\mathbb{Z})=\mathrm{H}_1(F_m^\circ,\mathbb{Z}/m\mathbb{Z})/\mathrm{Im}(D)$.
Now for any element in $\mathrm{H}_1(F_m,\mathbb{Z}/m\mathbb{Z})$ represented by matrix $A=(a_{ij})$, suppose that it is $\mu_m\times\mu_m$-invariant, then $(I-J)A,A(I-J)\in\mathrm{Im}(D)$. Explicitly writing down the equations, we see that modulo $\mathrm{Im}(D)$, the matrix $A$ is determined by the diagonal terms $a_{00},\cdots,a_{m-1,m-1}$, subject to the relation that $a_{ii}-a_{i+1,i+1}$ are all equal to some fixed $d\in\mathbb{Z}/m\mathbb{Z}$. The condition $(1,\cdots,1)A=0$ turns out to be $\frac{m(m-1)}{2}d=0\in\mathbb{Z}/m\mathbb{Z}$. When $m$ is odd, there is no restriction on $d$, however when $m$ is even, $d$ can only take values in $2\mathbb{Z}/m\mathbb{Z}$. 
\end{proof}
\begin{proposition}\label{image-deg}
The image of $\mathrm{deg}_U\colon\mathrm{Pic}_{\mathcal{X}_U/U}(U)\to\mathbb{Z}$ is 
$$\mathrm{Im}(\mathrm{deg}_U)=\left\{\begin{array}{cc}
    m\mathbb{Z} & 2\mid m \\
(\frac{m}{2})\mathbb{Z}& 2\nmid m 
\end{array}\right.$$
\end{proposition}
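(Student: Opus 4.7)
The plan is to combine the two short exact sequences (\ref{eqn1}) and (\ref{eqn2}) and read off the image of $\deg_U$ from the already-computed invariants in Lemma \ref{invariants}. The first observation is that $\mathrm{pr}_2^*\mathcal{O}(1)$ restricts on each fiber of $\mathrm{pr}_1\colon\mathcal{X}\to\mathbb{P}^2_{[a_0:a_1:a_2]}$ to the restriction of $\mathcal{O}_{\mathbb{P}^2_{[X_0:X_1:X_2]}}(1)$ to a degree-$m$ plane curve, hence has degree $m$. So $\deg_U$ acts as multiplication by $m$ on the subgroup $\mathbb{Z}\cdot\mathrm{pr}_2^*\mathcal{O}(1)\subseteq\mathrm{Pic}_{\mathcal{X}_U/U}(U)$ appearing in (\ref{eqn1}), producing at once the inclusion $m\mathbb{Z}\subseteq\mathrm{Im}(\deg_U)$.

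For the remaining index, I would next show that $\mathbb{Z}\cdot\mathrm{pr}_2^*\mathcal{O}(1)\cap\mathrm{Pic}^0_{\mathcal{X}_U/U}(U)=0$, since nonzero integer multiples of $\mathrm{pr}_2^*\mathcal{O}(1)$ have nonzero degree. Combining this with (\ref{eqn1}) produces an injection
$$\mathrm{Pic}^0_{\mathcal{X}_U/U}(U)\;\hookrightarrow\;\mathrm{Pic}_{\mathcal{X}_U/U}(U)\big/\mathbb{Z}\cdot\mathrm{pr}_2^*\mathcal{O}(1)\;\cong\;\mu_m^{-1}.$$
Because $\mu_m^{-1}$ is $m$-torsion, this forces $\mathrm{Pic}^0_{\mathcal{X}_U/U}(U)=\mathrm{Pic}^0_{\mathcal{X}_U/U}[m](U)$, bringing Lemma \ref{invariants} to bear on the whole of $\mathrm{Pic}^0_{\mathcal{X}_U/U}(U)$, not merely on its $m$-torsion. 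A snake lemma chase on the diagram formed by (\ref{eqn1}) and (\ref{eqn2}) — equivalently, taking the quotient of (\ref{eqn1}) modulo $\mathrm{Pic}^0_{\mathcal{X}_U/U}(U)$, or the quotient of (\ref{eqn2}) modulo $\mathbb{Z}\cdot\mathrm{pr}_2^*\mathcal{O}(1)$ — yields the identification
$$\mathrm{Im}(\deg_U)\big/m\mathbb{Z}\;\cong\;\mu_m^{-1}\big/\mathrm{Pic}^0_{\mathcal{X}_U/U}[m](U).$$

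The proof then finishes by plugging in Lemma \ref{invariants}: the cokernel on the right is trivial in one parity and cyclic of order two in the other, giving exactly the stated split between $m\mathbb{Z}$ and $(m/2)\mathbb{Z}$. The only delicate point is the last displayed identification, and the main thing one has to verify is that ``take any lift and record its degree modulo $m$'' is a well-defined homomorphism $\mu_m^{-1}\to\mathbb{Z}/m\mathbb{Z}$; this is immediate because any two lifts differ by an integer multiple of $\mathrm{pr}_2^*\mathcal{O}(1)$ and therefore by a multiple of $m$ in degree. Everything else in the argument is bookkeeping on the two short exact sequences.
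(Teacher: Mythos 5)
Your proposal is correct and follows essentially the same route as the paper: both rest on the exact sequences (\ref{eqn1}) and (\ref{eqn2}), the observation that $\mathrm{Pic}^0_{\mathcal{X}_U/U}(U)$ injects into $\mu_m^{-1}$ via $\mathrm{d}_2^{0,1}$ and is therefore $m$-torsion, and then Lemma \ref{invariants}. The only (cosmetic) difference is at the final step, where you identify $\mathrm{Im}(\deg_U)/m\mathbb{Z}$ with the cokernel $\mu_m^{-1}/\mathrm{Pic}^0_{\mathcal{X}_U/U}[m](U)$ via the well-defined ``degree of a lift mod $m$'' homomorphism, while the paper argues that the index-$2$ subgroup $\mathbb{Z}\cdot\mathrm{pr}_2^*\mathcal{O}(1)\oplus\mathrm{Pic}^0_{\mathcal{X}_U/U}(U)$ forces a section on $\mathrm{Pic}^{m/2}_{\mathcal{X}_U/U}$ in the even case --- the same bookkeeping, slightly repackaged.
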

\begin{proof}
Note that $\mathbb{Z}\cdot\mathrm{pr}_2^*\mathcal{O}(1)$ intersects $\mathrm{Pic}^0_{\mathcal{X}_U/U}(U)$ trivially, from sequence (\ref{eqn1}) we know $\mathrm{Pic}^0_{\mathcal{X}_U/U}(U)$ maps injectively into $\mu_m^{-1}$ via $\mathrm{d}_2^{0,1}$, thus $\mathrm{Pic}^0_{\mathcal{X}_U/U}(U)=\mathrm{Pic}^0_{\mathcal{X}_U/U}(U)[m].$
Let us note that $\mathrm{Pic}^0_{\mathcal{X}_U/U}(U)[m]=\mathrm{Pic}^0_{\mathcal{X}_U/U}[m](U)$, because equalizers commute. Now let us consider the subgroup $$\mathbb{Z}\cdot\mathrm{pr}_1^*\mathcal{O}(1)\oplus\mathrm{Pic}^0_{\mathcal{X}_U/U}(U)\subset\mathrm{Pic}_{\mathcal{X}_U/U}(U).$$
\begin{enumerate}
\item When $2\nmid m$, by Lemma \ref{invariants}, the subgroup $\mathrm{Pic}^0_{\mathcal{X}_U/U}$ maps isomorphically onto $\mu_m^{-1}$ via injection $\mathrm{d}_2^{0,1}$, so $\mathbb{Z}\cdot\mathrm{pr}_1^*\mathcal{O}(1)\oplus\mathrm{Pic}^0_{\mathcal{X}_U/U}[m](U)=\mathrm{Pic}_{\mathcal{X}_U/U}(U)$.
\item When $2\mid m$, by Lemma \ref{invariants}, the subgroup $\mathbb{Z}\cdot\mathrm{pr}_1^*\mathcal{O}(1)\oplus\mathrm{Pic}^0_{\mathcal{X}_U/U}(U)\subset\mathrm{Pic}_{\mathcal{X}_U/U}(U)$ is a proper subgroup of index $2$, hence there exists a section on some other component of the Picard scheme, say $\mathrm{Pic}^{d}_{\mathcal{X}_U/U}$. This section translates all the $m/2$ sections on $\mathrm{Pic}^0_{\mathcal{X}_U/U}(U)$ to $\mathrm{Pic}^{kd}_{\mathcal{X}_U/U}(U)$ for all $k$. Then index $2$ forces $d=m/2$ hence $\mathrm{Pic}_{\mathcal{X}_U/U}(U)=\bigsqcup_{k\in\mathbb{Z}}\mathrm{Pic}^{km/2}_{\mathcal{X}_U/U}(U).$
\end{enumerate}
Now in each of the two cases, one conclude the proposition by evaluate the degrees.
\end{proof}
Let us note that by Leray spectral sequence, the element $[L]\in\mathrm{Pic}^{m/2}_{\mathcal{X}_U/U}(U)$ is represented by a twisted line bundle $L$ on $\mathcal{X}_U$ whose associated Brauer class is $\mathrm{d}_{2}^{0,1}([L])$.
\begin{proof}[Proof of Theorem \ref{thm1}]By lemma \ref{lem-conn-components}, the theorem is converted to show that $\mathrm{H}^1(U,\mathrm{Pic}_{\mathcal{X}_U/U})=0$ and calculation of the image of $\mathrm{deg}_U\colon\mathrm{Pic}_{\mathcal{X}_U/U}\to \mathbb{Z}$. They are done in Proposition \ref{vanishing-whole-torsor} and Proposition \ref{image-deg} respectively.
\end{proof}

\section{Proof of Theorem \ref{thm2}}
We will pass to the generic point by taking colimit over open complement of curves in $U$. In order to use purity in  the excision sequence, we first remove the singularities of the curve, then remove the smooth part of the curve.
\subsection{Over the complement of curves}
Let $F\subset U$ be a closed subscheme of dimension $0$. Let $U'=U-F$. Let $i\colon Z\subset U'$ be a smooth closed subscheme. Let
$(\mathrm{pr}_1)_{U'},(\mathrm{pr}_1)_{U'-Z},(\mathrm{pr}_1)_{Z}$ be the base change of the projection $\mathrm{pr}_1\colon \mathcal{X}\to\mathbb{P}^2.$
Comparison between ramification sequences yields following commutative diagram:
\begin{equation}\xymatrix{
    \mathrm{Br}(U')[l^\infty]\ar[d]^{(\mathrm{pr}_1)_{U'}^*}\ar@{^(->}[r]&\mathrm{Br}(U'-Z)[l^\infty]\ar[d]^{(\mathrm{pr}_{1})_{U'-Z}^*}\ar[r]^-{\partial_1}&\mathrm{H}^1(Z,\mathbb{Q}_l/\mathbb{Z}_l)\ar[d]^{(\mathrm{pr}_1)_Z^*}\ar@{->}[r]^-{i_{Z,*}}&\mathrm{H}^3(U',\mathbb{Q}_l/\mathbb{Z}_l(1))\ar[d]^{(\mathrm{pr}_1)^*_{U'}}\\
    \mathrm{Br}(\mathcal{X}_{U'})[l^\infty]\ar@{^(->}[r]&\mathrm{Br}(\mathcal{X}_{U'-Z})[l^\infty]\ar[r]^-{\partial_2}&\mathrm{H}^1(\mathcal{X}_Z,\mathbb{Q}_l/\mathbb{Z}_l)\ar[r]^-{i_{\mathcal{X}_Z,*}}&\mathrm{H}^3(\mathcal{X}_{U'},\mathbb{Q}_l/\mathbb{Z}_l(1))
}\label{eqn3}\end{equation}\begin{lemma}\label{lemma-10-vanishing}
The Gysin map $i_{Z,*}\colon\mathrm{H}^1(Z,\mathbb{Q}_l/\mathbb{Z}_l)\to \mathrm{H}^3(U',\mathbb{Q}_l/\mathbb{Z}_l(1))$ is surjective.
\end{lemma}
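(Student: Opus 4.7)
The plan is to deduce surjectivity of $i_{Z,*}$ from the Gysin long exact sequence associated with the codimension-one smooth closed embedding $i\colon Z\hookrightarrow U'$. In degree three with coefficients $\mathbb{Q}_l/\mathbb{Z}_l(1)$ the sequence reads
$$\mathrm{H}^1(Z,\mathbb{Q}_l/\mathbb{Z}_l)\xrightarrow{i_{Z,*}}\mathrm{H}^3(U',\mathbb{Q}_l/\mathbb{Z}_l(1))\to\mathrm{H}^3(U'-Z,\mathbb{Q}_l/\mathbb{Z}_l(1))\to\cdots,$$
so surjectivity of $i_{Z,*}$ is equivalent to the vanishing $\mathrm{H}^3(U'-Z,\mathbb{Q}_l/\mathbb{Z}_l(1))=0$, and the whole task is reduced to proving this.

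To obtain the vanishing, I would first identify $U'-Z$ with $U-\overline{Z}$, where $\overline{Z}$ denotes the closure of $Z$ in $U$. Under the standing convention recalled just before the lemma (``first remove the singularities of the curve, then remove the smooth part''), $F$ contains the set $\overline{Z}\setminus Z$ of singular points of a curve $\overline{Z}\subset U$; consequently $F\cup Z=\overline{Z}$ and $U'-Z=U-\overline{Z}$.

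Next I would exploit factoriality of $U$. Since $U=\mathbb{P}^2\setminus V(a_0a_1a_2)\cong\mathbb{G}_m^2$ has coordinate ring a Laurent polynomial ring over $\mathbb{C}$, it is an affine UFD, so the reduced effective Weil divisor $\overline{Z}$ is cut out by a single regular function $f$, and $U-\overline{Z}=D(f)$ is a principal affine open of $U$, again affine and of dimension two. Artin's affine vanishing theorem then gives $\mathrm{H}^i(U-\overline{Z},\mathcal{F})=0$ for every torsion \'etale sheaf $\mathcal{F}$ and every $i>2$; applied with $\mathcal{F}=\mathbb{Q}_l/\mathbb{Z}_l(1)$ and $i=3$, this yields the desired vanishing and hence the lemma. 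The only real subtlety is the identification $U'-Z=U-\overline{Z}$, which requires $F\subseteq\overline{Z}$; the rest (Gysin purity, principality of divisors on a UFD, Artin vanishing) is standard, and this identification is precisely what the setup ``remove the singularities first'' is designed to guarantee.
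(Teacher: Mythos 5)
Your proof is correct and takes essentially the same route as the paper: reduce via the Gysin/excision sequence to the vanishing of $\mathrm{H}^3(U'-Z,\mathbb{Q}_l/\mathbb{Z}_l(1))$, identify $U'-Z$ with the affine surface $U-\overline{Z}$, and invoke Artin vanishing. Your added justifications (principality of $\overline{Z}$ since $U\cong\mathbb{G}_m^2$ has UFD coordinate ring, and the identification $U'-Z=U-\overline{Z}$ requiring $F\subseteq\overline{Z}$, which the ``remove singularities first'' setup guarantees) merely make explicit what the paper asserts in one line.
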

\begin{proof}
Note that $U'-Z=\mathbb{P}^2-V(a_0a_1a_2)-\overline{Z}$ is an affine surface, by Artin vanishing
$\mathrm{H}^3(U'-Z,\mathbb{Q}_l/\mathbb{Z}_l(1))=0$. Then we conclude from the excision sequence.
\end{proof}
\begin{lemma}
The pullback $(\mathrm{pr}_1)^*_{U'}\colon\mathrm{H}^3(U',\mathbb{Q}_l/\mathbb{Z}_l(1))\to\mathrm{H}^3(\mathcal{X}_{U'},\mathbb{Q}_l/\mathbb{Z}_l(1))$ is injective.
\end{lemma}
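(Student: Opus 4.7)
I would prove the injectivity by comparing the codimension-two localization (Gysin) sequences for the closed embedding $F\hookrightarrow U$ and for its base change $\mathrm{pr}_1^{-1}(F)=\bigsqcup_{p\in F}F_m\hookrightarrow \mathcal{X}_U$, related by the smooth pullback along $\pi:=(\mathrm{pr}_1)_U$. Every $p\in F$ lies in the smooth locus $U$ of the family, so $\pi^{-1}(p)$ is a smooth Fermat curve $F_m$; both embeddings are smooth of codimension $2$ and absolute purity applies on both sides.

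\textbf{Key steps.} First, $U=\mathbb{P}^2\setminus V(a_0a_1a_2)$ is affine of dimension two (complement of an ample divisor), so Artin vanishing yields $H^i(U,\mathbb{Q}_l/\mathbb{Z}_l(1))=0$ for all $i\geq 3$. Combining this with absolute purity $H^j_F(U,\mathbb{Q}_l/\mathbb{Z}_l(1))\cong H^{j-4}(F,\mathbb{Q}_l/\mathbb{Z}_l(-1))$ and feeding it into the localization long exact sequence in degree $j=3$, the surrounding terms vanish and the Gysin boundary
$$\partial_U \colon H^3(U',\mathbb{Q}_l/\mathbb{Z}_l(1)) \xrightarrow{\;\sim\;} H^0(F,\mathbb{Q}_l/\mathbb{Z}_l(-1))=\bigoplus_{p\in F}\mathbb{Q}_l/\mathbb{Z}_l(-1)$$
becomes an isomorphism. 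Second, running absolute purity on $\mathcal{X}_U$ for $\bigsqcup_{p\in F}F_m$ produces a boundary
$$\partial_{\mathcal{X}}\colon H^3(\mathcal{X}_{U'},\mathbb{Q}_l/\mathbb{Z}_l(1)) \longrightarrow H^0\bigl(\textstyle\bigsqcup_{p\in F}F_m,\,\mathbb{Q}_l/\mathbb{Z}_l(-1)\bigr).$$
Third, naturality of the localization triangle $i_*i^!\to\mathrm{id}\to Rj_*j^*$ under smooth base change gives a commutative square with $\partial_U$, $\partial_{\mathcal{X}}$ as horizontal maps and $\pi^*$ as vertical maps; the right vertical pullback $\pi^*\colon \bigoplus_p\mathbb{Q}_l/\mathbb{Z}_l(-1)\to \bigoplus_p H^0(F_m,\mathbb{Q}_l/\mathbb{Z}_l(-1))$ is the sum of the canonical isomorphisms coming from the connectedness of each Fermat fibre. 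Hence $\partial_{\mathcal{X}}\circ \pi^* = \pi^*\circ\partial_U$ is an isomorphism, which forces $\pi^*\colon H^3(U',\mathbb{Q}_l/\mathbb{Z}_l(1))\to H^3(\mathcal{X}_{U'},\mathbb{Q}_l/\mathbb{Z}_l(1))$ to be injective.

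\textbf{Main obstacle.} The only subtle point is the naturality asserted in step three: one needs the functoriality of the localization triangle under smooth pullback together with the compatibility of the absolute purity isomorphism with $\pi^*$, both standard but requiring a careful bookkeeping of the Tate twists. The remaining ingredients, namely Artin vanishing on the affine surface $U$ and connectedness of the smooth Fermat fibres, are applied in a routine way.
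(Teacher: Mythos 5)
Your proposal is correct and follows essentially the same route as the paper: both compare the excision (Gysin) sequences for $F\hookrightarrow U$ and $\mathcal{X}_F\hookrightarrow\mathcal{X}_U$, use Artin vanishing on the affine surface $U$ to see that the boundary $\mathrm{H}^3(U',\mathbb{Q}_l/\mathbb{Z}_l(1))\to\mathrm{H}^0(F,\mathbb{Q}_l/\mathbb{Z}_l(-1))$ is an isomorphism, and then use injectivity of the pullback on $\mathrm{H}^0$ (connected, nonempty fibres) together with commutativity of the square to conclude. Your explicit discussion of absolute purity and of the naturality of the localization triangle under smooth base change merely spells out what the paper leaves implicit in its phrase ``the commutative diagram induced from excision sequences.''
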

\begin{proof}
Consider the commutative diagram induced from excision sequences
 $$\xymatrix{
\mathrm{H}^3(U,\mathbb{Q}_l/\mathbb{Z}_l(1))\ar[r]&\mathrm{H}^3(U',\mathbb{Q}_l/\mathbb{Z}_l(1))\ar[d]^{\mathrm{(pr_1)}^*_{U'}}\ar[r]^{\partial_{U'}}&\mathrm{H}^0(F,\mathbb{Q}_l/\mathbb{Z}_l(-1))\ar[d]^{(\mathrm{pr}_1)_{F}^*}\ar[r]&\mathrm{H}^4(U,\mathbb{Q}_l/\mathbb{Z}_l(1))\\
&\mathrm{H}^3(\mathcal{X}_{U'},\mathbb{Q}_l/\mathbb{Z}_l(1))\ar[r]&\mathrm{H}^0(\mathcal{X}_F,\mathbb{Q}_l/\mathbb{Z}_l(1))&
}$$
By Artin vanishing on $U$, we know that $\partial_{U'}$ is an isomorphism. 
 Since $(\mathrm{pr}_1)_F$ is surjective, we know that $(\mathrm{pr}_1)_{F}^*$ is injective. By commutativity of the diagram we know that $(\mathrm{pr}_1)_{U'}^*$ is injective. 
\end{proof}
\begin{lemma}\label{coker-h1}
The pullback $(\mathrm{pr}_1)_Z^*\colon\mathrm{H}^1(Z,\mathbb{Q}_l/\mathbb{Z}_l)\to\mathrm{H}^1(\mathcal{X}_Z,\mathbb{Q}_l/\mathbb{Z}_l)$ is injective and $$\mathrm{coker}(\mathrm{pr}_1)_Z^*\cong \mathrm{H}^0(Z,R^1(\mathrm{pr}_1)_{Z,*}\mathbb{Q}_l/\mathbb{Z}_l).$$
\end{lemma}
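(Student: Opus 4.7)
The plan is to run the Leray spectral sequence
$$E_2^{p,q}=\mathrm{H}^p(Z,R^q(\mathrm{pr}_1)_{Z,*}\mathbb{Q}_l/\mathbb{Z}_l)\Rightarrow\mathrm{H}^{p+q}(\mathcal{X}_Z,\mathbb{Q}_l/\mathbb{Z}_l)$$
for the proper smooth morphism $(\mathrm{pr}_1)_Z\colon\mathcal{X}_Z\to Z$ and to read both assertions of the lemma off its five-term exact sequence. Since $Z\subset U'\subset U$ avoids the singular locus of $\mathrm{pr}_1$, the fibers of $(\mathrm{pr}_1)_Z$ are smooth connected Fermat curves, so proper base change identifies $R^0(\mathrm{pr}_1)_{Z,*}\mathbb{Q}_l/\mathbb{Z}_l$ with the constant sheaf $\underline{\mathbb{Q}_l/\mathbb{Z}_l}_Z$. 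The five-term sequence then takes the form
$$0\to\mathrm{H}^1(Z,\mathbb{Q}_l/\mathbb{Z}_l)\xrightarrow{(\mathrm{pr}_1)_Z^*}\mathrm{H}^1(\mathcal{X}_Z,\mathbb{Q}_l/\mathbb{Z}_l)\to\mathrm{H}^0(Z,R^1(\mathrm{pr}_1)_{Z,*}\mathbb{Q}_l/\mathbb{Z}_l)\xrightarrow{d_2}\mathrm{H}^2(Z,\mathbb{Q}_l/\mathbb{Z}_l),$$
which immediately delivers the injectivity of $(\mathrm{pr}_1)_Z^*$.

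To identify the cokernel with $\mathrm{H}^0(Z,R^1(\mathrm{pr}_1)_{Z,*}\mathbb{Q}_l/\mathbb{Z}_l)$, it suffices to force the transgression $d_2$ to vanish, and for this it is enough that its target $\mathrm{H}^2(Z,\mathbb{Q}_l/\mathbb{Z}_l)$ is zero. The geometric input I would invoke is the affineness of $Z$: the base $U=\mathbb{P}^2-V(a_0a_1a_2)$ is isomorphic to $\mathbb{G}_m^2$ and hence affine; removing the zero-dimensional set $F$ preserves affineness, and a closed subscheme of an affine scheme is affine. Since $Z$ is thus a smooth affine variety of dimension at most one, Artin vanishing gives $\mathrm{H}^i(Z,\mathbb{Q}_l/\mathbb{Z}_l)=0$ for $i\geq 2$, which kills the target of $d_2$ and closes the argument.

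The main obstacle is minimal: everything boils down to observing that the ambient affineness of $U$ descends to $Z$ and trivialises the transgression through Artin vanishing, so $d_2$ itself never has to be computed. The only points worth double-checking are the constancy of $R^0(\mathrm{pr}_1)_{Z,*}\mathbb{Q}_l/\mathbb{Z}_l$, which comes from the hypothesis $Z\subset U'$ via proper base change, and the dimension bound $\dim Z\leq 1$, which is built into the codimension-one setup of the excision diagram preceding the lemma.
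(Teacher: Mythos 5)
Your route is the same as the paper's: the paper's entire proof is the phrase ``this follows from the Leray spectral sequence along $(\mathrm{pr}_1)_Z$,'' and your five-term sequence, together with the identification $R^0(\mathrm{pr}_1)_{Z,*}\mathbb{Q}_l/\mathbb{Z}_l\cong\underline{\mathbb{Q}_l/\mathbb{Z}_l}$ coming from properness and connectedness of the fibers, is exactly the content being invoked; reducing the cokernel statement to the vanishing of the transgression target $\mathrm{H}^2(Z,\mathbb{Q}_l/\mathbb{Z}_l)$ is also the right move. The flaw is in how you establish that vanishing. The claim ``removing the zero-dimensional set $F$ preserves affineness'' is false: a normal surface minus a nonempty finite set of points is never affine (its ring of global functions is unchanged, so if the complement were affine it would equal the whole surface). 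Cohomologically this is visible inside the paper itself: in the lemma immediately following Lemma \ref{lemma-10-vanishing} one has $\mathrm{H}^3(U',\mathbb{Q}_l/\mathbb{Z}_l(1))\cong\mathrm{H}^0(F,\mathbb{Q}_l/\mathbb{Z}_l(-1))\neq0$ for $F\neq\emptyset$, which no affine surface can satisfy by Artin vanishing. So you cannot deduce affineness of $Z$ from ``closed subscheme of the affine scheme $U'$.''

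The conclusion you need is still true, by a one-dimensional argument rather than a two-dimensional one. Each one-dimensional irreducible component $Z_i$ of the smooth curve $Z$ is non-complete: if $Z_i$ were complete, the immersion $Z_i\hookrightarrow U$ would be proper, hence a closed immersion, and a complete irreducible curve cannot be a closed subvariety of the affine variety $U$, since its only global functions are constants. A smooth irreducible non-complete curve over $\mathbb{C}$ is affine (it is the complement of a nonempty finite set of points in its smooth compactification), so Artin vanishing gives $\mathrm{H}^2(Z_i,\mathbb{Q}_l/\mathbb{Z}_l)=0$, hence $\mathrm{H}^2(Z,\mathbb{Q}_l/\mathbb{Z}_l)=0$ and $d_2=0$ as desired. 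Equivalently: the closure $\overline{Z}$ in $U$ is a closed curve in an affine variety, hence affine, and $Z=\overline{Z}\setminus(\overline{Z}\cap F)$ is an affine curve minus finitely many closed points, which is again affine --- removing points preserves affineness in dimension one, which is precisely what fails in dimension two. With this substitution your proof is complete and coincides with the paper's intended argument.
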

\begin{proof}
This follows from Leray spectral sequence along $(\mathrm{pr}_1)_Z$.
\end{proof}
Let $\mathrm{d}_2^{0,2}$ be the differential on the $E_2$ page:
$$\mathrm{d}_2^{0,2}\colon\mathrm{H}^0(U',R^2(\mathrm{pr}_1)_{U',*}\mathbb{Q}_l/\mathbb{Z}_l(1))\to\mathrm{H}^2(U',R^1(\mathrm{pr}_1)_{U',*}\mathbb{Q}_l/\mathbb{Z}_l(1)),$$
\begin{proposition}\label{relative-brauer}
For any prime $l$, we have short exact sequence $$\xymatrix{0\ar[r]&\frac{\mathrm{Br}(\mathcal{X}_{U'-Z})[l^\infty]}{(\mathrm{pr_1})_{U'-Z}^*\mathrm{Br}(U'-Z)[l^\infty]}\ar[r]^-{\overline{\partial_2}}&\mathrm{H}^0(Z,R^1(\mathrm{pr}_1)_{Z,*}\mathbb{Q}_l/\mathbb{Z}_l)\ar[r]^-{\overline{i_{Z,*}}}&\frac{\mathrm{H}^2(U',R^1(\mathrm{pr}_{1})_{U',*}\mathbb{Q}_l/\mathbb{Z}_l(1))}{\mathrm{d}_{2}^{0,2}(\mathrm{H}^0(U',R^2(\mathrm{pr}_1)_{U',*}\mathbb{Q}_l/\mathbb{Z}_l(1)))}}$$
\end{proposition}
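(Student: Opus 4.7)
The strategy is a double snake-lemma chase on the commutative diagram (\ref{eqn3}), followed by a Leray-theoretic identification of the resulting cokernel. Three inputs will be needed: (i) Lemma \ref{coker-h1} identifying $\mathrm{coker}((\mathrm{pr}_1)_Z^*)$ with $\mathrm{H}^0(Z,R^1(\mathrm{pr}_1)_{Z,*}\mathbb{Q}_l/\mathbb{Z}_l)$; (ii) the injectivity of the rightmost vertical pullback of (\ref{eqn3}) on $\mathrm{H}^3$, together with right-exactness of the top row from Lemma \ref{lemma-10-vanishing}; and (iii) surjectivity of $(\mathrm{pr}_1)_{U'}^*\colon\mathrm{Br}(U')[l^\infty]\to\mathrm{Br}(\mathcal{X}_{U'})[l^\infty]$. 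For (iii), I combine the surjectivity on $U$ from Lemma \ref{surjection} with the canonical isomorphisms $\mathrm{Br}(U)[l^\infty]\cong\mathrm{Br}(U')[l^\infty]$ and $\mathrm{Br}(\mathcal{X}_U)[l^\infty]\cong\mathrm{Br}(\mathcal{X}_{U'})[l^\infty]$ provided by purity (Lemma \ref{ramificaiton-lemma}(1)) applied to the codimension-two pair $F\subset U$, $\mathcal{X}_F\subset\mathcal{X}_U$.

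Split each row of (\ref{eqn3}) into the two natural short exact sequences passing through $\ker(i_{*})$ and $\mathrm{im}(i_{*})$. The snake lemma applied to the Brauer-group pair—whose leftmost vertical arrow is surjective by (iii)—produces an isomorphism
\[
\frac{\mathrm{Br}(\mathcal{X}_{U'-Z})[l^\infty]}{(\mathrm{pr}_1)_{U'-Z}^*\mathrm{Br}(U'-Z)[l^\infty]}\cong \mathrm{coker}\bigl((\mathrm{pr}_1)_Z^*\colon \ker(i_{Z,*})\to\ker(i_{\mathcal{X}_Z,*})\bigr).
\]
The snake lemma applied to the subsequent $\mathrm{H}^1$--$\mathrm{H}^3$ short exact sequences—whose rightmost vertical arrow is injective—together with Lemma \ref{coker-h1} yields
\[
0\to\mathrm{coker}\bigl((\mathrm{pr}_1)_Z^*\colon \ker(i_{Z,*})\to\ker(i_{\mathcal{X}_Z,*})\bigr)\to \mathrm{H}^0(Z,R^1(\mathrm{pr}_1)_{Z,*}\mathbb{Q}_l/\mathbb{Z}_l)\to \mathrm{coker}\bigl((\mathrm{pr}_1)_{U'}^*\colon\mathrm{H}^3(U',\mathbb{Q}_l/\mathbb{Z}_l(1))\to\mathrm{im}(i_{\mathcal{X}_Z,*})\bigr)\to 0.
\]
Composing the two identifications realizes $\overline{\partial_2}$ as an injection onto the kernel of the natural projection to the last cokernel.

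It remains to embed this last cokernel into the Leray quotient appearing in the statement, via the spectral sequence $E_2^{p,q}=\mathrm{H}^p(U',R^q(\mathrm{pr}_1)_{U',*}\mathbb{Q}_l/\mathbb{Z}_l(1))\Rightarrow\mathrm{H}^{p+q}(\mathcal{X}_{U'},\mathbb{Q}_l/\mathbb{Z}_l(1))$. Injectivity of $(\mathrm{pr}_1)_{U'}^*$ on $\mathrm{H}^3$ identifies its image with the deepest filtration piece $F^3$; the standard compatibility of the codimension-one Gysin map $i_{\mathcal{X}_Z,*}$ with the Leray filtrations of $(\mathrm{pr}_1)_Z$ and $(\mathrm{pr}_1)_{U'}$ (shifting bidegree by $(2,0)$) places $\mathrm{im}(i_{\mathcal{X}_Z,*})$ inside $F^2$. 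Hence the cokernel embeds into $F^2/F^3=E_\infty^{2,1}$, which in turn sits inside $E_2^{2,1}/\mathrm{im}(d_2^{0,2})$ as the kernel of the induced $d_2^{2,1}$ (higher differentials being irrelevant for dimension reasons on the affine surface $U'$). I expect the main obstacle to be this last step: cleanly verifying the Leray-filtration compatibility of the Gysin map so that the composition $\mathrm{H}^0(Z,R^1(\mathrm{pr}_1)_{Z,*}\mathbb{Q}_l/\mathbb{Z}_l)\to F^2/F^3\hookrightarrow E_2^{2,1}/\mathrm{im}(d_2^{0,2})$ coincides with the natural Gysin $\overline{i_{Z,*}}$ for the local system $R^1(\mathrm{pr}_1)_{U',*}\mathbb{Q}_l/\mathbb{Z}_l$ appearing in the statement.
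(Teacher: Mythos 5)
Your proposal is correct and matches the paper's own proof in all essentials: the same inputs (Lemma \ref{surjection} plus purity from Lemma \ref{ramificaiton-lemma}(1), Lemma \ref{coker-h1}, Lemma \ref{lemma-10-vanishing}, and injectivity of $(\mathrm{pr}_1)_{U'}^*$ on $\mathrm{H}^3$) feed a snake-lemma argument on diagram (\ref{eqn3}), with the rightmost cokernel identified via the Leray filtration as $\mathrm{F}^2\mathrm{H}^3/\mathrm{F}^3\mathrm{H}^3=E_\infty^{2,1}$ and then as $E_2^{2,1}/\mathrm{im}(\mathrm{d}_2^{0,2})$ using Artin vanishing. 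Your splitting into two snake-lemma applications is merely a repackaging of the paper's single application to the rows truncated at $\mathrm{F}^2\mathrm{H}^3$, and the Gysin--filtration compatibility you flag as the delicate final point is precisely the statement $i_{\mathcal{X}_Z,*}\mathrm{F}^k\mathrm{H}^1(\mathcal{X}_Z,\mathbb{Q}_l/\mathbb{Z}_l)\subseteq\mathrm{F}^{k+2}\mathrm{H}^3(\mathcal{X}_{U'},\mathbb{Q}_l/\mathbb{Z}_l(1))$ that the paper invokes (the paper defines $\overline{i_{Z,*}}$ by this diagram rather than proving agreement with an independently defined local-system Gysin map, so no more is required).
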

\begin{proof}
Let us note that Leray spectral sequence equips cohomologies with canonical decreasing filtration
$$\mathrm{F}^1\mathrm{H}^1(\mathcal{X}_Z,\mathbb{Q}_l/\mathbb{Z}_l)=\mathrm{F}^1\mathrm{H}^1\supset \mathrm{F}^0\mathrm{H}^1=\mathrm{H}^1(Z,\mathbb{Q}_l/\mathbb{Z}_l)\supset0$$
$$\mathrm{F}^3\mathrm{H}^3(\mathcal{X}_{U'},\mathbb{Q}_l/\mathbb{Z}_l(1))=\mathrm{F}^3\mathrm{H}^3\supset\mathrm{F}^2\mathrm{H}^3\supset\mathrm{F}^1\mathrm{H}^3\supset\mathrm{F}^0\mathrm{H}^3\supset 0$$
The Gysin map is compatible with  filtration 
so that $$i_{\mathcal{X}_Z,*}\mathrm{F}^k\mathrm{H}^1(\mathcal{X}_Z,\mathbb{Q}_l/\mathbb{Z}_l)\subseteq \mathrm{F}^{k+2}\mathrm{H}^3(\mathcal{X}_{U'},\mathbb{Q}_l/\mathbb{Z}_l(1)).$$ 

Together with the  previous lemmas, we rewrite diagram (\ref{eqn3}) as:
$$\xymatrix{    0\ar[r]&\frac{\mathrm{Br}(U'-Z)[l^\infty]}{\mathrm{Br}(U')[l^\infty]}\ar[d]^{\overline{(\mathrm{pr}_{1})_{U'-Z}^*}}\ar[r]&\mathrm{H}^1(Z,\mathbb{Q}_l/\mathbb{Z}_l)\ar@{^(->}[d]^{(\mathrm{pr}_1)_Z^*}\ar[r]^-{i_{Z,*}}&\mathrm{H}^3(U',\mathbb{Q}_l/\mathbb{Z}_l(1))\ar@{^(->}[d]^{(\mathrm{pr}_1)^*_{U'}}\ar[r]&0\\
    0\ar[r]&\frac{\mathrm{Br}(\mathcal{X}_{U'-Z})[l^\infty]}{\mathrm{Br}(\mathcal{X}_{U'})[l^\infty]}\ar[r]&\mathrm{H}^1(\mathcal{X}_Z,\mathbb{Q}_l/\mathbb{Z}_l)\ar[r]^-{i_{\mathcal{X}_Z,*}}&\mathrm{F}^2\mathrm{H}^3(\mathcal{X}_{U'},\mathbb{Q}_l/\mathbb{Z}_l(1))\\
}$$The proposition will follow from snake lemma once we write down the cokernels of the vertical maps.
\begin{enumerate}
\item 
By Lemma \ref{surjection}, we know that $(\mathrm{pr}_{1})_{U}^*\mathrm{Br}(U)=\mathrm{Br}(\mathcal{X}_U)$. By Lemma \ref{ramificaiton-lemma} we have $\mathrm{Br}(U)=\mathrm{Br}(U'), \ \mathrm{Br}(\mathcal{X}_{U})=\mathrm{Br}(\mathcal{X}_{U'}),$
 therefore $\mathrm{Br}(\mathcal{X}_{U'})=(\mathrm{pr}_{1,U'}^*)\mathrm{Br}({U'})\subseteq(\mathrm{pr}_{1})_{U'-Z}^*\mathrm{Br}(U'-Z)$ and 
$$\mathrm{coker}\left(\overline{(\mathrm{pr}_1)^*_{U'-Z}}\right)=
\frac{\mathrm{Br}(\mathcal{X}_{U'-Z})[l^\infty]}{(\mathrm{pr_1})_{U'-Z}^*\mathrm{Br}(U'-Z)[l^\infty]+\mathrm{Br}(\mathcal{X}_{U'})[l^\infty]}
=\frac{\mathrm{Br}(\mathcal{X}_{U'-Z})[l^\infty]}{(\mathrm{pr_1})_{U'-Z}^*\mathrm{Br}(U'-Z)[l^\infty]}.$$
\item By Lemma \ref{coker-h1}, we know that $$\mathrm{coker}(\mathrm{pr}_1)_{Z}^*\cong\mathrm{H}^0(Z,R^1(\mathrm{pr}_1)_{Z,*}\mathbb{Q}_l/\mathbb{Z}_l(1))$$
\item By convergence of first quadrant spectral sequence, we know that $\mathrm{coker}(\mathrm{pr}_1)_{U'}^*\cong\mathrm{ker}(\mathrm{d}_2^{2,1})/\mathrm{im}(\mathrm{d}_2^{0,1})$.
By excision and Artin vanishing, one has $\mathrm{H}^4(U',\mathbb{Q}_l/\mathbb{Z}_l(1))=0$ therefore $\mathrm{d}_2^{2,1}=0$, so we conclude that 
$$\mathrm{coker}(\mathrm{pr}_1)_{U'}^*\cong\frac{\mathrm{H}^2(U',R^1(\mathrm{pr}_{1})_{U',*}\mathbb{Q}_l/\mathbb{Z}_l(1))}{\mathrm{d}_{2}^{0,2}(\mathrm{H}^0(U',R^2(\mathrm{pr}_1)_{U',*}\mathbb{Q}_l/\mathbb{Z}_l(1)))}
$$
\end{enumerate}
Now we conclude the proposition by snake lemma.
\end{proof}

Let us denote $\mathbb{L}=R^1(\mathrm{pr}_1)_{U,*}\mathbb{Q}_l/\mathbb{Z}_l(1)$, so we may rewrite the result of Proposition \ref{relative-brauer} as \begin{equation}\xymatrix{0\ar[r]&\frac{\mathrm{Br}(\mathcal{X}_{U'-Z})}{(\mathrm{pr}_1)^*_{U'-Z}\mathrm{Br}(U'-Z)}[l^\infty]\ar[r]&\mathrm{H}^0(Z,\mathbb{L}|_Z(-1))\ar[r]&\frac{\mathrm{H}^2(U',\mathbb{L}|_{U'})}{\mathrm{d}_2^{0,2}(\mathrm{H}^0(U',R^2(\mathrm{pr}_1)_{U',*}\mathbb{Q}_l/\mathbb{Z}_l(1)))}}
\label{eqn4}\end{equation}
By excision sequence we know that $\mathrm{H}^2(U',\mathbb{L}|_{U'})=\mathrm{H}^2(U,\mathbb{L})$. Also note that since $U$ is irreducible, global section of locally constant sheaf does not alter when we pass to smaller open subsets, therefore the rightmost term is isomorphic to  $\frac{\mathrm{H}^2(U,\mathbb{L})}{\mathrm{d}_2^{0,2}\mathrm{H}^0(U,R^2(\mathrm{pr}_1)_{U,*}\mathbb{Q}_l/\mathbb{Z}_l(1))}$ hence independent of $F$ and $Z$ we choose.
\subsection{Passing to the generic point}
We are ready to compute the relative Brauer group of the generic Fermat curves.
\begin{proposition}\label{exact-sequence-cardinality}
Let $(\mathrm{pr}_1)_K\colon X_K\to K$ be the generic fiber of $(\mathrm{pr}_1)_U\colon\mathcal{X}_U\to U$. For any prime $l$ we have the following exact sequence,
where the direct summand is indexed by all codimension $1$ irreducible subvarieties $W\subset U$, and $\eta_W$ is the generic point of $W$.
$$\xymatrix{0\ar[r]&\frac{\mathrm{Br}(X_K)}{(\mathrm{pr}_1)_K^*\mathrm{Br}(K)}[l^\infty]\ar[r]&\bigoplus_{W\in U^{(1)}}\mathrm{H}^0(\eta_W,\mathbb{L}|_{\eta_W}(-1))\ar[r]&\frac{\mathrm{H}^2(U,\mathbb{L})}{\mathrm{d}_2^{0,2}\mathrm{H}^0(U,R^2(\mathrm{pr}_1)_{U,*}\mathbb{Q}_l/\mathbb{Z}_l(1))}}.$$
\end{proposition}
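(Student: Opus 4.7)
The plan is to derive the exact sequence in the statement by taking a filtered colimit of the exact sequence in Proposition \ref{relative-brauer} over a cofinal system of open neighborhoods $V$ of the generic point $\eta = \mathrm{Spec}(K) \in U$. Since filtered colimits are exact, the colimit yields a three-term exact sequence; the main task is to identify each colimit with the term appearing in the statement.

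A convenient cofinal system of opens is given by sets of the form $V = U' - Z$, where $F := U \setminus U'$ is a finite set of points and $Z \subset U'$ is a smooth closed subscheme of pure codimension one. Every open subset of $U$ can be shrunk to one of this form by enlarging $F$ to contain the isolated points of $U \setminus V$ together with the singular locus of $\overline{U\setminus V}$, and then taking $Z$ to be the strict transform of the codimension-one part. Since $\eta = \lim_V V$ along this cofiltered system of affine open immersions, étale cohomology with torsion coefficients commutes with the limit, giving $\mathrm{Br}(X_K)[l^\infty] = \mathrm{colim}_V\,\mathrm{Br}(\mathcal{X}_V)[l^\infty]$ and $\mathrm{Br}(K)[l^\infty] = \mathrm{colim}_V\,\mathrm{Br}(V)[l^\infty]$. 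Taking the quotient term by term yields the leftmost term in the statement. The rightmost term in Proposition \ref{relative-brauer} is already independent of the pair $(F,Z)$ -- as remarked in the paragraph just before the proposition -- so its colimit along our system is itself.

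For the middle term, decomposing $Z$ into irreducible components $Z = \bigsqcup_i Z_i$ gives $\mathrm{H}^0(Z, \mathbb{L}|_Z(-1)) = \bigoplus_i \mathrm{H}^0(Z_i, \mathbb{L}|_{Z_i}(-1))$. As $V$ shrinks, every codimension-one irreducible subvariety $W \subset U$ eventually appears as a component of some $Z$, restricted to smaller and smaller Zariski-open subsets $W^\circ \subset W$; and for each fixed $W$, the same commutation of étale cohomology with cofiltered limits gives $\mathrm{colim}_{W^\circ}\,\mathrm{H}^0(W^\circ, \mathbb{L}|_{W^\circ}(-1)) = \mathrm{H}^0(\eta_W, \mathbb{L}|_{\eta_W}(-1))$. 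Combining these two directions of colimit produces $\bigoplus_{W \in U^{(1)}} \mathrm{H}^0(\eta_W, \mathbb{L}|_{\eta_W}(-1))$. The main subtlety to verify is that the transition maps in the colimit respect these direct-sum decompositions: passing from $V$ to a smaller $V'$ should send the summand indexed by $W$ to the corresponding summand on the $V'$-side via ordinary restriction along $W \cap V'^{c} \subset W \cap V^{c}$, while summands for components newly appearing in $Z'$ receive no contribution from $Z$. This functoriality is a routine check using naturality of the Gysin/purity boundary maps of Lemma \ref{ramificaiton-lemma} with respect to open inclusions, so no genuine obstacle arises here.
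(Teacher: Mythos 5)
Correct, and essentially the paper's own proof: the paper obtains the sequence by taking the directed colimit of the sequence (\ref{eqn4}) from Proposition \ref{relative-brauer} over all one-dimensional reduced subschemes $W\subset U$, taking $F=W^{\mathrm{sing}}$ and $Z=W^{\mathrm{sm}}$, and identifies the terms of the colimit just as you do. The only adjustment worth making is in your cofinal system: rather than placing the isolated points of $U\setminus V$ into $F$, enlarge the curve so that it passes through them (keeping $F\subset\overline{Z}$, as in the paper's choice), since then $U'-Z$ is the complement of a curve in the affine surface $U$; this affineness is what Lemma \ref{lemma-10-vanishing} -- and hence Proposition \ref{relative-brauer} -- silently relies on, and it can fail for pairs where $F\not\subset\overline{Z}$.
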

\begin{proof}For every one-dimensional reduced subscheme $W\subset U$, let us take $F=Z^{\mathrm{sing}}$, $Z=Z^{\mathrm{sm}}$, we have the exact sequence as in equation (\ref{eqn4}). 
Then we conclude by taking directed colimit over all one dimensional subschemes $W\subset U$.
\end{proof}

\begin{lemma}\label{countable}
The cohomology group $\mathrm{H}^2(U,\mathbb{L})$ has countable cardinality. 
\end{lemma}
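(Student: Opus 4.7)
The plan is to reduce the question to étale cohomology with finite coefficients, where standard finiteness theorems apply directly. First, since $(\mathrm{pr}_1)_U\colon \mathcal{X}_U\to U$ is smooth and proper, the smooth base change theorem shows that for every $n\geq1$ the sheaf $\mathbb{L}_n:=R^1(\mathrm{pr}_1)_{U,*}\mu_{l^n}$ is locally constant constructible on $U$, with finite stalks $\mathrm{H}^1(F_m,\mu_{l^n})\cong(\mathbb{Z}/l^n\mathbb{Z})^{2g}$ where $g=\binom{m-1}{2}$. Since higher direct images commute with filtered colimits of torsion sheaves along a qcqs morphism, one has $\mathbb{L}=\mathrm{colim}_{n}\mathbb{L}_n$.

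Next I would invoke the fact that étale cohomology on a noetherian scheme commutes with filtered colimits of abelian sheaves to rewrite
\[
\mathrm{H}^2(U,\mathbb{L})=\mathrm{colim}_{n\geq1}\mathrm{H}^2(U,\mathbb{L}_n).
\]
By the finiteness theorem for étale cohomology of a variety over $\mathbb{C}$ with constructible torsion coefficients (or, since we work over $\mathbb{C}$, by Artin's comparison with the singular cohomology of the underlying complex analytic space of finite type, together with the fact that $U^{\mathrm{an}}$ is a finite CW complex), each group $\mathrm{H}^2(U,\mathbb{L}_n)$ is finite. A filtered colimit of finite abelian groups indexed by $\mathbb{N}$ is at most countable, so $\mathrm{H}^2(U,\mathbb{L})$ is countable, which is what we want.

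The only substantive input is the finiteness of $\mathrm{H}^2(U,\mathbb{L}_n)$; this is exactly the standard finiteness result for lisse sheaves on a smooth complex variety, and the rest of the argument is bookkeeping with colimits. The main place where one must be careful is to note that $U$ is not proper, so one does not get finiteness of $\mathrm{H}^2(U,\mathbb{L})$ itself (a priori this is a divisible $l$-primary torsion group), only countability; this is why we stop at the finite level before passing to the colimit. If desired, one could alternatively compute explicitly by using isotriviality: the Cartan-Leray spectral sequence along the Galois cover $f\colon\widetilde{U}\to U$ with group $\mu_m^{\oplus2}$, together with the identification $\widetilde{U}\cong\mathbb{G}_m^2$ and Künneth, gives $\mathrm{H}^2(\widetilde{U},\mathbb{L}|_{\widetilde{U}})$ as a countable group, and the $E_2$-terms involving group cohomology of $\mu_m^{\oplus2}$ with countable coefficients remain countable; but the colimit argument above is shorter and avoids any explicit calculation.
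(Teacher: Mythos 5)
Your proof is correct and takes essentially the same route as the paper's: write $\mathbb{L}$ as a filtered colimit of finite locally constant sheaves, commute $\mathrm{H}^2(U,-)$ with the colimit, invoke finiteness of \'etale cohomology with finite constructible coefficients on a complex quasi-projective variety, and conclude since a colimit of finite groups indexed by $\mathbb{N}$ is countable. The only cosmetic difference is that the paper uses the torsion subsheaves $\mathbb{L}[n]$ whereas you use $R^1(\mathrm{pr}_1)_{U,*}\mu_{l^n}$; these coincide because $R^1$ commutes with filtered colimits and the relevant $R^0$ term is divisible, so no substantive gap separates the two write-ups.
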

\begin{proof}
Let $\mathbb{L}[n]\subset \mathbb{L}$ be the $n$-torsion subsheaf of $\mathbb{L}$, then $\mathbb{L}=\mathrm{colim}_n\mathbb{L}[n]$. Now note that $$\mathrm{H}^2(U,\mathbb{L})=\mathrm{colim}_{n\to\infty}\mathrm{H}^2(U,\mathbb{L}[n]),$$ it reduces to the well-known result that \'etale cohomology of finite locally constant system on quasi-projective varieties are finite.
\end{proof}
The following elementary lemma says that every generically finite covering of complex algebraic manifolds splits along uncountably many divisors in the base.
\begin{lemma}\label{split}
Let $f\colon X\to Y$ be a generically finite morphism of complex algebraic varieties, let $Y^{\mathrm{sm}}\subset Y$ be the smooth loci of $Y$, let $Y^\circ\subset Y^{\mathrm{sm}}$ be the locus over which $f$ is finite \'etale, and let $X^\circ=f^{-1}(Y^\circ)$.
Then for any $x\in X^\circ$, there exists a hypersurface $H\subset X$ passing through $x$ and maps birationally into $Y$.
\end{lemma}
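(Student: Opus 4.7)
The plan is to reduce, by shrinking $Y$, to an affine monogenic situation and then write down $H$ by hand. Since the statement is local on $Y$ near $y := f(x)$, I first replace $Y$ by a small affine open neighborhood of $y$ contained in $Y^\circ$ and $X$ by $f^{-1}(Y)$; by Stein factorization I may further assume $f$ is finite, since the birational part can be re-introduced at the end by taking proper transforms. Writing $Y = \mathrm{Spec}(A)$ and $X = \mathrm{Spec}(B)$, the key structural reduction is to arrange $B = A[T]/(p(T))$ for a monic separable $p \in A[T]$ of degree $d := \deg f$. This is accomplished by choosing $b \in B$ whose image in $B \otimes_A \kappa(y) \cong \kappa(y)^d$ has $d$ pairwise distinct components (possible since $\kappa(y) = \mathbb{C}$), and then invoking Nakayama to conclude $A[b] = B$ after possibly shrinking $Y$ further.

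Once we are in this form, the hypersurface is nearly tautological. I take $x$ to be a closed point (the lemma is vacuous for the generic point of $X$, and only closed $x$ is needed for the applications). Let $c \in \mathbb{C} \subseteq A$ be the value of $T$ at $x$, and set $H := V(T-c) \subset X$. Then $x \in H$ by construction, and the short ring computation
\[
B/(T-c) \;=\; A[T]/(p(T),\, T-c) \;\cong\; A/(p(c))
\]
shows that $f|_H$ corresponds to the quotient $A \to A/(p(c))$, so it identifies $H$ scheme-theoretically with $V(p(c)) \subseteq Y$. If $p(c) = 0$ in $A$, then $T - c$ divides $p(T)$ in the domain $A[T]$, forcing $d = 1$ and making the lemma trivial; otherwise $p(c)$ is a nonzero regular function on $Y$, so $H$ has codimension one and $f|_H : H \to V(p(c))$ is an isomorphism.

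To globalize, I pass to the irreducible component $H_0$ of $H$ containing $x$ and take its Zariski closure in the original variety $X$. Since $f|_H$ is an isomorphism, $f|_{H_0}$ is an isomorphism onto the corresponding irreducible component of $V(p(c))$, and the closure is an irreducible divisor through $x$ mapping birationally into $Y$.

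The only real obstacle is the monogenic reduction $B = A[T]/(p(T))$, which rests on the Nakayama argument that uses $\kappa(y) = \mathbb{C}$; once that is in place everything else is formal, consisting of a one-line ring computation plus routine bookkeeping with closures and irreducible components. A small caveat to check is that the reductions (Stein factorization and Zariski localization around $y$) are harmless, but this is straightforward because the desired divisor is preserved both under proper transform and under Zariski closure.
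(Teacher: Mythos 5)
Your proposal is correct in substance but takes a genuinely different route from the paper's. The paper argues projectively: it compactifies and resolves singularities so that $X,Y$ are smooth projective, takes $H$ in a very ample linear system through $x$ avoiding the finitely many other points of $f^{-1}(f(x))$, notes that $f|_H$ is generically finite by upper semicontinuity of fiber dimension, and then rules out $\deg(f|_H)\geq2$ because properness would force a second point of the fiber $f^{-1}(f(x))$ onto $H$. You instead work affine-locally inside $Y^\circ$: primitive element plus Nakayama gives the monogenic presentation $B=A[T]/(p(T))$, and the divisor is the explicit level set $V(T-c)$ of the primitive element, which your one-line computation identifies with the closed subscheme $V(p(c))\subset Y$ via $f$. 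What your approach buys: the map $f|_H$ is an actual closed immersion onto its image (no degree counting, no compactification or resolution needed), and the argument is purely algebraic. What the paper's approach buys: $H$ can be taken in any very ample linear system, so it moves freely and stays globally defined from the start, whereas you must close up an affine-local construction at the end.

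Two steps in your write-up need tightening, though neither is fatal. First, the assertion that $p(c)=0$ forces $d=1$ does not follow merely from $T-c$ dividing $p$ in the domain $A[T]$: divisibility by a linear factor says nothing about $\deg p$ by itself. What you need is that $B=A[T]/(p)$ is a domain --- true because $X$ is irreducible, hence so is the open subset $f^{-1}(\operatorname{Spec}A)$ --- so that $(p)$ is prime, $p$ is irreducible, and $T-c\mid p$ then forces $p$ to be a unit multiple of $T-c$; equivalently, when $d\geq2$ irreducibility of $p$ shows $p(c)\neq0$ automatically, so the degenerate case never occurs. Second, the passage from $A[b]=B$ to $B\cong A[T]/(p)$ with $p$ monic of degree $d$ deserves a sentence: take $p$ to be the characteristic polynomial of multiplication by $b$ on the locally free $A$-module $B$; Cayley--Hamilton gives a surjection $A[T]/(p)\twoheadrightarrow B$ of locally free modules of the same rank $d$, which is therefore an isomorphism. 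Finally, the appeal to Stein factorization is unnecessary: by the hypothesis of the lemma, $f$ is already finite \'etale over $Y^\circ$, so shrinking $Y$ to an affine neighborhood of $f(x)$ inside $Y^\circ$ puts you directly in the finite \'etale setting.
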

\begin{proof}
Taking compactificaiton and resolution of singularities, we may assume $X,Y$ are smooth projective varieties.
Let $H\subset X$ be any hypersurface that passes through $x$ and avoids $f^{-1}(f(x))\backslash\{x\}$, then $H$ satisfies desired condition because
\begin{enumerate}
\item By upper semi-continuity of fiber dimensions, we know that $f|_H$ is generically finite,
\item If $\mathrm{deg}(f|_H)\geq2$, then at least two points in $f^{-1}(x)$ lies in $H$, contradiction with $f^{-1}(f(x))\cap H=x$. 
\end{enumerate}Such $H$ can be taken in any very ample linear system on $X$.
\end{proof}

\begin{proposition}\label{uncountable}
The group $\bigoplus_{W\in U^{(1)}}\mathrm{H}^0(\eta_W,\mathbb{L}|_{\eta_W}(-1))$ is uncountable.
\end{proposition}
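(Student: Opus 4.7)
The plan is to construct uncountably many codimension-one $W\subset U$ for which $\mathrm{H}^0(\eta_W,\mathbb{L}|_{\eta_W}(-1))$ is non-zero; since distinct $W$ index distinct direct summands, this will force the direct sum to be uncountable. The two key ingredients will be isotriviality of $\mathcal{X}_U\to U$ (Lemma \ref{mondromy-action}) and Lemma \ref{split}.

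The first step will be to translate non-vanishing of the Galois-invariants into a splitting condition for the \'etale cover $f\colon\widetilde{U}\to U$. Since $\mathcal{X}_U$ becomes trivial after pullback along $f$, the local system $\mathbb{L}=R^1(\mathrm{pr}_1)_{U,*}\mathbb{Q}_l/\mathbb{Z}_l(1)$ has stalks isomorphic to $\mathrm{H}^1(F_m,\mathbb{Q}_l/\mathbb{Z}_l(1))$, and its monodromy factors through $\pi_1^{\mathrm{et}}(U)\twoheadrightarrow\mathrm{Gal}(\widetilde{U}/U)=\mu_m^{\oplus 2}$. Consequently, the $G_{k(\eta_W)}$-action on $\mathbb{L}|_{\eta_W}$ factors through the decomposition group of $\eta_W$ inside $\mu_m^{\oplus 2}$. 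When $f$ splits over $\eta_W$, i.e.\ $\widetilde{U}\times_U\mathrm{Spec}\,k(\eta_W)$ carries a $k(\eta_W)$-rational point, this decomposition group is trivial, and since the Tate twist is canonically trivialized over $\mathbb{C}$, the invariants $\mathrm{H}^0(\eta_W,\mathbb{L}|_{\eta_W}(-1))$ exhaust the full stalk $\mathrm{H}^1(F_m,\mathbb{Q}_l/\mathbb{Z}_l)$, which is non-zero because $F_m$ has positive genus for $m\geq 3$.

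The second step will be to produce uncountably many $W$ over which $f$ splits, by feeding Lemma \ref{split} a very ample linear system. I will pick smooth projective compactifications $\widetilde{X}\supset\widetilde{U}$ and $X\supset U$ together with an extension $\widetilde{f}\colon\widetilde{X}\to X$ of $f$, and fix a very ample linear system $|D|=\mathbb{P}^N$ on $\widetilde{X}$ with $N\geq 1$. The locus of $\widetilde{H}\in|D|$ which is smooth, irreducible, meets $\widetilde{U}$ non-trivially, and maps birationally onto $\widetilde{f}(\widetilde{H})$ is Zariski open; Lemma \ref{split} together with Bertini ensures it is non-empty, hence dense. For every such $\widetilde{H}$, setting $W=\widetilde{f}(\widetilde{H})\cap U$, birationality of $\widetilde{H}\to\widetilde{f}(\widetilde{H})$ shows that the generic point of $\widetilde{H}$ yields a $k(\eta_W)$-rational lift of $\eta_W\hookrightarrow U$, so $f$ splits over $\eta_W$.

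Finally, I will bound the fibers of $\widetilde{H}\mapsto W$: an irreducible $\widetilde{H}$ with $\widetilde{f}(\widetilde{H})=\overline{W}$ must be an irreducible component of the preimage $f^{-1}(W)$ in $\widetilde{U}$, which has at most $|\mu_m^{\oplus 2}|=m^2$ components. Since the source of the map is a non-empty Zariski open in $\mathbb{P}^N$, it is uncountable, and hence so is the image. Each resulting $W\in U^{(1)}$ contributes a non-zero summand, giving the claim. The bulk of the work has been packaged into Lemma \ref{split}; given that, the remaining arguments are a formal interplay between isotriviality and finite covers, and I do not expect any further serious obstacle.
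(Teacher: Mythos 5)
Your proposal is correct in substance and rests on the same two ingredients as the paper's proof: the trivialization of $\mathbb{L}$ after pullback along the Kummer cover $f\colon\widetilde{U}\to U$ (so that over any divisor where the cover splits, the invariants are the full stalk $\mathrm{H}^1(F_m,\mathbb{Q}_l/\mathbb{Z}_l)\neq 0$), and Lemma \ref{split}. Where you genuinely diverge is in how uncountability is extracted. The paper argues by contradiction: if only countably many $W$ contributed, then the complex points of the countably many curves $f^{-1}(W)$ could not cover the uncountable set $\widetilde{U}(\mathbb{C})$; choosing $u$ outside that union and applying Lemma \ref{split} through $u$ produces $W'=f(H)$ with nonzero invariants passing through $f(u)$, a contradiction. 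This uses nothing beyond the bare statement of Lemma \ref{split} and the fact that a countable union of proper subvarieties cannot exhaust $\mathbb{C}$-points. You instead build an uncountable family directly: a (claimed) nonempty Zariski-open subset of a very ample linear system $|D|\cong\mathbb{P}^N$, mapped at most $m^2$-to-one onto the set of relevant $W$'s. Your fiber bound is correct ($\widetilde{H}\cap\widetilde{U}$ is an irreducible component of $f^{-1}(W)$, of which there are at most $\deg f=m^2$), as is the reduction ``splitting over $\eta_W$ implies full invariants,'' which is just the paper's birationality step phrased via decomposition groups.

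The one step you assert without proof --- and it is the only non-formal step in your route --- is that the locus of $\widetilde{H}\in|D|$ mapping \emph{birationally} onto $\widetilde{f}(\widetilde{H})$ is Zariski open. This is not a Bertini-type statement, and for an arbitrary extension $\widetilde{f}$ of $f$ to arbitrarily chosen smooth compactifications (which is how you set things up) it needs an argument; if this locus were merely constructible-but-thin, your count would collapse. The claim is true here, and the clean fix uses the Galois structure you already invoked: take the equivariant compactification $\mathbb{P}^2\to\mathbb{P}^2$, $[a_0:a_1:a_2]\mapsto[a_0^m:a_1^m:a_2^m]$, with $G=\mu_m^{\oplus 2}$ acting linearly on the source. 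For an irreducible curve $\widetilde{H}$ meeting $\widetilde{U}$ (hence not contained in the ramification locus) one has $f^{-1}(f(\widetilde{H}))=\bigcup_{g\in G}g\widetilde{H}$, and the degree of $\widetilde{H}\to f(\widetilde{H})$ equals the order of the stabilizer $\{g\in G\colon g\widetilde{H}=\widetilde{H}\}$; since $G$ preserves $|\mathcal{O}(k)|$ and each fixed locus $\{\widetilde{H}\colon g\widetilde{H}=\widetilde{H}\}$ is a proper closed subvariety of $|D|$, birationality is precisely the complement of finitely many proper closed subsets, hence open. With that supplied, your proof is complete; note that the paper's contradiction argument sidesteps this issue entirely, which is what its less constructive route buys.
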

\begin{proof}
Let us consider the finite cover $f\colon \widetilde{U}=U\to U,[a_0:a_1:a_2]\mapsto [a_0^m:a_1^m:a_2^m]$. If the cardinality is countable, then necessarily $I=\{W\in U^{(1)}|\mathrm{H}^0(\eta_W,\mathbb{L}|_{\eta_W}(-1))\neq0\}$ is a countable set.
However, by Lemma \ref{split}, for any point $u\in \widetilde{U}(\mathbb{C})\backslash \bigcup_{W\in I} W(\mathbb{C})$, there exist a curve $H\subset U$ passing through $u$ such that $f|_{H}\colon H_u\to f(H)$ is birational. Then for $W'=f(H)$, we have $\mathrm{H}^0(\eta_{W'},\mathbb{L}|_{\eta_{W'}}(-1))=\mathrm{H}^0(\eta_H,f^*\mathbb{L}|_{\eta_H})\neq0$ because $f^*\mathbb{L}$ is trivial local system on $\widetilde{U}$ with stalk $(\mathbb{Q}_l/\mathbb{Z}_l)^{2g}$. 
\end{proof}

\begin{proof}[Proof of Theorem \ref{thm2}] 
By Proposition \ref{exact-sequence-cardinality}, Proposition \ref{uncountable} and Lemma \ref{countable}, we know that the $l$-primary part of relative Brauer group is calculated as the kernel of a map from an uncountable abelian group to a countable abelian group. Hence for all primes $l$, we have $\mathrm{card}(\mathrm{Br}(X_K)/(\mathrm{pr}_1)_K^*\mathrm{Br}(K))[l^\infty]>\aleph_0$. By Leray spectral sequence and Artin vanishing one has exact sequence $\xymatrix{\mathrm{Br}(K)\ar[r]&\mathrm{Br}(X_K)\ar[r]&\mathrm{H}^1(K,\mathrm{Pic}_{X/K})\ar[r]&0}$
hence $\mathrm{H}^1(K,\mathrm{Pic}_{X_K/K})$ is uncountable. Now Lemma \ref{lem-conn-components} shows that $\mathrm{H}^1(K,\mathrm{Pic}^0_{X/K})$ is a finite extension of $\mathrm{H}^1(U,\mathrm{Pic}_{X_K/K})$, hence also uncountable.
\end{proof}

\section{Partial results on \texorpdfstring{$\mathcal{M}_g$}{Mg}} In this section, we give some parallel results of Theorem \ref{thm1} and Theorem \ref{thm2} for the universal family $\pi\colon\mathcal{M}_{g,1}\to\mathcal{M}_g$ of smooth genus $g$ curves. Throughout this section we will assume $g\geq3$, so that $\mathcal{M}_g$ is generically automorphism free.
We use $K$ to denote the function field of $\mathcal{M}_g$, and denote generic fiber of $\pi\colon\mathcal{M}_{g,1}\to\mathcal{M}_g$ by $C/K$. We aim to show that
\begin{enumerate}
\item All torsors for the universal Jacobian over $\mathcal{M}_g$ are canonical.
\item Over the generic point, there are uncountably many non-canonical torsors.
\end{enumerate}
While we are not able to prove these, we reduce problem (1) to some explicit calculation of cohomology of mapping class groups, and we show that the universal Jacobian of the generic marked curve are necessarily trivial.

\subsection{Torsors of the universal Jacobian}
\begin{lemma}\label{lemma-M_g-connected}
We have short exact sequence
$$\xymatrix{0\ar[r]&\mathbb{Z}/(2g-2)\mathbb{Z}\ar[r]^-{\delta}&\mathrm{H}^1(\mathcal{M}_g,\mathrm{Pic}^0_{\mathcal{M}_{g,1}/\mathcal{M}_g})\ar[r]&\mathrm{H}^1(\mathcal{M}_g,\mathrm{Pic}_{\mathcal{M}_{g,1}/\mathcal{M}_g})\ar[r]&0},$$
where $\delta$ sends $\overline{d}\in\mathbb{Z}/(2g-2)\mathbb{Z}$ to the class of $[\mathrm{Pic}^d_{\mathcal{M}_{g,1}/\mathcal{M}_g}].$
\end{lemma}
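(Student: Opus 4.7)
The plan is to mirror the proof of Lemma~\ref{lem-conn-components} with $\mathcal{M}_g$ replacing $U$. First I would consider the short exact sequence of \'etale sheaves on $\mathcal{M}_g$
\begin{equation*}
\xymatrix{0\ar[r]&\mathrm{Pic}^0_{\mathcal{M}_{g,1}/\mathcal{M}_g}\ar[r]&\mathrm{Pic}_{\mathcal{M}_{g,1}/\mathcal{M}_g}\ar[r]^-{\deg}&\underline{\mathbb{Z}}_{\mathcal{M}_g}\ar[r]&0}
\end{equation*}
and extract its long exact sequence in \'etale cohomology. The description of the connecting map $\delta$ on the level of cocycles is then identical to the one in Lemma~\ref{lem-conn-components}.

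Next I would observe that $\mathrm{H}^1(\mathcal{M}_g,\underline{\mathbb{Z}})=\mathrm{Hom}_{\mathrm{cont}}(\pi_1^{\mathrm{et}}(\mathcal{M}_g),\mathbb{Z})=0$: the \'etale fundamental group of $\mathcal{M}_g$ is the profinite completion of the mapping class group $\Gamma_g$, so continuous homomorphisms to the discrete torsion-free group $\mathbb{Z}$ must vanish. This yields the surjection $\mathrm{H}^1(\mathcal{M}_g,\mathrm{Pic}^0_{\mathcal{M}_{g,1}/\mathcal{M}_g})\twoheadrightarrow\mathrm{H}^1(\mathcal{M}_g,\mathrm{Pic}_{\mathcal{M}_{g,1}/\mathcal{M}_g})$ and reduces the lemma to identifying $\mathrm{Im}(\deg_{\mathcal{M}_g}\colon\mathrm{Pic}_{\mathcal{M}_{g,1}/\mathcal{M}_g}(\mathcal{M}_g)\to\mathbb{Z})=(2g-2)\mathbb{Z}$.

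The inclusion $(2g-2)\mathbb{Z}\subseteq\mathrm{Im}(\deg_{\mathcal{M}_g})$ comes immediately from the relative dualizing sheaf $\omega_\pi\in\mathrm{Pic}(\mathcal{M}_{g,1})$, which has fiberwise degree $2g-2$. For the reverse inclusion, I would restrict any section of the relative Picard scheme to the generic fiber $C/K$; since $\mathrm{Pic}_{\mathcal{M}_{g,1}/\mathcal{M}_g}$ is separated and $\mathcal{M}_g$ is integral, this restriction is injective on each connected component and preserves degree. The strong Franchetta theorem cited in the introduction gives $\mathrm{Pic}(C)=\mathbb{Z}\cdot[\omega_C]$, whose degree image in $\mathbb{Z}$ is precisely $(2g-2)\mathbb{Z}$.

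The main obstacle is that a section of the relative Picard scheme over $\mathrm{Spec}(K)$ need not lift to an honest line bundle on $C$ (the obstruction is a Brauer class in $\mathrm{Br}(K)$), so strong Franchetta alone does not directly bound $\mathrm{Im}(\deg_{\mathcal{M}_g})$; equivalently one must show the period of $[\mathrm{Pic}^1_{\mathcal{M}_{g,1}/\mathcal{M}_g}]$ is exactly $2g-2$ and not merely a proper divisor of it. I would attack this in parallel with Proposition~\ref{image-deg}: via Kummer and smooth base change, $\mathrm{Pic}^0_{\mathcal{M}_{g,1}/\mathcal{M}_g}[n](\mathcal{M}_g)$ identifies with the $\Gamma_g$-invariants of $\mathrm{H}^1(\Sigma_g,\mu_n)\cong(\mathbb{Z}/n\mathbb{Z})^{2g}\otimes\mu_n$; for $g\geq 3$ the group $\Gamma_g$ surjects onto $\mathrm{Sp}(2g,\mathbb{Z}/n\mathbb{Z})$ acting by the standard symplectic representation, whose invariants vanish. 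Combined with the Leray analysis of Section~\ref{Leray} controlling the kernel of $\mathrm{Br}(\mathcal{M}_g)\to\mathrm{Br}(\mathcal{M}_{g,1})$, this should pin down the image of $\deg_{\mathcal{M}_g}$ and finish the proof.
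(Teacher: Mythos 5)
Your structural reduction is exactly the paper's: the sheaf sequence $0\to\mathrm{Pic}^0_{\mathcal{M}_{g,1}/\mathcal{M}_g}\to\mathrm{Pic}_{\mathcal{M}_{g,1}/\mathcal{M}_g}\to\underline{\mathbb{Z}}\to0$, the vanishing $\mathrm{H}^1(\mathcal{M}_g,\underline{\mathbb{Z}})=\mathrm{Hom}_{\mathrm{cont}}(\pi_1^{\mathrm{et}}(\mathcal{M}_g),\mathbb{Z})=0$, the reduction to computing $\mathrm{Im}(\deg)$, and the inclusion $(2g-2)\mathbb{Z}\subseteq\mathrm{Im}(\deg)$ via $\omega_\pi$. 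But the ``main obstacle'' you then raise is not an obstacle: you are conflating the classical Franchetta theorem (the statement $\mathrm{Pic}(C)=\mathbb{Z}\cdot[\omega_C]$ about honest line bundles, which is what the introduction states) with the \emph{strong} Franchetta theorem, which is what the paper's proof actually invokes (\cite[Theorem 5.1]{Schroeer-Strong-Franchetta}). The strong form asserts precisely that $\mathrm{Pic}_{C/K}(K)=\mathbb{Z}\cdot[\omega_C]$, i.e.\ it is a statement about $K$-rational points of the Picard scheme, with the Brauer obstruction already accounted for. Since restriction of sections to the generic fiber preserves degree (and is in fact bijective: a rational section of a torsor under an abelian scheme over a smooth base extends, which is the paper's remark $\mathrm{Pic}_{\mathcal{M}_{g,1}/\mathcal{M}_g}(\mathcal{M}_g)=\mathrm{Pic}_{C/K}(K)$), this immediately gives $\mathrm{Im}(\deg)\subseteq(2g-2)\mathbb{Z}$ for every $g\geq3$. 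That one citation is the entire remaining content of the paper's proof.

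Your substitute argument, besides being unnecessary, does not close the gap you perceived. First, to inject $\mathrm{Pic}^0_{\mathcal{M}_{g,1}/\mathcal{M}_g}(\mathcal{M}_g)$ into $\ker\left(\mathrm{Br}(\mathcal{M}_g)\to\mathrm{Br}(\mathcal{M}_{g,1})\right)$ via $\mathrm{d}_2^{0,1}$ (so that your vanishing of symplectic invariants has any bite) you must first know that every line bundle on $\mathcal{M}_{g,1}$ of fiberwise degree zero is a pullback from $\mathcal{M}_g$ --- itself a Franchetta-type input (Harer's computation of $\mathrm{Pic}(\mathcal{M}_{g,1})$, or classical Franchetta again). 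Second, and more seriously, knowing $\mathrm{Pic}^0_{\mathcal{M}_{g,1}/\mathcal{M}_g}(\mathcal{M}_g)=0$ controls only the kernel of $\deg$; to pin down its image you must bound the cokernel of $\mathrm{Pic}(\mathcal{M}_{g,1})/\pi^*\mathrm{Pic}(\mathcal{M}_g)\to\mathrm{Pic}_{\mathcal{M}_{g,1}/\mathcal{M}_g}(\mathcal{M}_g)$, which embeds into $\mathrm{Br}(\mathcal{M}_g)$. By Lemma \ref{Pirisi-lemma} this Brauer group vanishes only for $g\geq4$, while $\mathrm{Br}(\mathcal{M}_3)\cong\mathbb{Z}/2\mathbb{Z}$; so for $g=3$ your route cannot exclude $\mathrm{Im}(\deg)=2\mathbb{Z}$ (period $2$ rather than $4$), i.e.\ a degree-$2$ rational point of $\mathrm{Pic}_{C/K}$ hiding behind the nontrivial Brauer class of $\mathcal{M}_3$ --- exactly the kind of phenomenon that only the strong Franchetta theorem rules out. (Note also that once $\mathrm{Br}(\mathcal{M}_g)=0$ is granted, every section is a line bundle class and classical Franchetta finishes the argument, so your symplectic-invariants step would then be redundant.) The lemma is asserted for all $g\geq3$, so the $g=3$ case is a genuine gap in your proposal.
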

\begin{proof}
As in the proof of Lemma \ref{lem-conn-components}, it suffices to show that cokernel of the degree map $$\mathrm{deg}_U\colon\mathrm{Pic}_{\mathcal{M}_{g,1}/\mathcal{M}_g}(U)\to\mathbb{Z}$$ is $\mathbb{Z}/(2g-2)\mathbb{Z}$. 
Since $U$ is smooth, every rational section of an abelian scheme over $U$ necessarily extends to a regular section, so
$\mathrm{Pic}_{\mathcal{M}_{g,1}/\mathcal{M}_g}(U)=\mathrm{Pic}_{C/K}(K)$, then we conclude from the strong Franchetta theorem \cite[Theorem 5.1]{Schroeer-Strong-Franchetta}. 
\end{proof}

\begin{lemma}\label{Pirisi-lemma} If $g\geq4$, then 
$\mathrm{Br}(\mathcal{M}_{g,n})=0$ for all $n\geq0$. \end{lemma}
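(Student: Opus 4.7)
The plan is to combine Pirisi's computation of the Brauer group of $\mathcal{M}_g$ with an inductive reduction through the universal curve. The base case $n=0$, i.e., $\mathrm{Br}(\mathcal{M}_g) = 0$ for $g \geq 4$ over $\mathbb{C}$, is one of the outputs of \cite{Pirisi-Brauer-Group-Universal} (and partially \cite{Pirisi-Brauer-genus-3}), obtained via equivariant cohomological-invariant techniques on a global quotient presentation of $\mathcal{M}_g$. If Pirisi's framework already treats the marked case uniformly, the lemma for all $n \geq 0$ is then a direct citation and there is nothing further to do.

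For a self-contained inductive reduction on $n$, I would factor the forgetful map as
\[
\mathcal{M}_{g,n+1} \hookrightarrow \mathcal{C}_{g,n} \xrightarrow{\pi} \mathcal{M}_{g,n},
\]
where $\pi \colon \mathcal{C}_{g,n} \to \mathcal{M}_{g,n}$ is the smooth proper universal curve and the open embedding omits the $n$ tautological sections $\sigma_1, \ldots, \sigma_n \subset \mathcal{C}_{g,n}$. Applying the ramification sequence of Lemma \ref{ramificaiton-lemma} to this open embedding bounds the quotient $\mathrm{Br}(\mathcal{M}_{g,n+1})[l^\infty] / \mathrm{Br}(\mathcal{C}_{g,n})[l^\infty]$ by $\bigoplus_{i=1}^n \mathrm{H}^1_{\mathrm{et}}(\sigma_i, \mathbb{Q}_l/\mathbb{Z}_l)$; since each $\sigma_i \cong \mathcal{M}_{g,n}$ and the mapping class group $\mathrm{Mod}_{g,n}$ has trivial abelianization for $g \geq 3$ (Powell--Harer), each $\mathrm{H}^1$ summand vanishes, giving $\mathrm{Br}(\mathcal{M}_{g,n+1})[l^\infty] \cong \mathrm{Br}(\mathcal{C}_{g,n})[l^\infty]$. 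The Leray spectral sequence for $\pi$ (parallel to the one set up in Section \ref{Leray}) then relates $\mathrm{Br}(\mathcal{C}_{g,n})$ to $\mathrm{Br}(\mathcal{M}_{g,n})$, which is zero by the inductive hypothesis, plus a residual term $\mathrm{H}^1_{\mathrm{et}}(\mathcal{M}_{g,n}, \mathrm{Pic}_{\mathcal{C}_{g,n}/\mathcal{M}_{g,n}})$.

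The main obstacle of the inductive route is precisely this last Leray term: its vanishing is essentially the content of Question \ref{que1}, which is explicitly left open in the paper, so the induction cannot close without further input. This is why the lemma is attributed to Pirisi and invoked as a cited result rather than derived from the earlier material here; the clean path is to quote his calculation, and any attempt at a self-contained proof must either bypass Question \ref{que1} (for instance by a direct equivariant-cohomology attack on $\mathcal{M}_{g,n}$ in the spirit of \cite{Pirisi-Brauer-Group-Universal}) or reprove a strong enough version of it.
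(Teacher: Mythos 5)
Your proposal matches the paper: the lemma is proved there simply by citing \cite[Theorem 4.1]{Pirisi-Brauer-Group-Universal}, which already covers $\mathcal{M}_{g,n}$ for all $n\geq0$ (the paper only adds that the cited argument, comparing the Brauer group with the analytic Brauer group, works over $\mathbb{C}$, and remarks on the extra $\mathbb{Z}/2\mathbb{Z}$ in genus $3$). Your diagnosis of why the self-contained induction cannot close is also consistent with the paper's logical structure: the vanishing of the Leray obstruction term $\mathrm{H}^1_{\mathrm{et}}(\mathcal{M}_{g,n},\mathrm{Pic}_{\mathcal{M}_{g,n+1}/\mathcal{M}_{g,n}})$ is deduced in Proposition \ref{marked} \emph{from} this lemma, so invoking it inside the induction would be circular.
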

\begin{proof}
See
{\cite[Theorem 4.1]{Pirisi-Brauer-Group-Universal}} for a proof.
The proof goes by comparing the Brauer group with analytic Brauer group, so the argument works over $\mathbb{C}$. When $g=3$, there is an extra element $\mathrm{Br}(\mathcal{M}_{3})\cong\mathbb{Z}/2\mathbb{Z}$, see \cite[Corollary 31]{Pirisi-Brauer-genus-3}. General calculation of $\mathrm{Br}(\mathcal{M}_{3,n})$ is not yet known.
\end{proof} 
Now we reduce the problem of classifiying torsors of universal Jacobian to a purely group theoretic question.
Let $\mathrm{Mod}(g)$ and $\mathrm{Mod}(g,1)$ be the fundamental group of $\mathcal{M}_g$ and $\mathcal{M}_{g,1}$, which are also the mapping class group of a smooth projective genus $g$ curve, and a marked genus $g$ curve. The projection $\pi\colon\mathcal{M}_{g,1}\to\mathcal{M}_g$ induces a map of the fundamental groups $p\colon\mathrm{Mod}(g,1)\to\mathrm{Mog}(g)$.

\begin{proposition}\label{part-res-1}
A necessary and sufficient condition for $$\mathrm{H}^1_{\mathrm{et}}(\mathcal{M}_g,\mathrm{Pic}^0_{\mathcal{M}_{g,1}/\mathcal{M}_g})=\langle[\mathrm{Pic}^1_{\mathcal{M}_{g,1}/\mathcal{M}_g}]\rangle\cong\mathbb{Z}/(2g-2)\mathbb{Z}$$ is that: for all $n$, the pullback map induces injection on third cohomology
$$p^*\colon \mathrm{H}^3(\mathrm{Mod}(g),\mathbb{Z}/n\mathbb{Z})\to\mathrm{H}^3(\mathrm{Mod}(g,1),\mathbb{Z}/n\mathbb{Z}).$$

\end{proposition}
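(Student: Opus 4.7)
The plan is to reduce the statement in three stages, paralleling the strategy of Section \ref{Leray}. First, Lemma \ref{lemma-M_g-connected} converts the desired identity $\mathrm{H}^1_{\mathrm{et}}(\mathcal{M}_g,\mathrm{Pic}^0_{\mathcal{M}_{g,1}/\mathcal{M}_g})=\langle[\mathrm{Pic}^1_{\mathcal{M}_{g,1}/\mathcal{M}_g}]\rangle$ into the vanishing $\mathrm{H}^1_{\mathrm{et}}(\mathcal{M}_g,\mathrm{Pic}_{\mathcal{M}_{g,1}/\mathcal{M}_g})=0$. To analyze this group, I would run the Leray spectral sequence for $\pi\colon\mathcal{M}_{g,1}\to\mathcal{M}_g$ with coefficients $\mathbb{G}_m$, exactly as in Section \ref{Leray}; invoking $\mathrm{Br}(\mathcal{M}_{g,1})=0$ from Lemma \ref{Pirisi-lemma} (valid for $g\geq 4$), the five-term exact sequence yields a canonical isomorphism
$$\mathrm{H}^1_{\mathrm{et}}(\mathcal{M}_g,\mathrm{Pic}_{\mathcal{M}_{g,1}/\mathcal{M}_g})\cong\ker\bigl(\pi^*\colon\mathrm{H}^3_{\mathrm{et}}(\mathcal{M}_g,\mathbb{G}_m)\to\mathrm{H}^3_{\mathrm{et}}(\mathcal{M}_{g,1},\mathbb{G}_m)\bigr),$$
so that the vanishing becomes injectivity of this $\mathbb{G}_m$-pullback.

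Second, I would descend from $\mathbb{G}_m$ to finite coefficients via Kummer. Since $\mathrm{H}^3_{\mathrm{et}}(\cdot,\mathbb{G}_m)$ is torsion on these smooth DM stacks, injectivity of $\pi^*$ can be tested after restriction to the $n$-torsion subgroup for every integer $n$. The Kummer long exact sequence $1\to\mu_n\to\mathbb{G}_m\xrightarrow{n}\mathbb{G}_m\to 1$, combined with the vanishing $\mathrm{Br}(\mathcal{M}_g)=\mathrm{Br}(\mathcal{M}_{g,1})=0$ from Lemma \ref{Pirisi-lemma}, produces compatible isomorphisms $\mathrm{H}^3_{\mathrm{et}}(\cdot,\mu_n)\xrightarrow{\sim}\mathrm{H}^3_{\mathrm{et}}(\cdot,\mathbb{G}_m)[n]$ on both stacks. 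Over $\mathbb{C}$ the sheaf $\mu_n$ is canonically the constant sheaf $\underline{\mathbb{Z}/n\mathbb{Z}}$, so the condition reduces to injectivity of $\pi^*\colon\mathrm{H}^3_{\mathrm{et}}(\mathcal{M}_g,\mathbb{Z}/n\mathbb{Z})\hookrightarrow\mathrm{H}^3_{\mathrm{et}}(\mathcal{M}_{g,1},\mathbb{Z}/n\mathbb{Z})$ for every $n$.

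Third, I would pass to group cohomology. Since $g\geq 3$, both $\mathcal{M}_g$ and $\mathcal{M}_{g,1}$ are $K(\pi,1)$ Deligne--Mumford stacks whose fundamental groups are $\mathrm{Mod}(g)$ and $\mathrm{Mod}(g,1)$ respectively; Artin's comparison between étale and singular cohomology with finite constant coefficients then identifies the groups above with $\mathrm{H}^3(\mathrm{Mod}(g),\mathbb{Z}/n\mathbb{Z})$ and $\mathrm{H}^3(\mathrm{Mod}(g,1),\mathbb{Z}/n\mathbb{Z})$, and under these identifications $\pi^*$ becomes $p^*$. Chaining the three reductions yields the stated equivalence.

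The main technical obstacle I foresee is the last step: justifying the $K(\pi,1)$ identification at the level of the DM stacks themselves, not merely at the level of coarse moduli. The cleanest route is to replace $\mathcal{M}_g$ by a finite étale cover coming from a level-structure moduli space, which is a genuine smooth complex manifold whose analytification has the homotopy type of $B\mathrm{Mod}(g)$, carry out the comparison there, and descend using Hochschild--Serre along the Galois action of the finite level-structure group. The other steps, while invoking several exact sequences, are essentially bookkeeping.
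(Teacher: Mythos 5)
Your proposal is correct and follows essentially the same route as the paper: Lemma \ref{lemma-M_g-connected} plus the $\mathbb{G}_m$-Leray sequence and Lemma \ref{Pirisi-lemma} reduce the statement to injectivity of $\pi^*$ on $\mathrm{H}^3_{\mathrm{et}}(\cdot,\mathbb{G}_m)$, then torsionness and Kummer reduce to $\mu_n$-coefficients, and finally one passes to cohomology of mapping class groups. The only difference is at the last step, where the paper avoids your level-structure detour by applying the Hochschild--Serre spectral sequence directly to the Teichm\"uller uniformization $\mathrm{H}^p(\Gamma,\mathrm{H}^q(\mathbb{C}^{3g-3},\mu_n))\Rightarrow\mathrm{H}^p(\mathcal{M}_g,\mu_n)$, which handles the stacky $K(\pi,1)$ issue you flag in one stroke.
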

\begin{proof}
The Leray spectral sequence for $\mathbb{G}_m$ along $\pi$ gives the following short exact sequence 
$$\xymatrix{0\ar[r]&\frac{\mathrm{Br}(\mathcal{M}_{g,1})}{\pi^*\mathrm{Br}(\mathcal{M}_{g})}\ar[r]&\mathrm{H}_{\mathrm{et}}^1(\mathcal{M}_g,\mathrm{Pic}_{\mathcal{M}_{g,1}/\mathcal{M}_g})\ar[r]&\mathrm{H}_{\mathrm{et}}^3(\mathcal{M}_{g},\mathbb{G}_m)\ar[r]^-{\pi^*}&\mathrm{H}_{\mathrm{et}}^3(\mathcal{M}_{g,1},\mathbb{G}_m)}$$
By Lemma \ref{lemma-M_g-connected} and Lemma \ref{Pirisi-lemma}, the calculation $\mathrm{H}^1_{\mathrm{et}}(\mathcal{M}_g,\mathrm{Pic}^0_{\mathcal{M}_{g,1}/\mathcal{M}_g})\cong\mathbb{Z}/(2g-2)\mathbb{Z}$ is equivalent to injectivity of $\pi^*\colon\mathrm{H}_{\mathrm{et}}^3(\mathcal{M}_g,\mathbb{G}_m)\to\mathrm{H}_{\mathrm{et}}^3(\mathcal{M}_{g,1},\mathbb{G}_m)$. 
Note that $\mathrm{H}_{\mathrm{et}}^3(\mathcal{M}_g,\mathbb{G}_m)$ is torsion, it suffices to show that for all $n\geq1$, $\mathrm{H}_{\mathrm{et}}^3(\mathcal{M}_{g},\mathbb{G}_m)[n]\to\mathrm{H}_{\mathrm{et}}^3(\mathcal{M}_{g,1},\mathbb{G}_m)[n]$ is injective. By Lemma \ref{Pirisi-lemma} and Kummer sequence, it is equivalent to show injectivity of the pullback map $$\pi^*\colon\mathrm{H}_{\mathrm{et}}^3(\mathcal{M}_g,\mu_n)\to\mathrm{H}_{\mathrm{et}}^3(\mathcal{M}_{g,1},\mu_n).$$
Note that \'etale cohomology is the same as complex cohomology  for finite torsion sheaves on quasi-projective varieties, by
the Hochschild-Serre spectral sequence   
for the Teichm\"uller uniformization
$$\mathrm{H}^p(\Gamma,\mathrm{H}^q(\mathbb{C}^{3g-3},\mu_n))\Rightarrow\mathrm{H}^p(\mathcal{M}_g,\mu_n)$$ we have $\mathrm{H}_{\mathrm{et}}^3(\mathcal{M}_g,\mu_n)\cong\mathrm{H}^3(\mathrm{Mod}(g),\mathbb{Z}/n\mathbb{Z})$, and similarly $\mathrm{H}_{\mathrm{et}}^3(\mathcal{M}_{g,1},\mu_n)\cong\mathrm{H}^3(\mathrm{Mod}(g,1),\mathbb{Z}/n\mathbb{Z})$, by naturality of spectral sequence, the problem is equivalent to show injectivity of $p^*\colon\mathrm{H}^3(\mathrm{Mod}(g),\mu_n)\to\mathrm{H}^3(\mathrm{Mod}(g,1),\mu_n)$. \end{proof}
The question can also be posed for the universal family of $n$-marked curves $\pi_n\colon\mathcal{M}_{g,n+1}\to\mathcal{M}_{g,n}$. We show that the universal Jacobian of the generic marked genus $g$ curves do not admit non-trivial torsors:
\begin{proposition}\label{marked}For all  $g\geq4$, $n\geq1$, we have $\mathrm{H}^1(\mathcal{M}_{g,n},\mathrm{Pic}^0_{\mathcal{M}_{g,n+1}/\mathcal{M}_{g,n}})=0.$
\end{proposition}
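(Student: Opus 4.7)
The plan is to mirror the template of Lemma \ref{lemma-M_g-connected} and Proposition \ref{part-res-1}, with one decisive new ingredient: for $n\ge 1$ each marked point of $\mathcal{M}_{g,n}$ defines a tautological section $\sigma_i\colon\mathcal{M}_{g,n}\to\mathcal{M}_{g,n+1}$ of $\pi_n$, and these sections simultaneously remove the two obstructions that remained open in the unmarked case.

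First, from the short exact sequence $0\to\mathrm{Pic}^0_{\mathcal{M}_{g,n+1}/\mathcal{M}_{g,n}}\to\mathrm{Pic}_{\mathcal{M}_{g,n+1}/\mathcal{M}_{g,n}}\to\underline{\mathbb{Z}}\to 0$, the long exact sequence of Lemma \ref{lem-conn-components} specializes to
$$\mathrm{Pic}_{\mathcal{M}_{g,n+1}/\mathcal{M}_{g,n}}(\mathcal{M}_{g,n})\xrightarrow{\deg}\mathbb{Z}\xrightarrow{\delta}\mathrm{H}^1(\mathcal{M}_{g,n},\mathrm{Pic}^0_{\mathcal{M}_{g,n+1}/\mathcal{M}_{g,n}})\to\mathrm{H}^1(\mathcal{M}_{g,n},\mathrm{Pic}_{\mathcal{M}_{g,n+1}/\mathcal{M}_{g,n}})\to 0.$$
Since $[\sigma_1]$ has fiberwise degree $1$, the map $\deg$ is surjective, hence $\delta=0$, and we conclude $\mathrm{H}^1(\mathcal{M}_{g,n},\mathrm{Pic}^0_{\mathcal{M}_{g,n+1}/\mathcal{M}_{g,n}})\cong\mathrm{H}^1(\mathcal{M}_{g,n},\mathrm{Pic}_{\mathcal{M}_{g,n+1}/\mathcal{M}_{g,n}})$. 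It therefore suffices to kill the right-hand side.

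Next, exactly as in Proposition \ref{part-res-1}, the Leray spectral sequence for $\mathbb{G}_m$ along $\pi_n$ produces the exact sequence
$$\mathrm{Br}(\mathcal{M}_{g,n+1})\to\mathrm{H}^1(\mathcal{M}_{g,n},\mathrm{Pic}_{\mathcal{M}_{g,n+1}/\mathcal{M}_{g,n}})\to\mathrm{H}^3(\mathcal{M}_{g,n},\mathbb{G}_m)\xrightarrow{\pi_n^*}\mathrm{H}^3(\mathcal{M}_{g,n+1},\mathbb{G}_m).$$
The hypothesis $g\ge 4$ and Lemma \ref{Pirisi-lemma} kill $\mathrm{Br}(\mathcal{M}_{g,n+1})$. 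The second use of the section is the decisive one: $\pi_n\circ\sigma_1=\mathrm{id}_{\mathcal{M}_{g,n}}$ shows $\sigma_1^*$ is a right inverse of $\pi_n^*$ on every \'etale cohomology group, so $\pi_n^*$ is split-injective on $\mathrm{H}^3(-,\mathbb{G}_m)$, and the middle group vanishes as required.

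The essentially only obstacle is bookkeeping: one must treat $\pi_n\colon\mathcal{M}_{g,n+1}\to\mathcal{M}_{g,n}$ as the total space of the universal curve over $\mathcal{M}_{g,n}$ so that the $\sigma_i$ are genuine sections of $\pi_n$, and check that Lemma \ref{Pirisi-lemma} still applies in this setup (it does, since Brauer groups of smooth stacks inject under open restriction and the open configuration stratum is covered by Pirisi's vanishing). Once this is in place, the presence of $\sigma_1$ supplies automatically the $\mathrm{H}^3$-injectivity which was left as an open question for $p^*\colon\mathrm{H}^3(\mathrm{Mod}(g),\mathbb{Z}/n)\to\mathrm{H}^3(\mathrm{Mod}(g,1),\mathbb{Z}/n)$ in Proposition \ref{part-res-1}.
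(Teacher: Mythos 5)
Your proof is correct and takes essentially the same route as the paper: the paper likewise uses the tautological sections of $\pi_n$ twice, once to trivialize the torsors $\mathrm{Pic}^d_{\mathcal{M}_{g,n+1}/\mathcal{M}_{g,n}}$ (your surjectivity of $\deg$, giving $\mathrm{H}^1(\mathrm{Pic}^0)\cong\mathrm{H}^1(\mathrm{Pic})$ via Lemma \ref{lemma-M_g-connected}) and once to split $\pi_n^*$ on $\mathrm{H}^3(-,\mathbb{G}_m)$, then kills the Brauer term by Lemma \ref{Pirisi-lemma} in the Leray sequence of Proposition \ref{part-res-1}. Your closing worry about re-checking Lemma \ref{Pirisi-lemma} is unnecessary, since that lemma is already stated for $\mathcal{M}_{g,n}$ for all $n\geq0$ when $g\geq4$.
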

\begin{proof}Note that $\pi_n\colon \mathcal{M}_{g,n+1}\to\mathcal{M}_{g,n}$ admit sections $\delta_i$ given by the $i$-th marked point, so the pullback map $\pi_n^*\colon\mathrm{H}^3(\mathcal{M}_{g,n},\mathbb{G}_m)\to\mathrm{H}^3(\mathcal{M}_{g,n+1},\mathbb{G}_m)$ splits as a direct summand via $\delta_1^*$ thus naturally injective. Since the torsors $\mathrm{Pic}^d_{\mathcal{M}_{g,n+1}/\mathcal{M}_{g,n}}$ are trivial, by Lemma \ref{lemma-M_g-connected} we have $\mathrm{H}^1(\mathcal{M}_{g,n},\mathrm{Pic}^0_{\mathcal{M}_{g,n+1}/\mathcal{M}_{g,n}})=\mathrm{H}^1(\mathcal{M}_{g,n},\mathrm{Pic}_{\mathcal{M}_{g,n+1}/\mathcal{M}_{g,n}})$
Then we use the same argument as in Proposition \ref{marked}.
\end{proof}

\subsection{Over the generic point}
We wish to show that for $g\geq3$, the Jacobian of the generic genus $g$ curve has uncountably many non-isomorphic torsors. The proof does not follow from Theorem \ref{thm2}, the main difference is that we do not have an analogue of Lemma \ref{lemma-10-vanishing}: we do not know if $\mathrm{H}^3(\mathcal{M}_g\backslash\overline{Z},\mu_{l}^r)=0$ holds for a cofinal system of divisors $\overline{Z}\subset\mathcal{M}_g$. Other non-vanishing terms only contribute a countable part to the final result.

Let $F\subset \mathcal{M}_{g}$ be a closed substack of codimension at least $2$. Let $Z\subset \mathcal{M}_g-F$ be a close embedded smooth substack of codimension $1$, comparing the ramification sequences, we get the following commutative diagram:
$$\xymatrix{
    \frac{\mathrm{Br}(\mathcal{M}_g\backslash\overline{Z})}{\pi^*\mathrm{Br}(\mathcal{M}_g\backslash F)}[l^\infty]\ar[d]^{p_2}\ar@{^(->}[r]&\mathrm{H}^1(Z,\mathbb{Q}_l/\mathbb{Z}_l)\ar[d]^{p_3}\ar[r]^-{i_{Z,*}}&\mathrm{H}^3(\mathcal{M}_g\backslash F,\mathbb{Q}_l/\mathbb{Z}_l(1))\ar[d]^{p_4}\ar[r]&\mathrm{H}^3(\mathcal{M}_g\backslash\overline{Z},\mathbb{Q}_l/\mathbb{Z}_l(1))\ar[d]\\
\frac{\mathrm{Br}(\mathcal{M}_{g,1}\backslash\pi^{-1}\overline{Z})}{\mathrm{Br}(\mathcal{M}_{g,1}\backslash\pi^{-1}F)}[l^\infty]\ar@{^(->}[r]&\mathrm{H}^1(\pi^{-1}Z,\mathbb{Q}_l/\mathbb{Z}_l)\ar[r]&\mathrm{H}^3(\mathcal{M}_{g,1}\backslash\pi^{-1}F,\mathbb{Q}_l/\mathbb{Z}_l(1))\ar[r]^-{r_Z}&\mathrm{H}^3(\mathcal{M}_{g,1}\backslash\pi^{-1}\overline{Z},\mathbb{Q}_l/\mathbb{Z}_l(1))
.}$$

\begin{proposition}\label{part-res-2}Let $C/K$ be the generic fiber of $\pi\colon\mathcal{M}_{g,1}\to\mathcal{M}_g$, then
$$\mathrm{card}\left(\mathrm{colim}_{F,Z}\left(\frac{\mathrm{Ker}(r_Z)}{\mathrm{Im}(p_4\circ i_{Z,*})}\right)\right)\leq\aleph_0\Rightarrow\mathrm{card}\left(\mathrm{Br}(C)/\mathrm{Br}(K)\right)>\aleph_0.$$
\end{proposition}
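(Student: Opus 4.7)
The plan is to mirror the proof of Theorem \ref{thm2} with $\pi\colon\mathcal{M}_{g,1}\to\mathcal{M}_g$ in place of $(\mathrm{pr}_1)_U$. First I would truncate the two four-term rows of the displayed diagram at the image of the Gysin map $i_{*}$, obtaining a pair of short exact sequences $0\to A_i\to B_i\to C_i'\to 0$ with $C_i'=\mathrm{Im}(i_{*})$, connected by the vertical maps $p_2$, $p_3$, and $p_4|_{C_1'}$. The snake lemma then yields
\begin{equation*}
\ker(p_4|_{C_1'})\to\mathrm{coker}(p_2)\to\mathrm{coker}(p_3)\to\mathrm{coker}(p_4|_{C_1'})\to 0.
\end{equation*}
A direct diagram chase identifies $\mathrm{coker}(p_4|_{C_1'})$ with $\ker(r_Z)/\mathrm{Im}(p_4\circ i_{Z,*})$ as it appears in the statement, and, as in Lemma \ref{coker-h1}, Leray along $\pi|_{\pi^{-1}Z}$ identifies $\mathrm{coker}(p_3)$ with $\mathrm{H}^0(Z,R^1\pi_{Z,*}\mathbb{Q}_l/\mathbb{Z}_l)$.

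Next I would pass to the directed colimit over pairs $(F,Z)$ with $F$ of codimension $\geq 2$ in $\mathcal{M}_g$ and $Z$ a smooth divisor in $\mathcal{M}_g\backslash F$. As in Proposition \ref{exact-sequence-cardinality}, the colimit of $\mathrm{coker}(p_2)$ recovers the $l$-primary part of the relative Brauer group $\mathrm{Br}(C)/\pi^{*}\mathrm{Br}(K)$ (using Lemma \ref{Pirisi-lemma} to absorb $\mathrm{Br}(\mathcal{M}_{g,1})$ when $g\geq 4$), while the colimit of $\mathrm{coker}(p_3)$ becomes $\bigoplus_{W\in\mathcal{M}_g^{(1)}}\mathrm{H}^0(\eta_W,R^1\pi_{*}\mathbb{Q}_l/\mathbb{Z}_l|_{\eta_W})$. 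By the standing hypothesis the colimit of $\mathrm{coker}(p_4|_{C_1'})$ is countable, so the four-term exact sequence above forces $(\mathrm{Br}(C)/\pi^{*}\mathrm{Br}(K))[l^\infty]$ to be uncountable as soon as the direct sum of generic invariants is uncountable. The passage to uncountability of $\mathrm{Br}(C)/\mathrm{Br}(K)$ is then formal via the Leray spectral sequence and Lemma \ref{lemma-M_g-connected}, exactly as at the end of the proof of Theorem \ref{thm2}.

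To see that $\bigoplus_{W\in\mathcal{M}_g^{(1)}}\mathrm{H}^0(\eta_W,R^1\pi_{*}\mathbb{Q}_l/\mathbb{Z}_l|_{\eta_W})$ is uncountable, I would mimic Proposition \ref{uncountable}. Fix a principal level-$n$ cover $f\colon\mathcal{M}_g[n]\to\mathcal{M}_g$ with $n\geq 3$, which is a finite étale cover whose source is a quasi-projective scheme; on it $f^*R^1\pi_{*}\mu_n$ becomes the constant sheaf $(\mathbb{Z}/n)^{2g}$. Applying Lemma \ref{split} to $f$ yields, through any fixed generic point of $\mathcal{M}_g[n]$, uncountably many hypersurfaces $H\subset\mathcal{M}_g[n]$ mapping birationally onto their images $W=f(H)\subset\mathcal{M}_g$; for each such $W$ the generic stalk of $R^1\pi_{*}\mathbb{Q}_l/\mathbb{Z}_l$ acquires non-zero invariants coming from the trivialization on the cover, so the direct sum has uncountably many non-zero summands.

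The main obstacle lies in justifying Lemma \ref{split} on the DM stack $\mathcal{M}_g$: the argument should descend through the scheme-valued cover $\mathcal{M}_g[n]$, but one needs to control that distinct hypersurfaces in $\mathcal{M}_g[n]$ remain distinct after pushforward to $\mathcal{M}_g$. A secondary obstacle, which the author flags, is that the standing hypothesis on $\mathrm{coker}(p_4|_{C_1'})$ may well fail, since $\mathcal{M}_g$ lacks the Artin vanishing used in Lemma \ref{lemma-10-vanishing}; this is why the proposition is stated as a sufficient condition rather than an unconditional uncountability result.
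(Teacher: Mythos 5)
Your proposal is correct and follows essentially the same route as the paper's own proof: the same truncation of the two ramification rows at $\mathrm{Im}(i_{*})$, the same snake-lemma sequence identifying $\mathrm{coker}(p_3)\cong\mathrm{H}^0(Z,R^1\pi_{*}\mathbb{Q}_l/\mathbb{Z}_l|_Z)$ and $\mathrm{coker}\bigl(p_4|_{\mathrm{Im}(i_{Z,*})}\bigr)\cong\mathrm{Ker}(r_Z)/\mathrm{Im}(p_4\circ i_{Z,*})$, the same directed colimit over divisors $W$ with $F=W^{\mathrm{sing}}$, $Z=W^{\mathrm{sm}}$, and the same level-cover-plus-Lemma~\ref{split} argument for uncountability of $\bigoplus_{W}\mathrm{H}^0(\eta_W,R^1\pi_{*}\mathbb{Q}_l/\mathbb{Z}_l|_{\eta_W})$. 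The small deviations are harmless: your detour through Lemma~\ref{Pirisi-lemma} (hence the restriction $g\geq4$) is avoidable, since the implication only needs $\mathrm{colim}\,\mathrm{coker}(p_2)$ to be a \emph{quotient} of $\bigl(\mathrm{Br}(C)/\pi^{*}\mathrm{Br}(K)\bigr)[l^\infty]$ rather than equal to it, and your closing appeal to Leray and Lemma~\ref{lemma-M_g-connected} is superfluous because the stated conclusion already concerns $\mathrm{Br}(C)/\mathrm{Br}(K)$ rather than $\mathrm{H}^1(K,\mathrm{Pic}^0_{C/K})$.
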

\begin{proof}As in the proof of Theorem \ref{thm2}, applying five lemma to the comparison diagram, we get exact sequence:
$$\xymatrix{0\ar[r]&
\mathrm{Im}(i_{Z,*})\cap\mathrm{Ker}(p_4)\ar[r]&
\frac{\mathrm{Br}(\mathcal{M}_{g,1}\backslash\pi^{-1}Z)[l^\infty]}{p_2^*\mathrm{Br}(\mathcal{M}_g\backslash\overline{Z})+\mathrm{Br}(\mathcal{M}_{g,1}\backslash\pi^{-1}F)}\ar[r]&\mathrm{H}^0(Z,R^1\pi_*\mathbb{Q}_l/\mathbb{Z}_l|_Z)\ar[r]&\frac{\mathrm{Ker}(r_Z)}{\mathrm{Im}(p_4\circ i_{Z,*})}\ar[r]&0.}$$Let us note that
\begin{enumerate}
\item The group $\mathrm{Im}(i_{Z,*})\cap\mathrm{Ker}(p_4)$ is isomorphic to a subgroup of $\mathrm{H}^3(\mathcal{M}_g,\mathbb{Q}_l/\mathbb{Z}_l(1))$, hence $$\mathrm{card}\left(\mathrm{colim}_{F,Z}\left(\mathrm{Im}(i_{Z,*})\cap\mathrm{Ker}(p_4)\right)\right)\leq\aleph_0.$$
Reason: First let us  assume that $F$ is smooth. Consider the comparison diagram of excision sequences
$$\xymatrix{
\mathrm{H}^3(\mathcal{M}_g,\mathbb{Q}_l/\mathbb{Z}_l(1))\ar@{^(->}[r]\ar[d]&\mathrm{H}^3(\mathcal{M}_g\backslash F,\mathbb{Q}_l/\mathbb{Z}_l(1))\ar[d]^{p_4}\ar[r]&\mathrm{H}^0(F,\mathbb{Q}_l/\mathbb{Z}_l(-1))\ar[d]^{\cong}&\\
\mathrm{H}^3(\mathcal{M}_{g,1},\mathbb{Q}_l/\mathbb{Z}_l(1))\ar@{^(->}[r]&\mathrm{H}^3(\mathcal{M}_{g,1}\backslash\pi^{-1}F,\mathbb{Q}_l/\mathbb{Z}_l(1))\ar[r]&\mathrm{H}^0(\pi^{-1}F,\mathbb{Q}_l/\mathbb{Z}_l(-1))
}$$
Diagram chasing tells us $p_4^{-1}(\mathrm{H}^3(\mathcal{M}_{g,1},\mathbb{Q}_l/\mathbb{Z}_l))=\mathrm{H}^3(\mathcal{M}_g,\mathbb{Q}_l/\mathbb{Z}_l(1)),$ therefore we have $$\mathrm{Im}(i_{Z,*})\cap\mathrm{Ker}(p_4)\subseteq\mathrm{Ker}(p_4)=p_4^{-1}(0)\subset
\mathrm{H}^3(\mathcal{M}_g,\mathbb{Q}_l/\mathbb{Z}_l(1))
.$$
Now for general $F$, we take a stratication $F\supset F^1\supset\cdots\supset F^m$ of singularities of $F$, and iteratedly replace $(\mathcal{M}_g,F)$ by $(\mathcal{M}_g-F_i,F-F_i)$ for $i=m,\cdots,1$. By excision sequence and purity, removing high codimension smooth loci does not change cohomologies, so argument works through.
\item For any very general point $x\in\mathcal{M}_g$, there exists a divisor $Z_x$ passing through $x$ such that $R^1\pi_*\mathbb{Q}_l/\mathbb{Z}_l(1)$ admit sections on $Z_x$.
Let us consider the finite covering 
$\mathcal{M}_{g}[l]\to\mathcal{M}_g$
by moduli of genus $g$ curves with level-$l$ structure, then $R^1\pi_{*}\mathbb{Q}_l/\mathbb{Z}_l(1)$ admits a trivial subsheaf $\underline{(\mathbb{Z}/l\mathbb{Z})^{2g}}$. By Lemma \ref{split}, we may take divisors $H\subset\mathcal{M}_{g}[l]$ that maps birationally into $\mathcal{M}_g$. Let $Z=H$ then $\mathrm{H}^0(k(Z),R^1\pi_*\mathbb{Q}_l/\mathbb{Z}_l(1))\supseteq(\mathbb{Z}/l\mathbb{Z})^{2g}$. As $Z$ runs through all the uncountably height one points in $\mathcal{M}_g$ over which the finite cover $\mathcal{M}_{g}[l]\to\mathcal{M}_g$ splits, the colimit $\bigoplus_{Z\in\mathcal{W}^{(1)}}\mathrm{H}^0(\eta_Z,R^1\pi_*\mathbb{Q}_l/\mathbb{Z}_l(1))$ is uncountable.
\end{enumerate}
Now we conclude by taking directed colimit of the exact sequence over all divisors $W\subset\mathcal{M}_g$, where we take $F=W^{\mathrm{sing}}$ and $Z=W\backslash F$. 
\end{proof}

\bibliographystyle{alpha}
\bibliography{references}
\end{document}